\renewcommand{\>}{\rangle}
\newcommand{\Sp}{{\rm Sp}}
\newcommand{\Hom}{{\rm Hom}}
\renewcommand{\Im}{\mathrm{Im}}
\newcommand{\SU}{{\rm SU}}
\newcommand{\SO}{{\rm SO}}
\newcommand{\Id}{{\rm Id}}
\newcommand{\Hh}{\mathcal H}
\newcommand{\ov}{\overline}
\renewcommand{\b}{\beta}
\renewcommand{\k}{\kappa}
\renewcommand{\o}{\omega}
\newcommand{\G}{\Gamma}
\newcommand{\C}{\mathbb C}
\newcommand{\D}{\mathbb D}
\newcommand{\Dd}{\mathcal D}
\renewcommand{\H}{\mathbb H}
\newcommand{\R}{\mathbb R}
\newcommand{\Z}{\mathbb Z}
\newcommand{\calH}{\mathcal H}
\newcommand{\calX}{\mathcal X}
\newcommand{\calC}{\mathcal C}
\newcommand{\Xx}{\mathcal X}
\renewcommand{\frak}{\mathfrak}
\newcommand{\frl}{\mathfrak l}
\newcommand{\frgl}{\mathfrak {gl}}
\newcommand{\frsu}{\mathfrak {su}}
\newcommand{\frg}{\mathfrak g}
\newcommand{\frsp}{\mathfrak {sp}}
\newcommand{\frsl}{\mathfrak {sl}}
\newcommand{\frh}{\mathfrak {h}}
\newcommand{\fre}{\mathfrak{e}}
\newcommand{\frso}{\mathfrak{so}}
\newcommand{\rk}{{\rm rk}}
\newcommand{\rad}{{\rm rad}}
\newcommand{\Yy}{\mathcal Y}
\numberwithin{equation}{section}
\numberwithin{equation}{section}
\newcommand{\bqn}{\begin{equation*}}
\newcommand{\eqn}{\end{equation*}}
\newcommand{\bq}{\begin{equation}}
\newcommand{\eq}{\end{equation}}
\newcommand{\ba}{\begin{aligned}}
\newcommand{\ea}{\end{aligned}}
\newcommand{\be}{\begin{enumerate}}
\newcommand{\ee}{\end{enumerate}}
\newcommand{\bsm}{\left[\begin{smallmatrix}}
\newcommand{\esm}{\end{smallmatrix}\right]}                   
\newcommand{\bpm}{\begin{bmatrix}}
\newcommand{\epm}{\end{bmatrix}}
\newcommand{\barr}{\begin{displaymath}\begin{array}{cccc}}
\newcommand{\earr}{\end{array}\end{displaymath}}
\newcommand{\barrl}{\begin{displaymath}\begin{array}{lcl}}
\newcommand{\earrl}{\end{array}\end{displaymath}}
\newcommand{\barl}{\begin{displaymath}\begin{array}{l}}
\newcommand{\earl}{\end{array}\end{displaymath}}
\newcommand{\bxym}{ \begin{displaymath}\xymatrix }
\newcommand{\exym}{\end{displaymath}}
\theoremstyle{plain}
\newtheorem{thm}{Theorem}[section]
\newtheorem{lem}[thm]{Lemma}
\newtheorem{prop}[thm]{Proposition}
\newtheorem{cor}[thm]{Corollary}
\newtheorem*{teo*}{Theorem}
\theoremstyle{definition}
\newtheorem{defn}[thm]{Definition}
\newtheorem{ex}[thm]{Example}
\newcommand{\thismonth}{\ifcase\month 
  \or January\or February\or March\or April\or May\or June%
  \or July\or August\or September\or October\or November%
  \or December\fi}
\begin{document}
\title{Classification of tight homomorphisms }
\author{O. Hamlet}
\address{Department of Mathematics, Chalmers University of Technology
and the University of Gothenburg, 412 96 G\"OTEBORG, SWEDEN}
\email{hamlet@chalmers.se}
\author{M. B. Pozzetti}
\address{Department Mathematik, ETH Z\"urich, 
R\"amistrasse 101, CH-8092 Z\"urich, Switzerland}
\email{beatrice.pozzetti@math.ethz.ch}

\keywords{Hermitian symmetric spaces, tight embeddings, maximal representations}

\date{\today}
\thanks{O.H. author warmly acknowledges the hospitality of FIM institute in Zurich where this work was completed, B.P. was partially supported by the Swiss National Science Foundation project 200020-144373. }

\begin{abstract}
A totally geodesic map $f:\calX_1\to\calX_2$ between Hermitian symmetric spaces is tight if its image contains geodesic triangles of maximal area. Tight maps were first introduced in \cite{tight}, and were classified in \cite{Ham1,Ham2,Ham3} in the case of irreducible domain. We complete the classification by analyzing also maps from reducible domains. This has applications to the study of maximal representations and of the Hitchin component for representations in the groups $\Sp(2n,\R)$.
\end{abstract}
\maketitle

\section{Introduction}\label{sec:intro}
When $\Xx$ is a Hermitian symmetric space it is possible to use the K\"ahler form $\o$ of $\Xx$ to define an invariant of triples of points  $(x_0,x_1,x_2)$ in $\Xx$: the area, measured according to $\o$, of any totally geodesic triangle having $x_0,x_1,x_2$ as vertices. This invariant, that will be denoted by $\beta_\Xx$ in what follows, was studied by Domic and Toledo \cite{DT} in the case of classical domains and by Clerc and \O rsted \cite{CO} in the general case, who show that, denoting by $\rk(\Xx)$ the real rank of the symmetric space, it holds $|\beta_\Xx|<\pi\rk(\Xx)$
and the latter inequality is sharp. Here and in the following the Riemannian metric on a Hermitian symmetric space will always be normalized in such a way that its minimal holomorphic sectional curvature is equal to -1.

If now $\Xx_1$ and $\Xx_2$ are two Hermitian symmetric spaces, it becomes an interesting problem to determine if there exists a totally geodesic map $f:\Xx_1\to\Xx_2$ with the property that 
$$\sup_{(x_0,x_1,x_2)\in \Im(f)^3}|\beta_{\Xx_2}(x_0,x_1,x_2)|=\sup_{(y_0,y_1,y_2)\in \Xx_2^3}|\beta_{\Xx_2}(y_0,y_1,y_2)|$$
or if there is some geometric obstruction that excludes that the image of $f$ encompasses triangles of all the possible two dimensional area.

The class of maps for which the equality holds, the so called \emph{tight} maps, was first singled out by Burger, Iozzi and Wienhard in \cite{tight}, where a number of interesting geometric properties of those maps were established: we mention here that the image of $\Xx_1$ is a symmetric space with compact centralizer, and $f$ extends to a continuous map between the Shilov boundaries of the symmetric spaces. If we denote by $\Dd$ the Harish Chandra realization of a Hermitian symmetric space $\Xx$ as a bounded domain in a complex affine space, and by $G$ its isometry group, the Shilov boundary of $\Xx$ is the unique closed $G$-orbit in the topological boundary of $\Dd$.

The form $\o$ only measures area in the complex directions. Therefore if $f:\Xx\to\Yy$ is a totally real subspace then the function $\beta_\Yy$ vanishes on the image of $f$. More generally, given the close relation of tight maps with properties of the K\"ahler form, one might expect that tight maps need to be holomorphic. This is not always the case: it was shown in \cite{tight} that the totally geodesic map equivariant with the irreducible representation of $\frsl(2,\R)$ in $\frsp(2n,\R)$ is tight but not holomorphic. However the first author proved in his thesis \cite{Ham2,Ham3} that this is the only simple exception: if $\Xx$ is irreducible and not the Poincar\'e disc, and $f:\Xx\to\Yy$ is a tight map, then it is holomorphic. The first main result of the paper generalizes this statement to reducible domains:
\begin{thm}\label{thm:1}
 Let $\frg, \frh$ be Hermitian Lie algebras, assume that no simple factor of $\frg$ is isomorphic to $\frsu(1,1)$ and $\rho:\frg\to\frh$ is a homomorphism whose associated totally geodesic map $f$ is tight and positive. Then $f$ is holomorphic.
\end{thm}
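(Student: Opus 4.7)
The plan is to reduce Theorem~\ref{thm:1} to the irreducible case treated in \cite{Ham2,Ham3}. Decompose $\frg = \bigoplus_{i=1}^k \frg_i$ into simple Hermitian ideals; by hypothesis none is isomorphic to $\frsu(1,1)$. The associated symmetric space then factors as $\calX = \prod_i \calX_i$, and the complex structure splits as $J = \bigoplus_i J_i$ on $T\calX = \bigoplus_i T\calX_i$. Let $\rho_i := \rho|_{\frg_i} : \frg_i \to \frh$ be the restrictions, with associated totally geodesic maps $f_i : \calX_i \to \Yy$. Since $df$ is $J$-linear if and only if its restriction to each $T\calX_i$ is $J$-linear, proving that $f$ is holomorphic reduces to proving that each $f_i$ is holomorphic.

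The heart of the argument is to show that every nonzero $\rho_i$ is itself a tight and positive homomorphism into a suitable Hermitian Lie subalgebra $\frh_i \subset \frh$. Since the $\frg_i$ are commuting ideals, the subalgebras $\rho_i(\frg_i)$ pairwise commute in $\frh$, each being semisimple and Hermitian. I would invoke the bounded-cohomological characterization of tightness due to Burger, Iozzi and Wienhard: tightness is equivalent to $\|\rho^*\kappa^b_\Yy\|_\infty = \|\kappa^b_\Yy\|_\infty = \pi\,\rk(\Yy)$. Because $H^2_{cb}(\calX)$ splits as $\bigoplus_i H^2_{cb}(\calX_i)$ and each $\calX_i$ is irreducible, one may write $\rho^*\kappa^b_\Yy = \sum_i \lambda_i \kappa^b_{\calX_i}$; positivity of $\rho$ forces each $\lambda_i \geq 0$, and the sup-norm identity becomes $\sum_i \lambda_i \rk(\calX_i) = \rk(\Yy)$. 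Using the commutativity of the images, I would construct commuting Hermitian subalgebras $\frh_i \subset \frh$ containing $\rho_i(\frg_i)$, whose associated totally geodesic subspaces $\Yy_i \subset \Yy$ embed holomorphically. Combining the general bound $\lambda_i \rk(\calX_i) \leq \rk(\frh_i)$ (tightness-type upper bound applied to $\rho_i$ viewed in $\frh_i$) with the obvious inequality $\sum_i \rk(\frh_i) \leq \rk(\frh)$, the above equality forces both saturations: $\sum_i \rk(\frh_i) = \rk(\frh)$ and $\lambda_i \rk(\calX_i) = \rk(\frh_i)$ for every $i$. The latter is precisely tightness of $\rho_i : \frg_i \to \frh_i$, and positivity passes to the restriction.

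Once each $\rho_i$ is tight and positive into $\frh_i$, the irreducible classification of \cite{Ham2,Ham3} applies, since $\frg_i$ is simple Hermitian and not isomorphic to $\frsu(1,1)$: each $f_i : \calX_i \to \Yy_i$ is holomorphic. Because $\Yy_i \hookrightarrow \Yy$ is a holomorphic inclusion by construction, $f_i : \calX_i \to \Yy$ is holomorphic as well, and the splitting $J = \bigoplus_i J_i$ then yields $df \circ J = J_\Yy \circ df$ on all of $T\calX$, as required.

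The principal obstacle is the descent step: producing the commuting Hermitian subalgebras $\frh_i$ with holomorphic inclusion in $\frh$ and verifying the rank identity $\rk(\frh) = \sum_i \rk(\frh_i)$. Positivity is essential here, since without it mixed-sign coefficients $\lambda_i$ could arise from anti-holomorphic embeddings of individual factors, producing tight but non-holomorphic examples that are not covered by the conclusion. The exclusion of $\frsu(1,1)$-factors, by contrast, is invoked only at the last step, in order to apply the irreducible classification without having to contend with the non-holomorphic tight exception $\frsl(2,\R) \to \frsp(2n,\R)$ of \cite{tight}.
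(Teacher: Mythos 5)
Your overall skeleton (split $\frg$ into simple factors, check $J$-linearity factor by factor, finish with the irreducible classification of \cite{Ham2,Ham3}) matches the shape of the paper's argument, but the step you yourself identify as the heart of the proof is missing, and it is precisely the hard content. You assert that one can ``construct commuting Hermitian subalgebras $\frh_i\subset\frh$ containing $\rho_i(\frg_i)$, whose associated subspaces embed holomorphically'' and with $\sum_i\rk(\frh_i)\le\rk(\frh)$, so that each restriction $\rho_i$ becomes tight into $\frh_i$. No construction is given, and none is easy: this statement is essentially the splitting/hull description of Theorem \ref{thm:2}, which the paper proves \emph{after} Theorem \ref{thm:1}, using the branching results of Section \ref{sec:branching} (Propositions \ref{prop:2.1}, \ref{prop:2.2}, \ref{prop:splitting2}, \ref{prop:splitting3}), which in turn rest on a careful analysis of invariant nondegenerate subspaces for the classical targets $\frsu(m,n)$ and $\frso^*(2p)$ together with the low-rank classification of Proposition \ref{prop:hamlowrk}. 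Taking it for granted makes the argument circular at worst and incomplete at best. Your rank bookkeeping also has a gap: for a holomorphically embedded subalgebra $\iota_i:\frh_i\to\frh$ one does \emph{not} in general have $\iota_i^*\kappa^b_{\frh}=\kappa^b_{\frh_i}$ (e.g.\ $\frso^*(2n)\subset\frsu(n,n)$ pulls back the K\"ahler class with a factor $2$), so the inequality $\lambda_i\rk(\calX_i)\le\rk(\frh_i)$ and the forced saturation do not follow as stated; you would need the inclusions to be isometric (or tight and holomorphic), which is again part of what must be constructed. Finally, Theorem \ref{thm:1} makes no exclusion of exceptional targets: for $\frh=\fre_{6(-14)}$ or $\fre_{7(-25)}$ there is no linear representation to branch on, and your scheme says nothing about them, whereas the paper treats them separately by rank arguments and Lemma \ref{lem:e}.

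For contrast, the paper avoids having to make each full restriction $\rho|_{\frg_s}$ tight into a subalgebra. Instead it composes $\rho$ with small tube-type test subalgebras of $\frg$: a tight holomorphic $j_s:\frl_s\to\frg_s$ with $\frl_s\in\{\frsu(1,2),\ \frsp(4,\R),\ \frsp(4,\R)\oplus\frsu(1,1)\}$, extended by diagonal or antidiagonal discs on the other factors (Lemma \ref{lem:discs}) so that the composition stays tight into all of $\frh$. The branching propositions and the low-rank results (Proposition \ref{prop:hamlowrk}, Corollaries \ref{cor:ham} and \ref{cor:so*}) then show this composition is holomorphic in the $\frl_s$-direction, and holomorphicity is promoted from $j_s(\frl_s)$ to the whole simple factor $\frg_s$ by the adjoint-irreducibility argument of \cite[Lemma 5.4]{Ham2}. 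If you want to salvage your route, you would have to prove your descent step for classical $\frh$ (essentially re-proving Propositions \ref{prop:splitting2} and \ref{prop:splitting3}) and add a separate treatment of the exceptional targets.
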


In their study of tight homomorphisms between Hermitian Lie groups, Burger, Iozzi and Wienhard introduced the Hermitian hull of a subspace $\Xx$ of the Hermitian symmetric space $\Yy$. This is the smallest holomorphically embedded Hermitian symmetric subspace $\calH(\Xx)$ of $\Yy$ containing $\Xx$. Similarly, whenever $\rho:\frg\to\frh$ is a homomorphism, we will denote by $\calH(\rho)$ the smallest holomorphically embedded Hermitian subalgebra of $\frh$ containing $\rho(\frg)$. If $\Xx$ is the symmetric space associated to $\frg$, and $f$ is the totally geodesic map that is $\rho$-equivariant, the Lie algebra $\calH(\rho)$ corresponds, under the natural correspondence between subalgebras and symmetric subspaces, to $\calH(f(\Xx))$. Our second result is the explicit description the Hermitian hull of a tight map (a Lie algebra homomorphism will from now on be said to be tight, positive, holomorphic, if the corresponding totally geodesic map is):
\begin{thm}\label{thm:1.2}\label{thm:2}
 Let $\frg=\frsu(1,1)^l\oplus \frg_1\oplus\ldots\frg_k$ be a Hermitian Lie algebra, with $\frg_i$ not isomorphic to $\frsu(1,1)$ and of noncompact type. Let $\rho:\frg\to \frh$ be an injective, positive, tight homomorphism, and assume further that $\frh$ has no simple factor isomorphic to $\fre_{7(-25)}$. Then 
 \begin{enumerate}
  \item  the Hermitian hull $\calH(\rho)$ is of the form
 $$\mathcal H(\rho)=\bigoplus_{j=1}^l\bigoplus_{l=1}^{n_j}\frsp(2m_{jl},\R)\oplus \frg_1\oplus\ldots\oplus\frg_k,$$
\item the inclusion $\calH(\rho)\to \frh$ is holomorphic and tight,
\item the homomorphism $\rho:\frg\to\calH(\rho)$ is a direct sum of irreducible representations on the $\frsu(1,1)$ factors and identifications on the factors $\frg_i$.
 \end{enumerate}
\end{thm}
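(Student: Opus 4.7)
The plan is to analyze $\rho$ factorwise, combining Theorem~\ref{thm:1} with the classification of tight positive injections from irreducible domains obtained in \cite{Ham1,Ham2,Ham3}. First I establish part~(2) directly: the inclusion $\calH(\rho) \hookrightarrow \frh$ is holomorphic by definition of the hull, and tight because the geodesic triangles in $\rho(\calX)$ witnessing the maximality of $\beta_\frh$ already lie in $\calH(\rho)$. This lets me replace $\frh$ by $\calH(\rho)$ and view $\rho$ as a tight positive injection into its own hull. Writing $\calH(\rho) = \bigoplus_{a} \mathfrak{s}_a$ as a direct sum of simple Hermitian factors, I observe that no $\mathfrak{s}_a$ can be isomorphic to $\fre_{7(-25)}$: since $\frh$ has no such simple factor and $\fre_{7(-25)}$ cannot occur as a proper Hermitian subalgebra of any different simple Hermitian Lie algebra, this case is excluded.

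For the non-$\frsu(1,1)$ part of $\frg$, the restriction $\rho|_{\frg_1 \oplus \cdots \oplus \frg_k}$ is tight and positive by the K\"ahler class decomposition argument of \cite{tight}, and hence holomorphic by Theorem~\ref{thm:1}. Projecting this restriction to each $\mathfrak{s}_a$ yields an injective tight positive holomorphic homomorphism between simple Hermitian Lie algebras whose target is not of type $\fre_{7(-25)}$; by \cite{Ham2,Ham3} every nonzero such projection must be an identification, forcing $\mathfrak{s}_a \cong \frg_i$. A rank count---no simple Hermitian algebra distinct from $\frsu(1,1)$ contains two commuting copies of itself as Lie subalgebras---then shows that each $\frg_i$ projects non-trivially onto a unique $\mathfrak{s}_a$, which I identify with $\frg_i$ itself.

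For the $\frsu(1,1)_j$ factors, each nonzero projection $\pi_a \circ \rho|_{\frsu(1,1)_j}$ to a remaining $\mathfrak{s}_a$ is a tight positive homomorphism from $\frsu(1,1)$. The irreducible-domain classification of \cite{Ham2,Ham3} leaves as possibilities a holomorphic diagonal embedding into a tube-type $\mathfrak{s}_a$, or the $2m$-dimensional irreducible representation into $\frsp(2m,\R)$. In the first case, the Hermitian hull inside $\mathfrak{s}_a$ of the diagonal image is only $\frsu(1,1) = \frsp(2,\R)$, so by minimality of $\calH(\rho)$ one must have $\mathfrak{s}_a = \frsp(2,\R)$; in the second, $\mathfrak{s}_a = \frsp(2m,\R)$ and the hull of the image fills $\mathfrak{s}_a$. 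In either case $\mathfrak{s}_a \cong \frsp(2m_{jl}, \R)$ and the projection is the corresponding irreducible representation. Combining with centralizer arguments (the centralizer of $\rho(\frg_i)$ in $\mathfrak{s}_a \cong \frg_i$ is central, hence trivial; the centralizer of an irreducibly embedded $\frsu(1,1)$ in $\frsp(2m,\R)$ is finite) rules out mixing of distinct simple factors of $\frg$ within a single $\mathfrak{s}_a$, yielding the decomposition~(1) and~(3).

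The principal obstacle is the final bookkeeping step: a priori, a single $\mathfrak{s}_a$ could receive nonzero images from several simple factors of $\frg$, and one must show by a combination of minimality of the hull, rank considerations, and the small centralizers of the distinguished $\frsu(1,1)$-subalgebras of symplectic groups that each $\mathfrak{s}_a$ receives a nonzero image from exactly one simple factor of $\frg$ and takes precisely the form predicted. Each of these ingredients is insufficient on its own, but together they pin down the direct-sum structure.
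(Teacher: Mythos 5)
Your plan founders on several points where the paper needs (and supplies) real machinery. First, the tightness claims you make for restrictions are false: the restriction of a tight homomorphism to a partial sum of source factors is in general \emph{not} tight (already the identity of $\frg_1\oplus\frg_2$ restricted to $\frg_1$ fails, since $\|\iota^*(\k^b_{G_1}+\k^b_{G_2})\|=\pi\rk(G_1)<\pi\rk(G_1\times G_2)$). So neither $\rho|_{\frg_1\oplus\cdots\oplus\frg_k}$ nor the projections $\pi_a\circ\rho|_{\frsu(1,1)_j}$ are known to be tight, and you cannot invoke Theorem \ref{thm:1}, the low-rank classification, or \cite{tight} for them as you do; the paper avoids this by never restricting, instead composing $\rho$ with tight sums of (anti)diagonal discs on the complementary factors (Lemma \ref{lem:discs}) and using Proposition \ref{prop:tighteasy}(2). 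Second, your classification step is wrong: a tight, positive, holomorphic injection between simple Hermitian algebras need not be an identification --- $\frsp(2n,\R)\to\frsu(n,n)$, $\frsu(m,m)\to\frsp(4m,\R)$, $\frso^*(4m)\to\frsu(2m,2m)$ and the spin representations are all tight holomorphic embeddings, and they are exactly what Section \ref{sec:lists} catalogues. What is true (and what the paper uses) is that the Hermitian hull of a holomorphic injection is its domain; but that only helps after one knows each simple factor of the hull is the hull of the image of a \emph{single} source factor. Third, your exclusion of $\fre_{7(-25)}$ hull factors rests on the false assertion that $\fre_{7(-25)}$ cannot be a proper holomorphically embedded subalgebra of another simple Hermitian algebra (Satake's embedding $\fre_{7(-25)}\hookrightarrow\frsp(56,\R)$ is holomorphic).

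Most importantly, the step you yourself flag as the ``principal obstacle'' --- that no simple factor of the hull receives nontrivial contributions from two different simple factors of $\frg$ --- is not a bookkeeping matter that minimality, rank counts and centralizers settle; it is the technical core of the paper. A priori an irreducible tensor-product representation $\rho^1\boxtimes\rho^2$ of $\frg_1\oplus\frg_2$ into a single $\frsu(m,n)$ mixes the factors, and nothing in your sketch rules it out. The paper excludes this by first producing, for tight maps into $\frsu(m,n)$ and $\frso^*(2p)$, a decomposition of the associated linear (resp.\ quaternionic) representation into nondegenerate invariant blocks compatible with the form (Propositions \ref{prop:2.1}, \ref{prop:2.2}, where the tightness hypothesis is used to kill isotropic blocks), and then showing via diagonal discs and the rank-two classification (Proposition \ref{prop:hamlowrk}, Lemma \ref{lem:1}) that each irreducible block factors through one source factor (Propositions \ref{prop:splitting2}, \ref{prop:splitting3}); the remaining targets $\frsp(2n,\R)$, $\frso(2,n)$ are reduced to $\frsu(n,n)$ by tight holomorphic embeddings and Lemma \ref{lem:hulhol}, and $\fre_{6(-14)}$ is handled by Lemma \ref{lem:e}. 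Your proposal would need to reconstruct this branching argument (or an equivalent substitute); as written it has a genuine gap there, in addition to the incorrect tightness and classification claims above.
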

In \cite{Ham1} the first author described all possible tight holomorphic homomorphisms between Hermitian Lie algebras. Combining that description with the result of Theorem \ref{thm:2} we obtain, in the last section, complete lists of all the tight embeddings between Hermitian Lie algebras: 
\begin{thm}\label{thm:3}
 The lists in Section \ref{sec:lists} provide an exhaustive list of all tight embeddings between Hermitian Lie algebras
 with the possible exception of some homomorphisms with values in $\fre_{7(-25)}$.
\end{thm}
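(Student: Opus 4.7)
The plan is to combine Theorem \ref{thm:2} with the classification of tight holomorphic homomorphisms from \cite{Ham1}. Given any tight embedding $\rho:\frg\to\frh$ with $\frh$ free of $\fre_{7(-25)}$-factors, I would first observe that, up to post-composing with an anti-holomorphic isometry of the target symmetric space (which preserves tightness but reverses the sign of $\beta_{\calX_2}$), we may assume $\rho$ is positive. Writing $\frg=\frsu(1,1)^l\oplus\frg_1\oplus\ldots\oplus\frg_k$ with each $\frg_i$ simple and non-isomorphic to $\frsu(1,1)$, Theorem \ref{thm:2} produces a factorization
$$
\frg\xrightarrow{\rho_1}\calH(\rho)\xrightarrow{\iota}\frh,
$$
where $\calH(\rho)=\bigoplus_{j=1}^{l}\bigoplus_{s=1}^{n_j}\frsp(2m_{js},\R)\oplus\frg_1\oplus\ldots\oplus\frg_k$, the inclusion $\iota$ is tight and holomorphic, and $\rho_1$ is a direct sum of irreducible representations on the $\frsu(1,1)$-summands together with identifications on each $\frg_i$.

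The first enumeration task is then to list all possible $\iota$. Since $\iota$ is a tight holomorphic embedding of a Hermitian Lie algebra into $\frh$, the classification from \cite{Ham1} applies and, decomposing $\iota$ simple factor by simple factor of the source, yields a finite number of possibilities for each simple factor of $\frh$. In particular this pins down the integers $l$, $k$, $n_j$, $m_{js}$ and the embedding of each $\frsp(2m_{js},\R)$-summand and each $\frg_i$ into $\frh$. The second task is, for each $\frsu(1,1)$-factor, to select those direct sums of irreducible representations $\frsu(1,1)\to\bigoplus_{s=1}^{n_j}\frsp(2m_{js},\R)$ which are tight and positive: an irreducible $\frsl(2,\R)$-subalgebra of $\frsp(2m,\R)$ corresponds to an odd partition of $2m$, its Toledo invariant is explicitly computable as a sum of squares of the part sizes, and tightness amounts to saturating the Clerc--{\O}rsted bound of \cite{CO}. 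This computation was already carried out in \cite{tight,Ham2} in the case $n_j=1$, and extends without modification to direct sums of symplectic factors because the Toledo invariant is additive over direct sums of holomorphic inclusions.

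Assembling the two enumerations and grouping the resulting compositions $\iota\circ\rho_1$ by target $\frh$ yields the lists in Section \ref{sec:lists}. The main obstacle is organizational rather than conceptual: one must traverse all pairs consisting of a tight holomorphic embedding $\iota$ from the \cite{Ham1} list and a compatible tight positive $\rho_1$ on each $\frsu(1,1)$-block, present the resulting embeddings in a uniform table per simple factor of $\frh$, and check that no two entries describe the same conjugacy class of embedding. The $\fre_{7(-25)}$-exclusion in the statement is inherited verbatim from the hypothesis of Theorem \ref{thm:2}, since it is precisely the absence of such a target factor that guarantees the Hermitian-hull factorization on which the whole argument rests.
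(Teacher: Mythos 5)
Your strategy is the paper's own: factor through the Hermitian hull via Theorem \ref{thm:2} and combine with the classification of tight holomorphic maps in \cite{Ham1} and of tight representations of $\frsu(1,1)$. But your very first reduction, to the positive case, does not work as stated. Positivity is a condition on each coefficient separately in $\rho^*\k^b_H=\sum_i\alpha_i\k^b_{G_i}$, one coefficient per simple factor of the source, whereas post-composing $\rho$ with an antiholomorphic isometry of the target replaces all $\alpha_i$ by $-\alpha_i$ simultaneously. A tight homomorphism of, say, $\frsu(1,1)^2$ with mixed signs (holomorphic behaviour on one factor, antiholomorphic on the other) can never be made positive this way, so Theorem \ref{thm:2} would simply not apply to it and such maps would be missing from your enumeration. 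The repair is to precompose factor by factor: every simple Hermitian factor admits an antiholomorphic automorphism (for $\frsu(1,1)$ it is the map $a$ used in Lemma \ref{lem:discs}; in general, conjugation in the Harish-Chandra realization), and precomposing on exactly those factors with $\alpha_i<0$ yields a positive tight homomorphism with the same image, so the lists of embeddings are unaffected. This is how the paper absorbs signs (antidiagonal discs in Lemma \ref{lem:discs}, and the pairs of spin representations $\rho_r,\rho'_r$ in the tables).

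A second, smaller error concerns the $\frsu(1,1)$ blocks: the $2k$-dimensional irreducible representation into $\frsp(2k,\R)$ pulls back the K\"ahler class with coefficient $\pm k$, not $k^2$; for a direct sum with parts $k_1,\dots,k_r$, $\sum_i k_i=m$, the pullback coefficient is $\sum_i\varepsilon_i k_i$, and tightness together with positivity forces all $\varepsilon_i=+1$. With your sum-of-squares formula, saturating the Clerc--{\O}rsted bound would force every $k_i=1$ and would wrongly exclude the non-holomorphic tight maps $\rho_n$, which are precisely the interesting entries. Since you also invoke the correct classification (\cite[Lemma 9.5]{tight}, \cite[Theorem 6.1]{Ham2}, and \cite[Corollary 9.6]{tight} for the hull), the final enumeration you would assemble is the right one, but the computation as written would not reproduce it.
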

The main difference between the lists in our last section and the ones in \cite{Ham1} is that, instead of referring to root subsystems, we give an elementary description of all the homomorphisms involved, that can be directly used in applications.

The necessity of removing from the main theorem of \cite{Ham2} the irreducibility hypothesis on the domain is not just an academic exercise. One of the most notable applications of tight maps is to the study of maximal representations. These are interesting and well studied lattice homomorphisms.
To be more precise, if $\G$ denotes 
a lattice in a Hermitian Lie group $G$, $H$ another Hermitian Lie group, a homomorphism $\rho:\G\to H$ is a \emph{maximal representation} if it satisfies some cohomological hypothesis that should be thought of as a generalization of a property characterizing homomorphisms associated to points in the Teichm\"uller component \cite{BIW,BGPG}. It is shown in \cite{BIW} that if $\rho:\G\to H$ is a maximal representation, then the Zariski closure $L$ of $\rho(\G)$ is a reductive Hermitian Lie group whose associated symmetric space $\Yy$ is tightly embedded in the symmetric space associated to $H$. However, in general $\Yy$ is not irreducible, it is therefore important to understand tight homomorphisms from reducible domains.   

As a corollary of our work we get restrictions on the subgroups of $H$ that can arise as Zariski closure of the image $\rho(\G)$ of a maximal representation. This can, in turn, be used to deduce properties of maximal representations: for example the description of tightly embedded subalgebras of $\frsu(m,n)$ carried out in the last section of this paper is used in \cite{Poz} to deduce rigidity results for maximal representations of complex hyperbolic lattices in $\SU(m,n)$.

When $G$ is $\Sp(2m,\R)$, that is the only family of Hermitian Lie groups that are also split Lie groups, the Hitchin component of $\Hom(\G_g,G)$ consists only of maximal representations. This implies that if $\rho:\G_g\to \Sp(2m,\R)$ is a representation in the Hitchin component, and $H={\ov{\rho(\G)}}\phantom!^Z$ is the Zariski closure of the image, then the Lie algebra $\frh$ belongs to the list of possible tightly embedded subalgebras of $\frsp(2m,\R)$ determined in Section \ref{sec:lists}. 
Homomorphisms $\rho:\G_g\to\Sp(4,\R)$ that belong to the Hitchin component parametrize properly convex foliated projective contact structures on the unit tangent bundle of a surface \cite{GW}. In this case our result imposes strong restrictions on the possible Zariski closure of the image of $\rho$ (that can only be $\SU(1,1)$, $\SU(1,1)^2$ or $\Sp(4,\R)$). This restricts the possible geometric structures compatible with convex foliated real projective structures.

\subsection*{Structure of the paper}
In Section \ref{sec:Preliminaries} we recall the results we will need about linear representations of Lie algebras, complex and quaternionic linear algebra and tight homomorphisms.
In Section \ref{sec:branching} we describe some branching arguments: if the target is either $\frsu(m,n)$ or $\frso^*(2p)$, then one can use some orthogonal decomposition of $\C^{m+n}$ (resp. $\H^p$) into irreducible modules to split a tight homomorphism into a direct sum of easier homomorphisms that can be well understood.
In Section \ref{sec:proofs} we deduce Theorems \ref{thm:1} and \ref{thm:2} from the branching argument of Section \ref{sec:branching}. In Section \ref{sec:lists} we combine Theorem \ref{thm:1} and \ref{thm:2} with the classification of tight holomorphic homomorphisms in \cite{Ham1} to give an explicit description of all tight homomorphisms with codomain different from $\fre_{7(-25)}$.  
\section{Preliminaries}\label{sec:Preliminaries}

\subsection{(Linear) representations of Lie algebras}
Let $\rho:\frg\to\frsu(m,n)$ be a homomorphism, and let us consider the associated linear representation $\ov \rho$ of $\frg$ on $\C^{m+n}$ that is obtained by composing $\rho$ with the standard inclusion $\frsu(m,n)\to \frgl(\C^{m+n})$. A subspace $V$ of $\C^{m+n}$ is $\ov\rho(\frg)$\emph{-invariant} if $\ov\rho(X)v\in V$ for each $v$ in $V$ and $X$ in $\frg$. The linear representation $\ov\rho$ is \emph{irreducible} if the only $\ov\rho(\frg)$-invariant subspaces are $\{0\}$ and $\C^{m+n}$.

If $\frg$ is Hermitian, it is in particular a reductive Lie algebra, and hence it follows from Weyl's theorem \cite[Theorem 6.3]{Humphreys} that every representation of $\frg$ can be decomposed, uniquely up to conjugation in $\frgl(\C^{m+n})$, as a direct sum of irreducible representations. This means that there exists a $\ov\rho(\frg)$-invariant decomposition $\C^{m+n}=V_1\oplus\ldots \oplus V_k$ and homomorphisms $\rho_i:\frg\to\frgl(V_i)$ such that  $\ov\rho=i\circ \oplus \rho_i$ where $i:\oplus \frgl(V_i)\to \frgl(\C^{m+n})$ is the inclusion induced by the splitting.


It is well known that it is possible to classify irreducible linear representations of simple Lie algebras in terms of the associated weights. However, it is important to keep in mind that two different homomorphism $\rho_i:\frg\to\frsu(m,n)$ can be conjugate in $\frgl(\C^{m+n})$ without being conjugate in $\frsu(m,n)$, and even worse, the notion of tightness is not well defined for conjugacy classes of linear representations (see \cite[Section 4.3]{Ham2} for more details).

In the case of reducible Lie algebras $\frl=\frl_1\oplus\frl_2$, if $\rho^i:\frl_i\to \frgl(V_i)$ are representations, the tensor product representation $\rho^1\boxtimes \rho^2:\frl_1\oplus\frl_2\to \frgl(V_1\otimes V_2)$ 
is defined by the expression 
$$\rho^1\boxtimes \rho^2(l_1,l_2)(v_1\otimes v_2)=\rho^1(l_1)v_1\otimes v_2+v_2\otimes \rho^2(l^2)v_2.$$
It is well known  that every irreducible linear representation $\ov\rho:\frl_1\oplus\frl_2\to \frgl(\C^{m+n})$ can be uniquely expressed as $\rho^1\boxtimes \rho^2$ for some irreducible representations $\rho^i:\frl_i\to \frgl(V_i)$. Moreover we have:
\begin{lem}\label{lem:1}
Let $\rho_i:\frl_i\to\frgl(V_i)$ be representations. If the decomposition of $\rho_i$ in irreducible representations is $\rho_i=\oplus_{j=1}^{n_i}\rho_i^j$, then the decomposition of $\rho_1\boxtimes\rho_2$ in irreducible representations is 
$\bigoplus_{j=1}^{n_1}\bigoplus_{l=1}^{n_2}\rho_1^j\boxtimes\rho_2^l.$
\end{lem}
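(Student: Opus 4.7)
The plan is to combine a purely linear-algebraic distributivity of $\otimes$ over $\oplus$ with the fact, recalled in the paragraph preceding the lemma, that external tensor products of irreducibles are irreducible.

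First I would write down the canonical decomposition
$$V_1\otimes V_2=\Bigl(\bigoplus_{j=1}^{n_1}V_1^j\Bigr)\otimes\Bigl(\bigoplus_{l=1}^{n_2}V_2^l\Bigr)=\bigoplus_{j=1}^{n_1}\bigoplus_{l=1}^{n_2}V_1^j\otimes V_2^l.$$
From the defining formula $(\rho_1\boxtimes\rho_2)(x_1,x_2)(v_1\otimes v_2)=\rho_1(x_1)v_1\otimes v_2+v_1\otimes\rho_2(x_2)v_2$, together with the $\frl_i$-invariance of each $V_i^j$, each summand $V_1^j\otimes V_2^l$ is invariant under the whole $\frl_1\oplus\frl_2$, and the restriction of $\rho_1\boxtimes\rho_2$ to it is by construction the external tensor product $\rho_1^j\boxtimes\rho_2^l$.

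It then remains to verify that each $\rho_1^j\boxtimes\rho_2^l$ is irreducible. This is exactly one direction of the classical classification quoted immediately before the lemma: if $\rho_i^j$ are irreducible representations of $\frl_i$, then $\rho_1^j\boxtimes\rho_2^l$ is an irreducible representation of $\frl_1\oplus\frl_2$. A self-contained argument uses the Jacobson density theorem: over $\C$, the associative subalgebra of $\mathrm{End}(V_i^j)$ generated by $\rho_i^j(\frl_i)$ is all of $\mathrm{End}(V_i^j)$, so the associative algebra generated by $(\rho_1^j\boxtimes\rho_2^l)(\frl_1\oplus\frl_2)$ contains $\mathrm{End}(V_1^j)\otimes\mathrm{End}(V_2^l)=\mathrm{End}(V_1^j\otimes V_2^l)$, which has no nontrivial invariant subspace.

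I do not anticipate any genuine obstacle: the lemma is distributivity of $\otimes$ over $\oplus$ packaged together with the standard fact that external tensor products of irreducibles remain irreducible. The only care required is book-keeping, so that the summand $V_1^j\otimes V_2^l$ is correctly identified with the index pair $(j,l)$ in the final double direct sum.
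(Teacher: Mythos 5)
Your proof is correct and follows essentially the same route as the paper, which simply observes that the statement follows from the definition of $\boxtimes$ together with the standard facts about irreducible representations of a direct sum recalled just before the lemma. Your only addition is to make explicit the distributivity/invariance bookkeeping and to supply, via Jacobson density (Burnside), the fact that $\rho_1^j\boxtimes\rho_2^l$ is irreducible when the factors are -- a step the paper leaves implicit as part of the classical classification it cites.
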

\begin{proof}
 This follows from the definition of the tensor product representation.
\end{proof}

Let us now denote by $h$  be the standard Hermitian form of signature $(m,n)$ on $\C^{m+n}$, that defines the algebra $\frsu(m,n)$.
A subspace $V$ of $\C^{m+n}$  is \emph{nondegenerate} if the restriction of of the form $h$ to $V$ has trivial radical: i.e. there is no vector that is orthogonal, with respect to $h$, to the whole space. 
Let us now fix a homomorphism $\rho:\frg\to \frsu(m,n)$. Assume that there exists a $\rho(\frg)$-invariant orthogonal decomposition $\C^{m,n}=V_1\oplus\ldots V_k$ where the restriction of $h$ to $V_i$ is nondegenerate and has signature $(m_i,n_i)$. In this case it is easy to check that the representation $\rho$ can be written as $\rho=\iota\circ\oplus\rho_i$ where $\rho_i:\frg\to\frsu(m_i,n_i)$ are homomorphisms and $\iota:\oplus\frsu(m_i,n_i)\to\frsu(m,n)$ is the inclusion associated to the splitting $\C^{m,n}=V_1\oplus\ldots V_k$.
However this is not always the case as exploited in the following example:
\begin{ex}
 Let us consider the homomorphism $\rho:\frgl(\C^2)\to\frsu(2,2)$ given by $X\mapsto \bsm X^a&X^h\\X^h&X^a\esm$ where $X^a=X^*-X$ is the anti-Hermitian part of the matrix $X$ and $X^h=X+X^*$ is the Hermitian part. The only $\rho(\frgl(\C^2))$-invariant subspaces of $\C^2$ are $\<e_1+e_3,e_2+e_4\>$ and $\<e_1-e_3, e_2-e_4\>$, but both subspaces are isotropic for the form $h$ defining $\frsu(2,2)$.
\end{ex}

We will say that a Lie algebra homomorphism $\rho:\frg\to \frsu(m,n)$ is \emph{irreducible} if the associated linear representation is.

We denote by $\H$ the division algebra of quaternions, that is the 4 dimensional real vector space with base $\{1,i,j,k\}$ together with the unique $\R$ bilinear multiplication satisfying $i^2=j^2=k^2=ijk=-1$, and we fix the identification of $\C^2$ with $\H$ by the right $\C$ linear map $(x,y)\mapsto x+jy$. 

If $V$ is a finite dimensional right module over the quaternions, we denote by $\frgl_\H(V)$ the algebra of (right) $\H$-linear endomorphisms of $V$. We moreover denote by $H$ a nondegenerate, quaternionic valued, antiHermitian form on $V$: this means that for each $a,b$ in $\H$ and each $v,w$ in $V$ we have 
$$H(va,wb)=\ov bH(v,w)a=-\ov{H(wb,va)}.$$
It is well known (cfr. \cite[Pages 277-278]{Satake}) that one of the realizations of the algebra $\frso^*(2p)$ is as the subalgebra of $\frgl_\H(\H^p)$ preserving the form $H$.

Let now $\rho:\frg\to\frso^*(2p)$ be a homomorphism. A quaternionic submodule $W$ of $\H^p$ is $\rho(\frg)$-invariant if $\rho(g)w\in W$ for all $w$ in $W$, and is \emph{irreducible as a quaternionic module} if the only $\rho(\frg)$-invariant quaternionic submodules of $W$ are $W$ and $\{0\}$. We will say that a homomorphism $\rho:\frg\to\frso^*(2p)$ is an \emph{irreducible quaternionic representation} if $\H^p$ is an irreducible quaternionic module.

The $\C$ linear identification of $\C^2$ with $\H$  has inverse the map $a+ib+jc+kd\mapsto(a+ib,c-id)$.
The standard inclusion $\tau:\frso^*(2p)\to\frsu(p,p)$ is obtained by identifying $\H^p$ with $\C^{2p}$ in the way explained above by forgetting the possibility of multiplying by $j$, and by realizing that the antiHermitian form $H$ can be written as $ih+jT$ where $h$ is a Hermitian form of signature $(p,p)$ and $T$ is an orthogonal form (cfr. \cite[Pages 277-278]{Satake}).

Let now $\rho:\frg\to \frso^*(2p)$ be an irreducible quaternionic representation. It is not always true that the composition $\tau\circ \rho:\frg\to\frsu(p,p)$ is irreducible: for example the composition of the tight embeddings $\frsu(p,p)\to\frso^*(4p)\to\frsu(2p,2p)$ gives a diagonal embedding, that is clearly not irreducible (cfr. Section \ref{sec:lists} for more details).
Similarly we have:
\begin{lem}\label{lem:2.4}
 Let $\rho:\frg\to\frso^*(2p)$ be a homomorphism. And let $V<\C^{2p}$ be an irreducible complex module. Then either $V$ is a quaternionic submodule of $\H^p$ or $V$ and $Vj$ are disjoint and anti-isomorphic.
\end{lem}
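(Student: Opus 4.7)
The plan is to build everything around the map $J\colon \C^{2p}\to\C^{2p}$ defined by right multiplication by $j$, and to exploit the irreducibility of $V$ via the intersection $V\cap Vj$. Three structural properties of $J$ do most of the work: first, $J$ is $\C$-antilinear in the natural sense (i.e.\ $J(vc)=J(v)\bar c$ for $c\in\C$), which is a direct consequence of the identity $jc=\bar c\,j$ in $\H$; second, since $\rho(\frg)\subset\frso^*(2p)\subset\frgl_\H(\H^p)$ consists of right $\H$-linear endomorphisms, $J$ commutes with $\rho(X)$ for every $X\in\frg$; third, $J^2=-\Id$, so $J$ is in particular a bijection.

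From antilinearity and equivariance of $J$ it follows immediately that $Vj=J(V)$ is again a complex $\rho(\frg)$-invariant subspace of $\C^{2p}$: $\C$-stability uses the antilinearity (rewriting $(vj)c=J(v\bar c)$), while $\rho(\frg)$-invariance uses that $J$ commutes with $\rho(\frg)$. Consequently the intersection $V\cap Vj$ is itself a complex $\rho(\frg)$-invariant subspace of the irreducible module $V$, so by irreducibility it is either $\{0\}$ or all of $V$.

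In the first case, $V$ and $Vj$ are disjoint and the restriction $J\colon V\to Vj$ is a $\C$-antilinear, $\rho(\frg)$-equivariant isomorphism, which is exactly the statement that they are anti-isomorphic. In the second case $V\subseteq Vj$; applying $J$ to this inclusion and using $J^2=-\Id$ gives $Vj\subseteq V$, so $V=Vj$. Since $V$ is already $\C$-stable and is now also closed under right multiplication by $j$, it is closed under multiplication by $\{1,i,j,k\}$ and is therefore a quaternionic submodule of $\H^p$.

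There is no real obstacle beyond making sure the antilinearity convention for $J$ is set up correctly with the right-module identification $(x,y)\leftrightarrow x+jy$; once that bookkeeping is in place, the dichotomy follows from a one-line application of Schur-type reasoning (irreducibility of $V$) to the invariant subspace $V\cap Vj$.
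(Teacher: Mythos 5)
Your proposal is correct and follows essentially the same route as the paper: both arguments use the antilinear, $\rho(\frg)$-equivariant map $J$ induced by right multiplication by $j$, and apply irreducibility to the invariant subspace $V\cap JV$ to get the dichotomy. Your additional bookkeeping (checking $JV$ is complex and invariant, and using $J^2=-\Id$ to upgrade $V\subseteq JV$ to $V=JV$) only makes explicit steps the paper leaves implicit.
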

\begin{proof}
It follows from the formula we gave for the identification of $\H$ with $\C^2$ that the right multiplication by $j$ on $\H^{p}$ induces an antilinear map $J:\C^{2p}\to\C^{2p}$ that commutes with all the elements of $\tau\circ\rho(\frg)$ since $\rho(\frg)$ is contained in the algebra of quaternion linear endomorphisms of $\H^p$. In particular if $V$ is a vector subspace of $\C^{2p}$, then $JV$ is a vector subspace of $\C^{2p}$ that is anti-isomorphic to $V$. This means that if the restriction of $h$ to $V$ has signature $(m,n)$, then the restriction to $JV$ has signature $(n,m)$.
Moreover, whenever $V$ is an irreducible $\tau\circ\rho(\frg)$-module, we get that $V\cap JV$ is a $\tau\circ\rho(\frg)$-submodule that is hence equal either to $V$ or $\{0\}$ by irreducibility. If $V\cap JV=V$ we get that $V$ is a quaternionic module. Otherwise $V$ and $JV$ are disjoint and anti-isomorphic.
\end{proof}
\subsection{Tight homomorphisms}
Recall from the introduction that we denote by $\beta_\Xx$ the \emph{Bergmann cocycle}, that is defined as 
$$\beta_\Xx(x_0,x_1,x_2)=\int_{\Delta(x_0,x_1,x_2)}\o$$ 
where $\o$ denotes the K\"ahler form of the symmetric space $\Xx$ whose Riemannian metric is normalized so that it has holomorphic sectional curvature bounded below by -1, and the notation $\Delta(x_0,x_1,x_2)$ stands for any geodesic triangle with vertices $(x_0,x_1,x_2)$. It was proven in \cite{DT,CO} that the norm of the Bergmann cocycle satisfies $\|\b_\Xx\|=\pi\cdot\rk(\Xx)$.
\begin{defn}
A totally geodesic map $f:\Xx\to\Yy$ is \emph{tight} if and only if $\|f^*\beta_\Yy\|_\infty=\|\beta_\Yy\|_\infty$. 
\end{defn}

A fundamental tool in the study of tight homomorphisms is continuous bounded cohomology. Recall that if $G$ is a locally compact group the continuous bounded $n$-cochains of $G$ is the Banach module consisting of functions
$${\rm C}^n_{cb}(G,\R)=\{f:G^{n+1}\to \R|\; f \text{ is continuous and bounded}\}$$
endowed with the supremum norm and the $G$-action by diagonal multiplication.
The continuous bounded cohomology of $G$,  ${\rm H}^*_{cb}(G,\R)$, is the cohomology of the subcomplex $({\rm C}^n_{cb}(G,\R)^G,d)$ consisting of $G$-invariant functions. Continuous bounded cohomology of locally compact groups was introduced in \cite{BMJEMS} where a number of fundamental results were proven, including that, in degree 2, the norm on ${\rm C}_{cb}^2(G,\R)$ induces a Banach norm on ${\rm H}^2_{cb}(G,\R)$, that is normally known as the Gromov norm, and denoted by $\|\cdot\|_\infty$. Moreover, in degree 2, the following holds:
\begin{thm}[\cite{BMJEMS}]
 Let $G$ be a semisimple Lie group with finite center and no compact factor, and let $\Xx$ be the associated symmetric space. It holds 
 $$\Omega^2(\Xx,\R)^G={\rm H}_{cb}^2(G,\R).$$
\end{thm}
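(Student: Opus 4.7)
The plan is to exhibit a canonical isomorphism $\Phi:\Omega^2(\Xx,\R)^G\to {\rm H}^2_{cb}(G,\R)$ via integration over geodesic triangles, and then to prove injectivity and surjectivity separately.

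First I would \emph{construct} the map. Given a $G$-invariant closed $2$-form $\omega\in\Omega^2(\Xx,\R)^G$, define the cochain
\[
c_\omega(x_0,x_1,x_2)=\int_{\Delta(x_0,x_1,x_2)}\omega
\]
where $\Delta(x_0,x_1,x_2)$ is any geodesic triangle with the prescribed vertices. (Observe that every $G$-invariant form on a symmetric space is closed, since the geodesic symmetry $s_x$ satisfies $s_x^*\omega=-\omega$ but $s_x^*d\omega=d\omega$, hence $d\omega=0$; so the integral is independent of the filling.) Stokes' theorem applied to the geodesic tetrahedron on four points shows $dc_\omega=0$, and $G$-invariance of $\omega$ gives $G$-invariance of $c_\omega$. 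Boundedness follows from the Dupont--Domic--Toledo estimate: the integral of $\omega$ over any totally geodesic $2$-disc in $\Xx$ is uniformly controlled by $\|\omega\|$ and the rank of $\Xx$. Pulled back along the orbit map $G^3\to\Xx^3$, the function $c_\omega$ gives a continuous bounded $G$-invariant $2$-cocycle on $G$, thus a class in ${\rm H}^2_{cb}(G,\R)$.

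Second, for \emph{injectivity}, I would recover $\omega$ from $c_\omega$ pointwise: fix $x\in\Xx$ and tangent vectors $v,w\in T_x\Xx$, then
\[
\omega_x(v,w)=\lim_{t\to 0}\frac{1}{t^2}c_\omega\bigl(x,\exp_x(tv),\exp_x(tw)\bigr).
\]
Since a coboundary $du$ with $u$ bounded has vanishing second-order germ at the diagonal, $c_\omega=du$ forces $\omega=0$. Hence $\Phi$ is injective.

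Third, the main work lies in \emph{surjectivity}. Here I would invoke the Burger--Monod machinery. Their resolution identifies ${\rm H}^2_{cb}(G,\R)$ with the cohomology of $G$-invariant alternating $L^\infty$-functions on $(G/P)^{*+1}$, $P$ a minimal parabolic, and in degree $2$ they show that the comparison map
\[
{\rm H}^2_{cb}(G,\R)\longrightarrow {\rm H}^2_c(G,\R)
\]
is injective; surjectivity onto ${\rm H}^2_c(G,\R)$ comes from the fact that every $G$-continuous cocycle on $G$ in degree $2$ can be represented by a bounded cocycle (again via the Dupont--Domic--Toledo type estimate on the primitive obtained from an invariant form). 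Combining with the van Est isomorphism ${\rm H}^2_c(G,\R)\cong \Omega^2(\Xx)^G$, and checking that the composition $\Omega^2(\Xx)^G\xrightarrow{\Phi}{\rm H}^2_{cb}(G,\R)\to {\rm H}^2_c(G,\R)\xrightarrow{\sim}\Omega^2(\Xx)^G$ is the identity (this is a direct computation since van Est is given by antisymmetrizing at the identity coset), one concludes that $\Phi$ is an isomorphism.

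The principal obstacle is the injectivity of the comparison map in degree $2$: this is the substantive input from \cite{BMJEMS} and ultimately rests on the amenability of $P$ together with the double ergodicity of $G\curvearrowright G/P\times G/P$, which allow one to represent every bounded cohomology class by a Borel alternating cocycle on $(G/P)^3$ and to upgrade a coboundary relation from measurable to continuous. Once this is granted, the remaining steps are essentially formal consequences of van Est and Stokes.
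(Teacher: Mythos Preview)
The paper does not prove this theorem: it is quoted from \cite{BMJEMS} and used as a black box, with no argument given. There is therefore nothing in the paper to compare your proposal against.

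That said, your sketch is a reasonable outline of how the result is actually established in the literature. The ingredients you name are the right ones: the van~Est isomorphism ${\rm H}^*_c(G,\R)\cong\Omega^*(\Xx)^G$, the Dupont construction of a bounded representative via integration over geodesic simplices, and the Burger--Monod injectivity of the comparison map ${\rm H}^2_{cb}(G,\R)\to{\rm H}^2_c(G,\R)$ in degree~2. Two minor comments. First, your injectivity argument (recovering $\omega$ from the infinitesimal behaviour of $c_\omega$) is more delicate than you suggest: a bounded coboundary $du$ need not vanish to second order at the diagonal unless $u$ is sufficiently regular, so one really wants to pass through the comparison map and use that $\Phi$ followed by ${\rm H}^2_{cb}\to{\rm H}^2_c\to\Omega^2(\Xx)^G$ is the identity. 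Second, the boundedness of $c_\omega$ does not require the Domic--Toledo or Clerc--{\O}rsted computations of the exact norm; Dupont's original argument (that the volume of a geodesic simplex in a symmetric space of nonpositive curvature is uniformly bounded) suffices and works for any invariant form, not just the K\"ahler form. With these adjustments your outline is correct, but again, none of this appears in the present paper.
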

This says that the second bounded cohomology of a semisimple Lie group is generated by the K\"ahler forms of the irreducible factors of the associated symmetric space that are of Hermitian type. If $G$ is a Hermitian Lie group, we will denote by $\k^b_G$ the \emph{bounded K\"ahler class}: the class in ${\rm H}_{cb}^2(G,\R)$ that corresponds to the K\"ahler form of the symmetric space. We notice here that if $G$ is semisimple and can be decomposed as $G=G_1\times\ldots\times G_k$ then it holds $\k^b_G=\k^b_{G_1}+\ldots+\k^b_{G_k}$.  
The Gromov norm of the K\"ahler class was computed by Domic and Toledo for classical domains and by Clerc and \O rsted for general cases:
\begin{thm}[\cite{DT,CO,tight}]
 Let $G$ be a Hermitian Lie group of real rank $n$. Then
 $$\|\k^b_G\|_\infty=n\pi.$$
\end{thm}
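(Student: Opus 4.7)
The plan is to establish the identity by matching an upper bound with a lower bound on $\|\k^b_G\|_\infty$. For the upper bound, fix a basepoint $x_0\in\Xx$ and represent $\k^b_G$ by the Bergmann cocycle itself, setting
\[
\b(g_0,g_1,g_2):=\b_\Xx(g_0x_0,g_1x_0,g_2x_0).
\]
This is a continuous bounded $G$-invariant cocycle representing $\k^b_G$ under the identification $\Omega^2(\Xx,\R)^G={\rm H}^2_{cb}(G,\R)$ quoted immediately above. Since the Gromov norm is an infimum over representatives, one has $\|\k^b_G\|_\infty\le\|\b\|_\infty$, and the pointwise Clerc--\O rsted inequality $|\b_\Xx|<n\pi$ (their generalization of the Domic--Toledo bound for classical domains) yields $\|\k^b_G\|_\infty\le n\pi$.

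For the lower bound I would exploit a maximal polydisc embedding $\iota:\SU(1,1)^n\hookrightarrow G$, whose associated totally geodesic map $\D^n\hookrightarrow\Xx$ is holomorphic with rank-$n$ image. On K\"ahler forms one has $\iota^*\o_\Xx=\o_{\D^n}$, so the induced map in bounded cohomology sends $\k^b_G$ to $\k^b_{\SU(1,1)^n}=\sum_{i=1}^n \k^b_{\SU(1,1)}$ using the additivity across factors recalled in the preceding paragraph. Pullback is norm-nonincreasing in bounded cohomology, so it suffices to prove $\|\k^b_{\SU(1,1)^n}\|_\infty\ge n\pi$.

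The main obstacle is therefore the rank-one computation together with additivity of the Gromov norm across the factors. For $\SU(1,1)$ itself, triangles in the Poincar\'e disc $\D$ have signed area at most $\pi$ and this value is realised in the limit by almost-ideal triangles (pushing the vertices to the circle at infinity), giving $\|\k^b_{\SU(1,1)}\|_\infty=\pi$. For the product $\SU(1,1)^n$, taking product triangles in $\D^n$ with each factor almost ideal produces Bergmann cocycle values tending to $n\pi$; since the $n$ summands $\k^b_{\SU(1,1)}$ come from $n$ independent direct factors and thus lie in linearly independent subspaces of ${\rm H}^2_{cb}(\SU(1,1)^n,\R)$, no coboundary can cancel them simultaneously on triples distributed across all factors, and the supremum of any cohomologous cocycle stays at least $n\pi$. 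Combining the two estimates yields the claimed equality.
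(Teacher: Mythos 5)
Your upper bound is fine: the Bergmann cocycle $\beta(g_0,g_1,g_2)=\beta_\Xx(g_0x_0,g_1x_0,g_2x_0)$ is a bounded representative of $\k^b_G$, and the Clerc--\O rsted pointwise bound gives $\|\k^b_G\|_\infty\le n\pi$. The genuine gap is in the lower bound. The Gromov norm is an \emph{infimum} of sup norms over all cocycles in the class, so exhibiting triples (almost ideal, or products of almost ideal ones) on which the Bergmann cocycle is close to $n\pi$ only shows that this particular representative has sup norm $n\pi$; it says nothing about cohomologous representatives. Your justification that ``the summands lie in linearly independent subspaces of ${\rm H}^2_{cb}(\SU(1,1)^n,\R)$, so no coboundary can cancel them'' is not a valid inference: linear independence of classes carries no metric information, and in general the norm of a sum of classes can be strictly smaller than the sum of the norms (norms are only subadditive). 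In particular both your rank-one claim $\|\k^b_{\SU(1,1)}\|_\infty=\pi$ and your additivity claim for $\SU(1,1)^n$ are asserted, not proved; these lower bounds are precisely the content of the theorem of Domic--Toledo and Clerc--\O rsted which the paper simply cites, so there is no shortcut of the kind you sketch.

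To repair the argument you need an actual mechanism that controls all representatives. Two standard ones: (i) compute ${\rm H}^2_{cb}$ isometrically on an amenable boundary (Burger--Monod); double ergodicity of the $G$-action on the boundary kills invariant alternating coboundaries in degree $2$, so the boundary (Maslov/orientation) cocycle is an isometric representative, and its essential sup is $n\pi$; or (ii) evaluate against surface groups: pull $\k^b_G$ back along the diagonal disc $d:\frsu(1,1)\to\frg$, which satisfies $d^*\k^b_G=n\,\k^b_{\SU(1,1)}$ as recalled after Definition 2.7 (this also lets you bypass the product-additivity issue entirely), then restrict to a hyperbolization $\Gamma_g<\SU(1,1)$ and use Gauss--Bonnet together with Gromov's computation $\|[\Sigma_g]\|_1=4g-4$ to get $\|\k^b_{\SU(1,1)}\|_\infty\ge 2\pi(2g-2)/(4g-4)=\pi$, hence $\|\k^b_G\|_\infty\ge n\pi$. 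Without an ingredient of this type the lower half of your proof does not close.
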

Any totally geodesic map $f:\Xx\to\Yy$ between symmetric spaces corresponds to a homomorphism $\rho:G\to H$ between the associated isometry groups (possibly replacing $G$ by a covering group). 
A crucial observation of \cite{tight} is the following equivalent characterization of tightness for a totally geodesic map $f$ in term of metric properties of the pullback map in bounded cohomology associated to the homomorphism $\rho$:
\begin{prop}[{\cite[Proposition 6]{tight}}]\label{prop:bc}
 Let $\Xx,\Yy$ be Hermitian symmetric spaces with isometry group $G,H$. Let $f:\Xx\to\Yy$ be a totally geodesic map, and let $\rho:G\to H$ be the corresponding group homomorphism. Then $f$ is tight if and only if 
 $$\|\rho^*\k_H^b\|_\infty=\|\k^b_H\|_\infty.$$
\end{prop}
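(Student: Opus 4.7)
The plan is to represent $\k^b_H$ by the Bergmann cocycle at the cochain level, observe that pulling it back along $f$ produces a cochain whose sup norm is exactly the quantity appearing in the definition of tightness, and then invoke the functorial machinery of continuous bounded cohomology to identify this sup norm with the Gromov norm of the pulled-back class.

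\textbf{Canonical and pullback representatives.} Fix $x_0\in\Xx$ and set $y_0:=f(x_0)\in\Yy$. Under the Burger--Monod isomorphism ${\rm H}^2_{cb}(H,\R)\cong\Om^2(\Yy,\R)^H$ recalled above, $\k^b_H$ is represented by the continuous $H$-invariant bounded cocycle
$$
\hat\b_\Yy(h_0,h_1,h_2):=\b_\Yy(h_0y_0,h_1y_0,h_2y_0),
$$
whose sup norm equals $\pi\rk(\Yy)=\|\k^b_H\|_\infty$ by the theorem of Clerc--\O rsted quoted above. Pulling back at the cochain level and using the $\rho$-equivariance of $f$ yields
$$
\rho^*\hat\b_\Yy(g_0,g_1,g_2)=\b_\Yy\bigl(f(g_0 x_0),f(g_1 x_0),f(g_2 x_0)\bigr).
$$
Since $G$ acts transitively on $\Xx$, the sup norm of this pullback cochain is exactly $\sup_{\Im(f)^3}|\b_\Yy|$, so in particular $\|\rho^*\k^b_H\|_\infty\le\sup_{\Im(f)^3}|\b_\Yy|$.

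\textbf{Matching norms and conclusion.} The reverse inequality, namely that $\rho^*\hat\b_\Yy$ is itself norm-minimizing in its class, is not automatic, and is the delicate step of the argument. It rests on the Burger--Monod functorial result that the complex of bounded $G$-invariant continuous cochains on $G^{\bullet+1}$ (equivalently, on the Shilov boundary of $\Xx$, which is an amenable $G$-space) computes ${\rm H}^*_{cb}(G,\R)$ \emph{isometrically}, meaning that the sup norm on cocycles agrees with the Gromov norm on cohomology classes. Applied to $\rho^*\hat\b_\Yy$, this yields the crucial identity $\|\rho^*\k^b_H\|_\infty=\sup_{\Im(f)^3}|\b_\Yy|$. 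Combining with $\|\k^b_H\|_\infty=\sup_{\Yy^3}|\b_\Yy|$, the proposition's equality $\|\rho^*\k^b_H\|_\infty=\|\k^b_H\|_\infty$ is equivalent to $\sup_{\Im(f)^3}|\b_\Yy|=\sup_{\Yy^3}|\b_\Yy|$, which is precisely the definition of tightness of $f$.

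The main obstacle is the isometry in the functorial resolution: without it, some cohomologous cochain could a priori have strictly smaller sup norm than $\rho^*\hat\b_\Yy$, and one would only get an implication rather than an equivalence. This is where one must invoke the specific double-ergodicity / amenability properties of the Shilov boundary that underlie the Burger--Monod machinery.
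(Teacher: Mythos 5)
The paper itself offers no proof of this proposition; it simply quotes it from Burger--Iozzi--Wienhard, so your attempt has to be measured against the standard argument there. Your first half is fine: the cocycle $\hat\beta_\Yy$ does represent $\kappa^b_H$, the equivariance computation is correct, and by transitivity you obtain $\|\rho^*\kappa^b_H\|_\infty\le\sup_{\Im(f)^3}|\beta_\Yy|\le\sup_{\Yy^3}|\beta_\Yy|=\|\kappa^b_H\|_\infty$; note that this already gives one of the two implications (norm equality forces tightness) with no further input.

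The gap is in the converse, exactly where you flag it, and the justification you give does not close it. ``Computes ${\rm H}^*_{cb}(G,\R)$ isometrically'' means that the Gromov norm of a class is the \emph{infimum} of the sup norms over all cocycles representing it in that complex; it does \emph{not} mean that the sup norm of your chosen cocycle $\rho^*\hat\beta_\Yy$ equals the norm of its class, so the inequality $\|\rho^*\kappa^b_H\|_\infty\ge\sup_{\Im(f)^3}|\beta_\Yy|$ is simply not established. Moreover the Shilov boundary is in general \emph{not} an amenable $G$-space (its stabilizers are maximal parabolics, whose Levi factors are noncompact); the amenable space in the Burger--Monod machinery is the Furstenberg boundary $G/P_{\min}$, and the feature that makes a distinguished representative norm-minimizing in degree two is the vanishing of alternating invariant $L^\infty$ functions on pairs of boundary points (double ergodicity), which forces \emph{uniqueness} of the alternating invariant boundary cocycle. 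To use this you would have to actually produce the boundary cocycle representing $\rho^*\kappa^b_H$ (via a $\rho$-equivariant measurable boundary map and the Clerc--\O rsted extension of $\beta_\Yy$ to the Shilov boundary of $\Yy$) and then compare its essential sup with $\sup_{\Im(f)^3}|\beta_\Yy|$ --- none of which appears in your argument. The route actually taken in the cited source sidesteps this: since ${\rm H}^2_{cb}(G,\R)\cong\Omega^2(\Xx,\R)^G$, one writes $f^*\omega_\Yy=\sum_i\lambda_i\omega_{\Xx_i}$, hence $\rho^*\kappa^b_H=\sum_i\lambda_i\kappa^b_{G_i}$, uses the norm formula $\|\sum_i\lambda_i\kappa^b_{G_i}\|_\infty=\pi\sum_i|\lambda_i|\rk(\Xx_i)$ (this is where the boundary theory genuinely enters), and checks by a polydisc/ideal-triangle computation that $\sup_{\Im(f)^3}|\beta_\Yy|$ equals the same quantity; the equivalence then follows. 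As it stands, your proof establishes only one direction of the proposition.
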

We will often use Proposition \ref{prop:bc} and switch between these two characterizations of tight maps. Moreover, using the correspondence between Lie algebra homomorphisms ${\rm d} \rho:\frak m\to \frak g$, totally geodesic maps $f_\rho:\Xx\to \Yy$, and Lie group homomorphisms $\rho:M\to G$,  we will say that a homomorphism $\rho$ is  \emph{holomorphic} if the associated totally geodesic map is holomorphic. Similarly we will say that a Lie algebra homomorphism ${\rm d} \rho:\frak m\to \frak g$ is \emph{tight} if its associated totally geodesic map, or equivalently, its group homomorphism, is.
A totally geodesic map $f:\Xx\to \Yy$  is \emph{positive} if for any triple $(x_0,x_1,x_2)\in \Xx^3$ with $\beta_\Xx(x_0,x_1,x_2)>0$ we have $\beta_\Yy(f(x_0),f(x_1),f(x_2))\geq0$, and \emph{strictly positive} if the strict inequality holds. This property has a bounded cohomological counterpart: let $\rho_f:G\to H$ be the group homomorphism associated to $f$. It is easy to check that  if the group $G$  splits as a product $G=G_1\times\ldots\times G_k$, and $\rho_f^*(\k_H^b)=\sum \alpha_i\k^b_{G_i}$ then the map $f$ is positive if  all $\alpha_i$ are greater than or equal to 0 and is strictly positive if all $\alpha_i$ are strictly positive.

An important geometric feature of any Hermitian symmetric space $\Xx$ is that the maximal flats can be complexified to obtain a holomorphic and isometric embedding of ${\rk(\Xx)}$ copies of the Poincar\'e disc, $\D^{\rk(\Xx)}\to \Xx$. These subspaces are called \emph{polydiscs} and are all conjugate under the action of the isometry group of $\Xx$. 
\begin{defn}\label{defn:2.7}
A \emph{diagonal disc} is the composition of the diagonal inclusion of the Poincar\'e disc in $\D^{\rk(\Xx)}$ and a maximal polydisc. 
\end{defn}
It is easy to check that a diagonal disc is an example of a tight map: indeed since the inclusion $f$ of a polydisc in $\Xx$ is holomorphic and isometric, one gets that $f^*\o_{\Xx}=\o_{\D^{\rk(\Xx)}}$ and moreover it is easy to check that, denoting by $\Delta$ the diagonal inclusion, one gets $\Delta^*\o_{\D^{\rk(\Xx)}}=\rk(\Xx)\o_{\D}$. This implies that: 
$$\sup_{(x_0,x_1,x_2)\in \Im(d)^3}|\beta_\Xx(x_0,x_1,x_2)|=\rk(\Xx)\pi,\, \sup_{(y_0,y_1,y_2)\in \D^3}|\beta_\D(y_0,y_1,y_2)|=\|\beta_\Xx\|_\infty\pi.$$

The following proposition summarizes easy properties of tight homomorphisms:
\begin{prop}\label{prop:tighteasy}
 \begin{enumerate}
  \item If a map $\rho:\frak m\to \frg_1\oplus\ldots\oplus \frg_n$ is tight  all the induced maps $\rho_i:\frak m\to \frg_i$ are tight.
  \item Let $\rho:\frak m\to \frg$ and $\tau:\frg\to \frl$ be homomorphisms. If the composition $\tau\circ \rho$ is tight and $\frg$ is simple, or $\tau$ is strictly positive, then both $\rho$ and $\tau$ are tight. If $\rho$ is tight and $\tau$ is tight and positive, then $\tau\circ \rho$ is tight.
 \end{enumerate}
\end{prop}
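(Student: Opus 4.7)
My plan is to translate everything into bounded cohomology via Proposition~\ref{prop:bc} and exploit the decomposition $\k^b_{G_1\times\cdots\times G_n} = \sum_i \k^b_{G_i}$, whose Gromov norm equals $\pi\sum_i \rk(\Xx_i)$ and is realised by the Bergmann cocycle on a product of maximal triangles in the polydisc.

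Part (1) is immediate from sub-additivity: decomposing $\rho^*\k^b_G = \sum_i \rho_i^*\k^b_{G_i}$, the chain
\[ \pi{\textstyle\sum_i}\rk(\Xx_i) \;=\; \|\rho^*\k^b_G\|_\infty \;\leq\; {\textstyle\sum_i}\|\rho_i^*\k^b_{G_i}\|_\infty \;\leq\; \pi{\textstyle\sum_i}\rk(\Xx_i) \]
forces equality in each step, and in particular $\|\rho_i^*\k^b_{G_i}\|_\infty = \pi\rk(\Xx_i)$, so every $\rho_i$ is tight.

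For the first assertion of (2), tightness of $\tau$ is in both hypotheses immediate from the sandwich $\|\k^b_L\|_\infty = \|\rho^*\tau^*\k^b_L\|_\infty \leq \|\tau^*\k^b_L\|_\infty \leq \|\k^b_L\|_\infty$. To get tightness of $\rho$ in the simple case I would use that $\frg$ simple forces $\tau^*\k^b_L = \lambda\k^b_G$ for a scalar (nonzero, else the composition's norm would vanish), and the same sandwich pins $|\lambda|\|\rho^*\k^b_G\|_\infty = |\lambda|\|\k^b_G\|_\infty$. In the strictly positive case I would write $\tau^*\k^b_L = \sum_i \alpha_i\k^b_{G_i}$ with $\alpha_i > 0$, so that $\sum_i\alpha_i\rho_i^*\beta_{\Xx_i}$ represents $\rho^*\tau^*\k^b_L$; extracting a sequence of triangles $\Delta_n$ in the symmetric space of $M$ on which this cocycle realises its norm $\pi\rk(\Yy)=\sum_i \alpha_i\pi\rk(\Xx_i)$ (the last equality using tightness of $\tau$ just established), the individual bounds $\int_{\Delta_n}\rho_i^*\beta_{\Xx_i}\leq\pi\rk(\Xx_i)$ together with the strict positivity of each weight force every $\int_{\Delta_n}\rho_i^*\beta_{\Xx_i}\to\pi\rk(\Xx_i)$; summing without the weights yields $\int_{\Delta_n}\rho^*\beta_G \to \pi\sum_i \rk(\Xx_i)$, i.e.\ $\rho$ is tight.

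The second assertion of (2) is the same triangle-extraction argument run in reverse: tightness of $\rho$ supplies $\Delta_n$ with $\int_{\Delta_n}\rho^*\beta_G \to \pi\sum_i\rk(\Xx_i)$, the same individual bounds force each summand $\int_{\Delta_n}\rho_i^*\beta_{\Xx_i}$ to its own supremum $\pi\rk(\Xx_i)$, and then $\int_{\Delta_n}\rho^*\tau^*\beta_\Yy = \sum_i \alpha_i\int_{\Delta_n}\rho_i^*\beta_{\Xx_i} \to \sum_i \alpha_i\pi\rk(\Xx_i)=\pi\rk(\Yy)$ by tightness of $\tau$. The only substantive step in the whole argument is precisely this simultaneous-maximisation step: when the coefficients share a strict sign, the sup of a weighted sum of Bergmann cocycles diagonalises into a simultaneous sup of each individual summand, and it is exactly this feature that makes the sign hypotheses in (2) indispensable.
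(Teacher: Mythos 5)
Your argument is correct, but note that the paper itself does not prove this proposition: its ``proof'' is a citation (part (1) is Lemma 3.7 of \cite{Ham2}, and part (2) combines Lemmas 3.3 and 3.5 there). What you give instead is a self-contained bounded-cohomology argument in the spirit of those lemmas, and it hangs together, provided you acknowledge the standard inputs it silently uses: (i) the norm formula $\bigl\|\sum_i\alpha_i\k^b_{G_i}\bigr\|_\infty=\pi\sum_i|\alpha_i|\rk(G_i)$ (Domic--Toledo/Clerc--\O rsted together with the Burger--Monod identification ${\rm H}^2_{cb}(G,\R)\cong\Omega^2(\Xx)^G$), which the paper also uses without comment, e.g.\ in Lemma \ref{lem:discs}; (ii) the identification, at the level of \emph{cocycles} and not merely of classes, of the pullback of $\beta_\Yy$ under the totally geodesic map with $\sum_i\alpha_i\,f_i^*\beta_{\Xx_i}$ --- this is legitimate because a totally geodesic map sends geodesic triangles to geodesic triangles and pulls back $\o_\Yy$ to the invariant form $\sum_i\alpha_i\o_{\Xx_i}$, but it deserves a sentence, since cohomologous cocycles need not agree pointwise; (iii) the fact that the supremum of this representative over triples in the symmetric space of $\frak m$ dominates the Gromov norm of the pulled-back class (isometric computation of ${\rm H}^2_{cb}$ on the resolution over the symmetric space), which is what legitimises your triangle-extraction step; and (iv) the translation of the geometric (strict) positivity hypothesis into the sign condition on the coefficients $\alpha_i$, which the paper states only in the converse direction but which follows by restricting to triangles varying in a single factor. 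For the simple-$\frg$ branch your use of $\dim{\rm H}^2_{cb}(G,\R)=1$ is likewise covered by the quoted Burger--Monod theorem. In short, your route reproves the cited lemmas of \cite{Ham2} with the same toolkit the paper deploys elsewhere (after Definition \ref{defn:2.7}, in Lemma \ref{lem:discs} and in Propositions \ref{prop:2.1}--\ref{prop:2.2}); its advantage is that it makes transparent exactly where the sign hypotheses in (2) enter (the simultaneous-maximisation step), whereas the paper's citation simply keeps the exposition short.
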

\begin{proof}
(1) is Lemma 3.7 in \cite{Ham2};
 (2) is a combination of Lemma 3.3 and Lemma 3.5 in \cite{Ham2}.
\end{proof}

We will also need the following lemma on the existence of diagonal discs:
\begin{lem}\label{lem:discs}
 Let $\rho:\frg_1\oplus\frg_2\to \frh$ be a tight homomorphism. Then there exist maps $d_i:\frsu(1,1)\to\frg_i$ such that, denoting by $d:\frsu(1,1)^2\to\frg_1\oplus\frg_2$ the direct sum of the homomorphisms $d_i$, we have $\rho \circ d:\frsu(1,1)^2\to\frh$ is tight.
\end{lem}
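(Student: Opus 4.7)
The plan is to take $d_i$ to be a diagonal disc in $\frg_i$ with appropriately chosen orientation in each simple factor, and to verify the tightness of $\rho\circ d$ through Proposition~\ref{prop:bc}. First I would decompose $\frg_i=\bigoplus_j\frg_{i,j}$ into Hermitian simple factors; since the second continuous bounded cohomology of a Hermitian Lie group is spanned by the bounded Kähler classes of its simple Hermitian factors, I could write
\[
\rho^*\k^b_H=\sum_{i,j}\alpha_{i,j}\k^b_{G_{i,j}}
\]
for some real numbers $\alpha_{i,j}$.

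Next I would invoke the product formula for the Gromov norm, $\|\sum_{i,j}\alpha_{i,j}\k^b_{G_{i,j}}\|_\infty=\pi\sum_{i,j}|\alpha_{i,j}|\rk(\calX_{i,j})$: the upper bound is subadditivity, while the lower bound is realized by geodesic triangles in $\prod\calX_{i,j}$ whose factor projections independently achieve $|\beta_{\calX_{i,j}}|=\pi\rk(\calX_{i,j})$ with signs matching those of the $\alpha_{i,j}$. Combined with the tightness of $\rho$ and with $\|\k^b_H\|_\infty=\pi\rk(\calY)$, this would yield the key identity
\[
\sum_{i,j}|\alpha_{i,j}|\rk(\calX_{i,j})=\rk(\calY).
\]

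For each simple factor $\frg_{i,j}$ I would fix a diagonal disc $d_{i,j}^+:\frsu(1,1)\to\frg_{i,j}$ (Definition~\ref{defn:2.7}), with $(d_{i,j}^+)^*\k^b_{G_{i,j}}=\rk(\calX_{i,j})\k^b_\D$, and let $d_{i,j}^-$ be obtained from $d_{i,j}^+$ by postcomposing with an antiholomorphic isometry of $\calX_{i,j}$, so that $(d_{i,j}^-)^*\k^b_{G_{i,j}}=-\rk(\calX_{i,j})\k^b_\D$. Setting $\epsilon_{i,j}=\sign(\alpha_{i,j})$, I would define $d_i=\bigoplus_j d_{i,j}^{\epsilon_{i,j}}:\frsu(1,1)\to\frg_i$ and $d=d_1\oplus d_2$. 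A direct computation then gives
\[
(\rho\circ d)^*\k^b_H=\sum_i\Bigl(\sum_j|\alpha_{i,j}|\rk(\calX_{i,j})\Bigr)\k^b_{\D_i},
\]
whose Gromov norm, by the same product formula together with the key identity, equals $\pi\rk(\calY)=\|\k^b_H\|_\infty$. Proposition~\ref{prop:bc} would then yield the tightness of $\rho\circ d$.

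The only delicate point is the sign-flipping step, which requires the existence of an antiholomorphic isometry of each irreducible Hermitian symmetric space; this is standard, so I do not anticipate a serious obstacle. The substantive content of the argument is the combination of the product norm formula on ${\rm H}^2_{cb}$ with the defining property of diagonal discs.
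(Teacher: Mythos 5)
Your argument is correct and is essentially the paper's own proof: decompose $\rho^*\k^b_H$ over the simple Hermitian factors, use the norm formula to turn tightness into $\sum_{i,j}|\alpha_{i,j}|\rk(\calX_{i,j})=\rk(\calY)$, and then build $d$ out of diagonal discs whose orientations match the signs of the $\alpha_{i,j}$, concluding via Proposition~\ref{prop:bc}. The only cosmetic difference is that you flip signs by postcomposing each diagonal disc with an antiholomorphic isometry of $\calX_{i,j}$, whereas the paper precomposes with the explicit antiholomorphic automorphism of $\frsu(1,1)$ (an ``antidiagonal disc''), which sidesteps even the standard fact about antiholomorphic isometries of the target factors.
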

\begin{proof}
Let us denote by $G_1^1,\ldots, G_1^{k_1}$ and $G_2^1,\ldots,G_2^{k_2}$ the simple factors of $G_1$ and $G_2$ respectively. Let $\rho:G_1\times G_2\to H$ denote also the Lie group homomorphism associated to the Lie algebra homomorphism $\rho$. We have 
$$\rho^*\k_H^b=\sum_{j=1}^{k_1}\alpha_1^j\k^b_{G_1^j}+\sum_{j=1}^{k_2}\alpha_1^j\k^b_{G_2^j}$$
for some coefficients $\alpha_i^j$. Moreover, since $\rho$ is tight we have $\sum_{i,j} |\alpha_i^j|\cdot\rk(G^j_i)=\rk( H)$. Let us denote by $d_i^j:\frsu(1,1)\to \frg_i^j$ a diagonal disc, and by $\ov d_i^j$ an antidiagonal disc, that means the composition $d_i^j\circ a$ where $a:\frsu(1,1)\to\frsu(1,1)$ is the antiholomorphic isomorphism given by $\bsm a&b\\\ov b&\ov a\esm\mapsto \bsm\ov a&-\ov b\\- b& a\esm.$ It is easy to check, using the same argument as after Definition \ref{defn:2.7}, that $(d_i^j)^*\o_{G_i^j}=\rk (G^i_j)\o_{\SU(1,1)}$ and hence $(d_i^j)^*\k^b_{G_i^j}=\rk(G_i^j)\k^b_{\SU(1,1)}$. We define $$f_i^j=\begin{cases}
d_i^j \mbox{, if } \alpha_i^j\geq 0,\\
\ov d_i^j \mbox{, if } \alpha_i^j< 0.
\end{cases}$$
We thus get that the compositions $d_i:\frsu(1,1)\to\frg_i$ of a diagonal embedding $d:\frsu(1,1)\to\frsu(1,1)^{k_i}$ and the map $\oplus f_i^j:\oplus\frsu(1,1)\to \oplus\frg_i^j$ satisfy the assumption of the lemma.
\end{proof}

Recall that Hermitian Lie algebras are divided into two categories, according to whether the associated symmetric space is biholomorphic to a domain of the form $V+i\Omega$ where $V$ is a real vector space and $\Omega\subset V$ is a proper convex open cone. The algebras for which this holds are called of \emph{tube type} and are $\frsu(m,m),\frso^*(4m),\frsp(2m,\R), \frso(2,m),\fre_{7(-25)}$, the algebras in the second category, the ones that are \emph{not of tube type}, are $\frsu(m,n),\frso^*(4n+2),\fre_{6(-14)}$.
An important geometric feature of tight homomorphisms, discovered in \cite{tight}, is that they essentially do not mix between tube type algebras and algebras that are not tube type, as made precise in the next proposition.  
\begin{prop}[{\cite[Theorem 9]{tight}}]\label{prop:tt}
 Let $\rho:\frg_1\to \frg_2$ be a tight homomorphism, then
 \begin{enumerate}
  \item if $\frg_1$ is of tube type, then there exists a maximal subalgebra $\frg_2^T$ of $\frg_2$ of tube type such that $\rho(\frg_1)\subseteq \frg_2^T$, 
  \item if $\rho$ is injective and $\frg_2$ is of tube type, then $\frg_1$ is of tube type.
 \end{enumerate}
\end{prop}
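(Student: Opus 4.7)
The strategy is to combine two ingredients: the continuous extension of tight totally geodesic maps to the Shilov boundaries (established in \cite{tight}) and a boundary characterization of tube type, namely that a Hermitian symmetric space $\Xx$ is of tube type if and only if the Bergmann cocycle $\beta_\Xx$ takes only finitely many values on transverse triples in the Shilov boundary $\check\Xx^3$ (equivalently, the Hermitian triple product on $\check\Xx$ is real valued). The same criterion works relatively: a holomorphically embedded symmetric subspace of $\Xx$ sits inside a tube type subdomain if and only if the restriction of $\beta_\Xx$ to the Shilov boundary of that subspace has finite range.

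With these tools in hand, I would first recall that a tight map $f:\Xx_1\to\Xx_2$ extends to a measurable $G_1$-equivariant map $\hat f:\check\Xx_1\to\check\Xx_2$, and that the cocycle $\hat f^*\beta_{\Xx_2}$ represents the bounded cohomology class $\rho^*\kappa^b_{H}$. Applying the Burger--Monod resolution of $\mathrm{H}^2_{cb}$ on the Furstenberg boundary (which projects onto the Shilov boundary for Hermitian groups), one identifies $\hat f^*\beta_{\Xx_2}$ on transverse triples with a canonical multiple of $\beta_{\Xx_1}$; tightness, meaning that the Gromov norm of $\rho^*\kappa_H^b$ equals $\pi\rk(\Xx_2)$, fixes this multiple and in particular shows that $\hat f^*\beta_{\Xx_2}$ takes only as many values as a scalar linear combination of the Bergmann cocycles of the simple factors of $\frg_1$.

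For Part (1), if $\frg_1$ is of tube type then $\beta_{\Xx_1}$ takes finitely many values on $\check\Xx_1^3$, so by the previous step $\hat f^*\beta_{\Xx_2}$ does too; equivalently $\beta_{\Xx_2}$ has finite range on $\hat f(\check\Xx_1)^3\subset\check\Xx_2^3$. The relative tube type criterion then forces $\hat f(\check\Xx_1)$ to lie in the Shilov boundary of a holomorphic tube type subdomain of $\Xx_2$, and the maximal such subdomain containing $f(\Xx_1)$ corresponds to the required tube type subalgebra $\frg_2^T\supseteq\rho(\frg_1)$. For Part (2), if $\frg_2$ is of tube type then $\beta_{\Xx_2}$ has finite range on $\check\Xx_2^3$; injectivity of $\rho$ propagates through $\hat f$ to ensure that enough transverse triples of $\check\Xx_1$ have transverse images, so the pullback $\hat f^*\beta_{\Xx_2}$ has finite range on a set of full measure of transverse triples. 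The cohomological identification from the previous step then forces $\beta_{\Xx_1}$ itself to have finite range, so $\frg_1$ is of tube type.

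The main obstacle is the identification of $\hat f^*\beta_{\Xx_2}$ with a scalar combination of the Bergmann cocycles on $\check\Xx_1$: it is not a formal consequence of tightness on the interior but requires that bounded Kähler classes admit canonical boundary representatives and that these representatives realize the Gromov norm. A secondary delicacy, for Part (2), is the step from injectivity of $\rho$ at the Lie algebra level to the genericity of transverse pairs in $\hat f(\check\Xx_1)$, which uses that the image $f(\Xx_1)$ is a totally geodesic holomorphic subspace and that the Shilov boundary extension is compatible with the bounded symmetric domain realization.
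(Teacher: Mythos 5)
The paper itself offers no proof of this proposition: it is imported verbatim from \cite[Theorem 9]{tight}, so the only meaningful comparison is with the Burger--Iozzi--Wienhard argument, and your toolbox (extension of tight maps to Shilov boundaries, plus the characterization of tube type by finiteness of the set of values of the Bergmann cocycle on triples in the Shilov boundary) is indeed the right circle of ideas. But as written your sketch leaves the decisive steps unproved. The central one you flag yourself: the almost-everywhere identification of $\hat f^*\beta_{\Xx_2}$ on Shilov-boundary triples with $\sum_i\alpha_i\beta_{\Xx_1,i}$, where the $\alpha_i$ are the coefficients of $\rho^*\k^b_H$ on the simple factors of $\frg_1$, is exactly where the work lies (functorial realization of ${\rm H}^2_{cb}$ by $L^\infty$ cocycles on the boundary, absence of coboundaries via ergodicity on pairs); announcing it as "the main obstacle" does not discharge it, so the proof is incomplete at its core.

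Beyond that there are two concrete gaps. For part (1), the "relative tube type criterion" you invoke (finite range of $\beta_{\Xx_2}$ on the boundary of a subspace forces containment in a tube type subdomain) is not a quotable statement, and you apply it to $f(\Xx_1)$, which need not be holomorphically embedded -- the tight non-holomorphic $\frsu(1,1)\to\frsp(2n,\R)$ is the standing counterexample to that implicit assumption; moreover a statement about $\hat f(\check{\Xx}_1)$ does not by itself yield the algebra-level containment $\rho(\frg_1)\subseteq\frg_2^T$. What \cite{tight} actually uses is that maximal triples (which form a positive-measure set of Shilov triples precisely when the domain is of tube type) are sent to maximal triples by tightness, that every maximal triple of $\check{\Xx}_2$ lies in the Shilov boundary of a unique maximal tube type subdomain, and that uniqueness plus $\rho(G_1)$-equivariance forces $\rho(G_1)$ to preserve that subdomain; some such mechanism is needed and is missing from your sketch. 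For part (2), your argument only controls the simple factors of $\frg_1$ whose coefficient $\alpha_i$ is nonzero: a factor with $\alpha_i=0$ is invisible to the cocycle identity, so "finite range of $\sum\alpha_i\beta_{\Xx_1,i}$" says nothing about it. Ruling such factors out requires an extra input, e.g.\ the compact-centralizer part of the structure theorem of \cite{tight}: if $\alpha_i=0$ the remaining factors already map tightly, their image has compact centralizer in $\frg_2$, and injectivity would force the noncompact $i$-th factor into that centralizer, a contradiction. Without this step part (2) is not established for all factors of $\frg_1$.
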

 An irreducible homomorphism of $\frsu(1,1)$ in $\frsu(p,q)$ is tight if and only if its highest weight is odd: two different proofs of this fact can be found in \cite[Lemma 9.5]{tight} and \cite[Theorem 6.1]{Ham2}. By choosing, in each case, a suitable subalgebra isomorphic to $\frsu(1,1)$ it is possible to prove the following:
\begin{prop}[{\cite[Theorems 6.2 to 6.5]{Ham2}}]\label{prop:hamlowrk}
For the homomorphisms of the low rank algebras into $\frsu(m,n)$ we have:
\begin{enumerate}
  \item[2.] Each tight irreducible homomorphism of $\frsu(1,1)\oplus \frsu(1,1)$ factors through one of the two factors.
  \item[3.] The only tight irreducible homomorphism of $\frsp(4,\R)$ is the standard representation. In particular it is either holomorphic or antiholomorphic.
  \item[4.] Each tight irreducible homomorphism of $\frsp(4,\R)\oplus \frsu(1,1)$ factors through one of the factors. In particular, they are holomorphic in the $\frsp(4,\R)$-factor.
  \item[5.]The only tight irreducible homomorphisms of $\frsu(1,2)$ is the standard representation and its dual. In particular they are either holomorphic or antiholomorphic.
 \end{enumerate}
\end{prop}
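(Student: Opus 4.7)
The common strategy across (2)--(5) is to exhibit inside each source algebra $\frg$ a tight positive embedding $\iota:\frsu(1,1)^s\hookrightarrow\frg$ whose composition with $\rho$ is still tight --- provided either by Lemma \ref{lem:discs} (for the reducible source algebras of (2) and (4)) or by a maximal polydisc (for (3) and (5)) --- and then to analyze $\rho\circ\iota$. By Lemma \ref{lem:1} and Weyl's complete reducibility the restriction decomposes as a direct sum of external tensor products $\rho^1_j\boxtimes\cdots\boxtimes\rho^s_j$, each of which can be handled using the classification of tight irreducible $\frsu(1,1)$-representations, which are precisely those of odd highest weight.

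For (2) the representation is already of the form $\rho=\rho^1\boxtimes\rho^2$. The invariant Hermitian form on $V_1\otimes V_2$ is the tensor product of the invariant forms on the $V_i$, whose signature $(p_i,q_i)$ satisfies $p_i=q_i$ when $\dim V_i$ is even and $|p_i-q_i|=1$ when $\dim V_i$ is odd. Restricting $\rho$ successively to each $\frsu(1,1)$ factor yields $\rho^*\k^b_{\frsu(m,n)}=\alpha_1\k^b_1+\alpha_2\k^b_2$ with $\alpha_1=(p_2-q_2)c_1$ and $\alpha_2=(p_1-q_1)c_2$, where $c_i$ is the Toledo invariant of $\rho^i$ and vanishes precisely when $\dim V_i$ is odd. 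Each coefficient is non-zero only when the two dimensions have opposite parities, and the tightness condition $|\alpha_1|+|\alpha_2|=\min(m,n)$ then forces one of the $\dim V_i$ to equal $1$, i.e.\ $\rho$ factors through a single $\frsu(1,1)$ summand.

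For (3) branch $\rho$ along the maximal polydisc $\iota:\frsu(1,1)^2\hookrightarrow\frsp(4,\R)$ arising from a symplectic splitting $\R^4=\R^2\oplus\R^2$: the standard representation branches as the direct sum of the two standard $\frsu(1,1)$-representations, while every other irreducible representation produces a branching in which at least one summand is a tensor product $\rho^1\boxtimes\rho^2$ with both factors of dimension $\geq 2$. Applying the pullback formula from (2) summand by summand shows that such ``non-factoring'' summands contribute zero to $\rho^*\k^b_{\frsu(m,n)}$, so the total norm of the pullback falls strictly below the rank of $\frsu(m,n)$, contradicting tightness of $\rho\circ\iota$. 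The standard representation is the defining inclusion $\frsp(4,\R)\hookrightarrow\frsu(2,2)$ and is thus holomorphic. For (4) write $\rho=\rho^1\boxtimes\rho^2$ via Lemma \ref{lem:1}: by (3) either $\rho^1$ is trivial or $\rho^1$ is the standard representation, and in the latter case the balanced signature $(2,2)$ together with the pullback formula of (2) forces $\rho^2$ to be trivial as well.

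For (5) $\frsu(1,2)$ has rank one, so $\rho^*\k^b_{\frsu(m,n)}=\alpha\,\k^b_{\frsu(1,2)}$ is a single scalar multiple and tightness reduces to $|\alpha|=\min(m,n)$. The highest weight classification of irreducible $\frsu(1,2)$-representations together with the signatures of their weight spaces expresses $\alpha$ as an explicit function of the highest weight $(p,q)$, and the equality $|\alpha|=\min(m,n)$ holds only for $(p,q)=(1,0)$ or $(0,1)$, namely the standard representation and its dual. The main obstacle is the combinatorial step in (3): one must track the evolution of Hermitian signatures through tensor products and direct sums sharply enough to rule out every non-standard irreducible representation of $\frsp(4,\R)$, rather than merely invoking the parity constraints used in (2).
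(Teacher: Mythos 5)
This proposition is not proved in the paper at all: it is quoted from \cite[Theorems 6.2--6.5]{Ham2}, the text only indicating the strategy (``by choosing, in each case, a suitable subalgebra isomorphic to $\frsu(1,1)$'' and the fact that an irreducible $\frsu(1,1)\to\frsu(p,q)$ is tight iff its highest weight is odd). Your proposal follows exactly this indicated strategy, and your treatment of (2) is correct and essentially complete: the signed-multiplicity formula $\alpha_1=(p_2-q_2)c_1$, $\alpha_2=(p_1-q_1)c_2$, together with $c_i=0$ for odd-dimensional factors and $|c_i|=\tfrac12\dim V_i$ for even-dimensional ones, does force one tensor factor to be trivial.

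The remaining items, however, contain genuine gaps. In (3) the decisive claim --- that every irreducible $\frsp(4,\R)$-representation other than the standard one branches over the polydisc $\frsu(1,1)^2$ with at least one summand $\rho^1\boxtimes\rho^2$ having both factors of dimension $\geq 2$ --- is asserted rather than proved, and you flag it yourself as the main obstacle; it is true (for highest weight $a\omega_1$, $a\geq 2$, use $\mathrm{Sym}^a(\C^2\oplus\C^2)\supset \C^2\boxtimes\mathrm{Sym}^{a-1}\C^2$; for highest weight $a\omega_1+b\omega_2$ with $b\geq1$ the extremal weight gives a summand of bidimension $(a+b+1,b+1)$), but without this (3), and hence (4), is not established. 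Moreover such summands do not ``contribute zero'' to the pullback: a $2\boxtimes 3$ block contributes $\pm1$; the correct mechanism is that tightness of the composition, via Proposition \ref{prop:2.1} and the chain $\min(m,n)\geq\sum_j\min(m_j,n_j)\geq\sum_j(|\alpha_1^j|+|\alpha_2^j|)\geq|\alpha_1|+|\alpha_2|$, forces every nondegenerate irreducible block to be tight, which a summand with both factors of dimension $\geq2$ violates by (2). In (4) you invoke (3) for $\rho^1$, but (3) applies only to \emph{tight} homomorphisms and tightness of $\rho^1$ is nowhere established (restricting a tight map to a source factor need not be tight); one must instead argue directly, e.g.\ with $p_i\geq q_i$, that $2|\alpha_1|+|\alpha_2|\leq q_1(p_2-q_2)+q_2(p_1-q_1)=\min(m,n)-2q_1q_2$, so tightness forces $q_1q_2=0$, hence one factor preserves a definite form and acts trivially, and only then does (3) apply to the other factor. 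Finally, (5) is not a proof but a restatement: ``$\alpha$ is an explicit function of the highest weight and equality holds only for $(1,0),(0,1)$'' is precisely what must be shown; the computation (or the branching argument obtained by restricting to a tight holomorphic $\frsu(1,1)\subset\frsu(1,2)$ and applying the odd-highest-weight criterion summand by summand) still needs to be supplied.
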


As anticipated in the introduction, essentially all tight homomorphism except the ones defined on the Poincar\'e disc are holomorphic. An important tool to make this statement precise, that is also needed in the formulation of Theorem \ref{thm:1.2}, is the notion of Hermitian hull:
\begin{defn}
Let $\rho:\frg_1\rightarrow \frg_2$ be a homomorphism between Hermitian Lie algebras. The \emph{Hermitian hull}, $\mathcal{H}(\rho)=\mathcal{H}(\rho,\frg_1)$, is the smallest holomorphically embedded subalgebra  $\frg\subset\frg_2$ that contains $\rho(\frg_1)$.
\end{defn}
\begin{lem}\label{lem:hulhol}
Let $\eta\circ\rho:\frg_1\rightarrow\frg_2\rightarrow\frg_3$ be homomorphisms such that $\eta$ is holomorphic and injective. Then  $\mathcal{H}(\eta\circ\rho)$ is isomorphic to $\mathcal{H}(\rho)$.
\end{lem}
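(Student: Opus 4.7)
The strategy is to show the stronger statement $\mathcal{H}(\eta\circ\rho)=\eta(\mathcal{H}(\rho))$ by a double-inclusion argument, after which injectivity of $\eta$ immediately gives the claimed isomorphism.

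For the inclusion $\mathcal{H}(\eta\circ\rho)\subseteq\eta(\mathcal{H}(\rho))$, I would observe that $\eta(\mathcal{H}(\rho))$ is itself a holomorphically embedded Hermitian subalgebra of $\frg_3$. Indeed, the inclusion $\mathcal{H}(\rho)\hookrightarrow\frg_2$ is holomorphic by definition, $\eta:\frg_2\to\frg_3$ is holomorphic by hypothesis, so the composition $\mathcal{H}(\rho)\to\frg_3$ is a holomorphic Lie algebra homomorphism; since $\eta$ is injective it restricts to an isomorphism onto its image, giving $\eta(\mathcal{H}(\rho))$ a Hermitian structure whose inclusion in $\frg_3$ is holomorphic. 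As $\eta(\mathcal{H}(\rho))$ plainly contains $\eta\circ\rho(\frg_1)$, minimality of $\mathcal{H}(\eta\circ\rho)$ forces the claimed inclusion.

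For the reverse inclusion, I would use the injectivity of $\eta$ to form $\mathfrak{h}:=\eta^{-1}(\mathcal{H}(\eta\circ\rho))$. From the first step we already know $\mathcal{H}(\eta\circ\rho)\subseteq\eta(\mathcal{H}(\rho))\subseteq\eta(\frg_2)$, so $\mathfrak{h}$ is a well-defined subalgebra of $\frg_2$ which obviously contains $\rho(\frg_1)$. The key point is that $\mathfrak{h}$ is holomorphically embedded in $\frg_2$: for $X\in\mathfrak{h}\cap\mathfrak{m}_{\frg_2}$ one computes $\eta(J_{\frg_2}X)=J_{\frg_3}\eta(X)$ using the holomorphicity of $\eta$, and the right-hand side lies in $\mathcal{H}(\eta\circ\rho)$ because $\mathcal{H}(\eta\circ\rho)$ is holomorphically embedded in $\frg_3$; therefore $J_{\frg_2}X\in\eta^{-1}(\mathcal{H}(\eta\circ\rho))=\mathfrak{h}$. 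Then minimality of $\mathcal{H}(\rho)$ yields $\mathcal{H}(\rho)\subseteq\mathfrak{h}$, so $\eta(\mathcal{H}(\rho))\subseteq\mathcal{H}(\eta\circ\rho)$.

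Combining the two inclusions, $\eta$ restricts to a Lie algebra isomorphism $\mathcal{H}(\rho)\to\mathcal{H}(\eta\circ\rho)$, which is what the lemma asserts. The only mildly delicate step is the holomorphicity check for $\mathfrak{h}$, and that amounts to unwinding the definition of a holomorphic homomorphism (compatibility with the complex structures on the $\mathfrak{m}$-parts of the Cartan decompositions); since $\eta$ maps $\mathfrak{m}_{\frg_2}$ into $\mathfrak{m}_{\frg_3}$ and intertwines $J_{\frg_2}$ with $J_{\frg_3}$, the preimage of a $J_{\frg_3}$-invariant subspace is automatically $J_{\frg_2}$-invariant, so no real obstacle arises.
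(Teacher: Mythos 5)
Your proof is correct and follows essentially the same route as the paper: both identify $\mathcal{H}(\eta\circ\rho)$ with $\eta(\mathcal{H}(\rho))$ by using that $\eta(\mathcal{H}(\rho))$ is holomorphically embedded and contains $\eta(\rho(\frg_1))$, and that preimages under the holomorphic injective map $\eta$ of holomorphically embedded subalgebras are again holomorphically embedded. The only difference is presentational — you argue by double inclusion while the paper phrases minimality as a contradiction — and your explicit $J$-invariance check is a welcome extra detail.
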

\begin{proof}
The subalgebra 
$\eta(\mathcal{H}(\rho))$, that is clearly isomorphic to $\mathcal{H}(\rho)$, is holomorphically embedded in $\frg_3$ and contains $\eta(\rho(\frg_1))$, thus we just need to show its minimality among all holomorphically embedded subalgebras. Suppose $\frg$ is a smaller subalgebra with this property, that means 
$$\eta(\rho(\frg_1))\subset\frg\subsetneq \eta(\mathcal{H}(\rho)).$$
We get, by the injectivity of $\eta$, that 
$$\rho(\frg_1)\subset\eta^{-1}(\frg)\subsetneq \mathcal{H}(\rho).$$
Moreover, since $\eta$ is holomorphic, $\eta^{-1}(\frg)$ is a holomorphically embedded subalgebra. This contradicts the definition of $\mathcal{H}(\rho)$, hence the result follows.
\end{proof}

\begin{lem}\label{lem:hul}
Let $\rho:\frg=\frg_1\oplus...\oplus\frg_n\rightarrow \frh_1\oplus...\oplus\frh_n=\frh$ be a homomorphism such that $\rho(\frg_i)\subset\frh_i$.
Then $\mathcal{H}(\rho,\frg)=\bigoplus_i\mathcal{H}(\rho|_{\frg_i},\frg_i)$.
\end{lem}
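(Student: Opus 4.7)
The plan is to prove the equality by a two-sided inclusion, establishing that $\mathcal{S}:=\bigoplus_i \mathcal{H}(\rho|_{\frg_i},\frg_i)$ is (a) a holomorphically embedded subalgebra of $\frh$ containing $\rho(\frg)$, and (b) contained in every such subalgebra. The key observation throughout is that the Cartan decomposition $\frh=\frk\oplus\frp$ respects the product structure $\frh=\bigoplus_i\frh_i$, and the complex structure $J$ on $\frp$ is the direct sum $J=\bigoplus_i J_i$ of the complex structures $J_i$ on the factors $\frp_i$. In this language, a subalgebra $\frl\subset\frh$ is holomorphically embedded precisely when $\frl\cap\frp$ is $J$-invariant.

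For (a), the containment $\rho(\frg)\subset\mathcal{S}$ is immediate from the hypothesis $\rho(\frg_i)\subset\frh_i$, which forces $\rho(\frg_i)=\rho|_{\frg_i}(\frg_i)\subset\mathcal{H}(\rho|_{\frg_i},\frg_i)$. For the holomorphic embedding, note that each Hermitian hull $\mathcal{H}(\rho|_{\frg_i},\frg_i)$ is holomorphically embedded in $\frh_i$, so $\mathcal{H}(\rho|_{\frg_i},\frg_i)\cap\frp_i$ is $J_i$-invariant; taking the direct sum shows $\mathcal{S}\cap\frp=\bigoplus_i(\mathcal{H}(\rho|_{\frg_i},\frg_i)\cap\frp_i)$ is $J$-invariant.

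For (b), let $\frk\subset\frh$ be any holomorphically embedded subalgebra containing $\rho(\frg)$. Each factor $\frh_i$ is itself holomorphically embedded in $\frh$ since $\frp_i$ is a $J$-invariant direct summand of $\frp$. Consequently $\frk\cap\frh_i$ is holomorphically embedded in $\frh$ (and hence in $\frh_i$), because the intersection of two $J$-invariant subspaces of $\frp$ is $J$-invariant. Moreover $\frk\cap\frh_i$ contains $\rho|_{\frg_i}(\frg_i)=\rho(\frg_i)$, which lies in $\rho(\frg)\cap\frh_i\subset\frk\cap\frh_i$. By the minimality defining the Hermitian hull in $\frh_i$, we conclude $\mathcal{H}(\rho|_{\frg_i},\frg_i)\subset\frk\cap\frh_i\subset\frk$ for every $i$, and summing these inclusions gives $\mathcal{S}\subset\frk$.

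The only mildly nontrivial point, which I would treat as the main (but minor) obstacle, is the verification that the properties \emph{holomorphically embedded in $\frh_i$} and \emph{holomorphically embedded in $\frh$} agree for subalgebras contained in $\frh_i$, and that this property is preserved by intersection with a holomorphically embedded factor. Both reduce to the compatibility of $J$ with the direct sum decomposition, so the argument is essentially formal once the Cartan and complex structures of $\frh$ are identified factor by factor.
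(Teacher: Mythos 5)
Your proposal is correct and follows essentially the same route as the paper: one inclusion because $\bigoplus_i\mathcal{H}(\rho|_{\frg_i},\frg_i)$ is a holomorphically embedded subalgebra of $\frh$ containing $\rho(\frg)$, the other by intersecting a holomorphically embedded subalgebra containing $\rho(\frg)$ with each factor $\frh_i$ and invoking minimality in $\frh_i$ --- the paper states the key step as ``the intersection of two holomorphically embedded subalgebras is holomorphically embedded,'' which is exactly what your $J$-invariance argument verifies. The only cosmetic point is that you use the same letter both for the compact part of the Cartan decomposition of $\frh$ and for the arbitrary holomorphically embedded subalgebra in step (b); rename one of them.
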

\begin{proof}
 Clearly the subalgebra $\bigoplus_i\mathcal{H}(\rho|_{\frg_i},\frg_i)$ is a holomorphically embedded subalgebra of $\frh$ containing $\rho(\frg)$, in particular $\mathcal{H}(\rho,\frg)<\bigoplus_i\mathcal{H}(\rho|_{\frg_i},\frg_i)$. Moreover it is easy to check that the intersection of two holomorphically embedded subalgebras is a holomorphically embedded subalgebra, hence $\calH(\rho,\frg)\cap \frh_i$ is a holomorphically embedded subalgebra of $\frh_i$ that contains $\rho|_{\frg_i}(\frg_i)$. This implies that, for each $i$, 
 $$\calH(\rho|_{\frg_i},\frg_i)<\calH(\rho,\frg)\cap \frh_i<\calH(\rho,\frg)$$
 and hence $\mathcal{H}(\rho,\frg)=\bigoplus_i\mathcal{H}(\rho|_{\frg_i},\frg_i)$.
\end{proof}

\section{Some branching arguments}\label{sec:branching}
The purpose of this section is to show how to exploit the structure of the algebras $\frsu(m,n)$ and $\frso^*(2p)$ to simplify the study of tight homomorphisms defined on reducible algebras. In Proposition \ref{prop:2.1} and \ref{prop:2.2} we show that  whenever a tight homomorphism $\rho:\frg\to\frh$ is fixed, where $\frh$ is either $\frsu(m,n)$ or $\frso^*(2p)$, 
it is possible to find a decomposition of the associated linear representation in irreducible modules that is compatible with the form defining the algebra $\frh$. Then, in Propositions \ref{prop:splitting2} and \ref{prop:splitting3}, we study such building blocks and use Proposition \ref{prop:hamlowrk} to show that they need to have a very simple form. 

\begin{prop}\label{prop:2.1}
 Let $\rho:\frg\to\frsu(m,n)$ be a tight homomorphism. Then there exists a decomposition $\C^{m+n}=V_1\oplus\ldots\oplus V_k$ consisting of nondegenerate irreducible, pairwise orthogonal subspaces. 
\end{prop}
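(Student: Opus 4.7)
The plan is to combine Weyl's theorem with the compact-centralizer property of tight maps recalled in the introduction. Since $\frg$ is reductive, Weyl's theorem gives an isotypic decomposition $\C^{m+n}=\bigoplus_{[W]}U_W$, where $[W]$ ranges over isomorphism classes of the irreducible $\rho(\frg)$-submodules of $\C^{m+n}$. As $h$ is $\rho(\frg)$-invariant, a Schur-type argument forces $h(U_W,U_{W'})=0$ unless $W'$ is isomorphic to the conjugate-dual $\ov W^*$. Hence the isotypic components fall into two classes: self-dual ones with $W\cong\ov W^*$, on which $h$ restricts non-degenerately, and pairs $(U_W,U_{\ov W^*})$ with $W\not\cong \ov W^*$, in which both components are totally isotropic and paired non-degenerately with each other.

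The crucial use of tightness is to exclude the second type. Suppose that a dual pair $(U_W,U_{\ov W^*})$ with $W\not\cong\ov W^*$ occurred. Then $\dim U_W=\dim U_{\ov W^*}$, and the endomorphism $A$ defined to act as $+\mathrm{Id}$ on $U_W$, as $-\mathrm{Id}$ on $U_{\ov W^*}$, and as $0$ on every other isotypic component would be traceless; would infinitesimally preserve $h$ (by the isotropy of $U_W$ together with the orthogonality of components that are not paired); and would commute with $\rho(\frg)$, since it acts as a scalar on each isotypic piece. Thus $A$ would be a nonzero element of the centralizer of $\rho(\frg)$ in $\frsu(m,n)$ with real non-zero eigenvalues, so that the one-parameter subgroup $\{e^{tA}\}$ would be a non-compact subgroup of $\SU(m,n)$ centralizing $\rho(G)$. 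This would contradict the Burger--Iozzi--Wienhard property recalled in the introduction, that the image of a tight homomorphism has compact centralizer.

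Assuming this, every $W$ occurring in the decomposition is self-dual, so $h|_{U_W}$ is non-degenerate for all $W$. The final step is to split each $U_W$ into pairwise orthogonal irreducible pieces. Writing $U_W\cong W\otimes M_W$ for the multiplicity space $M_W$, the restriction of $h$ to $U_W$ decomposes as the tensor product of a fixed invariant sesquilinear form on $W$ with a non-degenerate Hermitian form on $M_W$; selecting an orthogonal basis of $M_W$ then yields a direct-sum decomposition of $U_W$ into pairwise orthogonal, non-degenerate copies of $W$. Concatenating such decompositions across the isotypic components produces the desired $V_1\oplus\ldots\oplus V_k$. The main obstacle will be the careful linear-algebra verification that the operator $A$ above indeed lies in $\frsu(m,n)$ and centralizes $\rho(\frg)$; once this is in place, the remainder of the argument is a standard application of Schur's lemma inside each isotypic component.
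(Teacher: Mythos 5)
Your argument is correct, but it takes a genuinely different route from the paper. The paper proceeds by induction on the number of irreducible summands: in the problematic case where every irreducible summand is isotropic, it finds two isotropic irreducibles $V_1,V_j$ pairing nondegenerately, identifies the resulting subalgebra $(\frgl(V_1)\oplus\frgl(V_j)\oplus\frgl(W))\cap\frsu(m,n)\cong\frgl(k)\oplus\frsu(m-k,n-k)$ in an explicit basis, and contradicts tightness directly by estimating the norm of the pulled-back bounded K\"ahler class (it is at most $(m-k)\pi$, since $\frgl(k)$ is not Hermitian and the inclusion of $\frsu(m-k,n-k)$ is holomorphic and isometric). You instead pass to the full isotypic decomposition, use Schur orthogonality of $h$ between isotypic components, and rule out an isotropic dual pair $(U_W,U_{\ov W^*})$ by producing the traceless operator $A$ acting by $\pm\mathrm{Id}$ on the pair and $0$ elsewhere, which lies in $\frsu(m,n)$, centralizes $\rho(\frg)$ (hence, by connectedness, the image of the associated group homomorphism), and generates a noncompact one-parameter subgroup, contradicting the Burger--Iozzi--Wienhard compact-centralizer theorem for tight homomorphisms; the equality $\dim U_W=\dim U_{\ov W^*}$ needed for tracelessness indeed follows from the nondegenerate pairing you point out, and the final splitting of a selfdual isotypic component via the multiplicity space $W\otimes M_W$ and an orthogonal basis of $M_W$ is standard (after normalizing the invariant form on $W$ to be Hermitian rather than merely sesquilinear). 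The trade-off: your proof avoids the induction and the explicit matrix computation and is arguably cleaner, but it leans on the heavier structural theorem of \cite{tight} (reductive Zariski closure with compact centralizer), whereas the paper's proof stays self-contained within the elementary bounded-cohomological norm estimates already set up in Section 2 and yields the quantitative rank deficit used elsewhere. Both uses of tightness are legitimate and non-circular, so your proposal stands as a valid alternative proof.
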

\begin{proof}
We argue by induction on the number of irreducible subspaces $l$. The case $l=1$ is easily seen to be true.
Now assume $l>1$ and fix a decomposition $\C^{m+n}=V_1\oplus\ldots\oplus V_l$  into irreducible subspaces. 

Assume first that there exists an index $i$ such that the restriction of $h$ to $V_i$ is nondegenerate. Since the representation $\rho$ has values in $\frsu(m,n)$, the orthogonal complement of $V_i$ with respect to $h$, $V_i^\bot$, is a $\rho(\frg)$-module. The restriction of $h$ to $V_i^\bot$ is nondegenerate, otherwise the form $h$ itself would be degenerate. The $\rho(\frg)$-module $V_i^\bot$ admits, by induction on the number of irreducible subspaces in its decomposition, a $\rho(\frg)$-invariant orthogonal decomposition into irreducible $\rho(\frg)$-modules $V_i^\bot=V'_2\oplus\ldots\oplus V'_l$ where the restriction of $h$ to all $V_i$ is nondegenerate.  In turn $\C^{m+n}$ admits the desired splitting $V_i\oplus V'_2\oplus\ldots\oplus V'_l$, and we are done.

Next we assume that the restriction of $h$ to $V_i$ is degenerate for all $i$. We will argue that this contradicts the tightness assumption. If the restriction of $h$ to $V_i$ is degenerate it is easy to check that the restriction of $h$ to $V_i$ is identically zero: indeed the radical
 $$\rad(V_i)=\{v\in V_i|\; h(v,w)=0 ,\; \forall w\in V_i\}$$
is a nontrivial $\rho(\frg)$-invariant subspace that needs to equal the whole subspace by irreducibility.

 Our next goal is to show that, since $h$ is nondegenerate, there exists a module $V_j$ in the decomposition such that $h|_{V_1 \oplus V_j}$ is nondegenerate. 
Indeed, since $h$ is nondegenerate there exists a submodule $V_j$ such that $h|_{V_1 \oplus V_j}$ is nonzero, moreover for such a choice of $V_j$ we have $\rad(V_1 \oplus V_j)=\{0\}$: if, by contradiction, $a+b$ is an element in $\rad(V_1 \oplus V_j)$ with $a\in V_1$ and $b\in V_j$, one gets that $a$ is orthogonal to $V_j$ since $h(a,c)=h(a+b,c)=0$ for all $c\in V_j$. In particular, since $V_1$ is irreducible, one gets that $V_1$ and $V_j$ are orthogonal, and hence  the restriction of $h$ to $V_1 \oplus V_j$ vanishes.

Let us denote by $W$ the orthogonal complement of $V_1 \oplus V_j$, considered with respect to the form $h$. Since $(V_1 \oplus V_j)$ is $\rho(\frg)$-invariant, the subspace $W$ is as well. Moreover, if we denote by $k$ the dimension of $V_1$, it is easy to check that the signature of the restriction of $h$ to $V_1 \oplus V_j$ is $(k,k)$ and the restriction to $W$ has signature $(m-k,n-k)$.

Since the splitting is $\rho(\frg)$-invariant, we get that the image of $\rho$ is contained in $( \frgl(V_1)\oplus \frgl(V_j)\oplus \frgl(W))\cap \frsu(m,n)$. Our next aim is to show that this last group is equal to $\frgl(k)+\frsu(m-k,n-k)$. Since $h|_W$ has signature $(m-k,n-k)$ we get that $\frgl(W)\cap \frsu(m,n)=\frsu(m-k,n-k)$. We will now verify that $(\frgl(V_1)\oplus \frgl(V_j))\cap\frsu(k,k)$ is, in a suitable basis, the embedding of $\frgl(k)$ in $\frsu(k,k)$ defined by $X\mapsto \bsm X&\\&-X^*\esm$.
Let us fix any basis $\mathcal B_1$ of $V_1$. Since $V_1$ and $V_j$ are isotropic subspaces of $V_1 \oplus V_j$ and the restriction of $h$ to $V_1 \oplus V_j$ is nondegenerate, we can find a basis $\mathcal B_j$ of $V_j$ such that the matrix representing $h$ with respect to the basis $\mathcal B_1\cup \mathcal B_j$ is $\bsm 0&\Id\\\Id&0\esm$. When this basis is fixed it follows from the definition of the group $\frsu(k,k)$ that a pair $(X,Y)\in \frgl(V_1)\oplus\frgl(V_j)$ belongs to $\frsu(k,k)$ if and only if $Y=-X^*$.

However the inclusion $\iota=\iota_1+\iota_2:\frgl(k)\oplus\frsu(m-k,n-k)\to\frsu(m,n)$ cannot be tight: the group $\frgl(k)$ is not  Hermitian, hence $\iota_1^*\k^b_{\frsu(m,n)}=0$,  and the inclusion $\iota_2$ is holomorphic and isometric, thus $\iota_2^*\k^b_{\frsu(m,n)}=\k^b_{\frsu(m-k,n-k)}$. As $\rho$ factors via $\iota$ this implies that $\rho^*\k^b_{\frsu(m,n)}$ has norm at most $m-k$ which contradicts the tightness assumption.
\end{proof}
This readily implies the following:
\begin{cor}\label{cor:3.1}
 Any tight homomorphism $\rho:\frg\to\frsu(m,n)$ can be written as $\iota\circ\oplus\rho_i$ where $\rho_i:\frg\to \frsu(m_i,n_i)$ are tight homomorphisms whose associated representations are irreducible and $\iota$ corresponds to a holomorphic isometric embedding $\iota: \oplus \frsu(m_i,n_i)\to \frsu(m,n)$.
\end{cor}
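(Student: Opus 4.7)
The plan is to combine Proposition 3.1, which produces a $\rho(\frg)$-invariant orthogonal decomposition $\C^{m+n}=V_1\oplus\ldots\oplus V_k$ into nondegenerate irreducible subspaces, with the structural results on tight homomorphisms gathered in Proposition \ref{prop:tighteasy}.

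First, using the decomposition supplied by Proposition \ref{prop:2.1}, let $(m_i,n_i)$ denote the signature of the restriction $h|_{V_i}$ (nondegeneracy of $h|_{V_i}$ makes this well defined). Since each $V_i$ is $\rho(\frg)$-invariant and the decomposition is $h$-orthogonal, the image of $\rho$ is contained in $\bigoplus_i \frsu(m_i,n_i)$ sitting inside $\frsu(m,n)$ as the stabilizer of the splitting. I define $\rho_i:\frg\to\frsu(m_i,n_i)$ to be the component of $\rho$ acting on $V_i$; these are irreducible by construction, and by definition $\rho=\iota\circ\bigoplus_i\rho_i$ for the inclusion $\iota:\bigoplus\frsu(m_i,n_i)\to\frsu(m,n)$ associated to the orthogonal splitting.

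Next I would verify that $\iota$ is a tight, holomorphic and isometric embedding. Holomorphicity and the isometric property follow from the fact that each $\frsu(m_i,n_i)$ is embedded as the subalgebra preserving $V_i$ in the direct sum, which is a standard totally geodesic holomorphic Hermitian subalgebra inclusion. Consequently $\iota^*\k^b_{\frsu(m,n)}=\sum_i \k^b_{\frsu(m_i,n_i)}$, so the Gromov norm on the left equals $\sum_i m_i\pi=m\pi=\|\k^b_{\frsu(m,n)}\|_\infty$, which is tightness, and all coefficients are equal to $1>0$, which is strict positivity.

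Finally, to conclude that each $\rho_i$ is tight I apply Proposition \ref{prop:tighteasy}. Since $\rho=\iota\circ\bigoplus_i\rho_i$ is tight and $\iota$ is strictly positive, part (2) of the proposition gives that $\bigoplus_i\rho_i:\frg\to\bigoplus_i\frsu(m_i,n_i)$ is tight; then part (1), applied to this direct sum, yields that every $\rho_i$ is tight. No step should be technically difficult: the only subtlety is making sure that $\iota$ is genuinely strictly positive (not merely positive), so that Proposition \ref{prop:tighteasy}(2) applies and tightness of the composition transfers to tightness of $\bigoplus_i\rho_i$; this is ensured by the direct orthogonal nature of the splitting, which makes the pullback of the Kähler class a sum with strictly positive coefficients.
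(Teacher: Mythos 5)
Your argument is correct and essentially the paper's own: the corollary is treated there as an immediate consequence of Proposition \ref{prop:2.1} combined with the observation in Section \ref{sec:Preliminaries} that an invariant nondegenerate orthogonal splitting yields $\rho=\iota\circ\oplus\rho_i$, and the tightness of each $\rho_i$ is obtained exactly as you do (strict positivity of the holomorphic inclusion plus parts (2) and (1) of Proposition \ref{prop:tighteasy}, as the paper itself spells out inside the proof of Proposition \ref{prop:splitting2}). The only slip is immaterial: your computation $\|\iota^*\k^b_{\frsu(m,n)}\|_\infty=\sum_i m_i\pi$ tacitly assumes $m_i\le n_i$ for every $i$ (the rank of $\frsu(m_i,n_i)$ is $\min(m_i,n_i)$), but tightness of $\iota$ is neither asserted in the corollary nor needed in your final step, where only strict positivity is used.
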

We now turn to the study of tight homomorphisms with values in the group $\frso^*(2p)$. Using the same notation as in Section \ref{sec:Preliminaries}, we denote by $H$ the anti-Hermitian form on $\H^p$ with respect to which the group $\frso^*(2p)$ is defined, and we denote by $h$ on $\C^{2p}$ the Hermitian form that corresponds to the imaginary part of the form $H$.  It is well known that  the standard inclusion $\tau:\frso^*(2p)\to\frsu(p,p)$ corresponds to a holomorphic map and, given our normalization convention for the metrics, it is isometric up to a factor $2$. As in Lemma \ref{lem:2.4} we will denote by $J:\C^{2p}\to \C^{2p}$ the antiholomorphic isomorphism that corresponds to the right multiplication by $j$ on $\H^p$.
 
\begin{prop}\label{prop:2.2}
 Let $\rho:\frg\to\frso^*(2p)$ be a tight injective homomorphism. Then there exists a decomposition $\H^{p}=V_1\oplus\ldots\oplus V_l$ consisting of nondegenerate, pairwise $H$-orthogonal, $\rho(\frg)$-invariant subspaces, with the additional property that $V_i$ is either irreducible as complex representation or splits as the direct sum of two anti-isomorphic representations.
\end{prop}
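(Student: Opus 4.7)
The plan is to reduce to Proposition~\ref{prop:2.1} via the standard embedding $\tau:\frso^*(2p)\to\frsu(p,p)$ and then upgrade the resulting complex orthogonal decomposition of $\C^{2p}$ to one compatible with the quaternionic structure on $\H^p$.

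First I would note that $\tau$ is holomorphic and isometric up to a factor $2$, hence tight and strictly positive, so by Proposition~\ref{prop:tighteasy}(2) the composition $\tau\circ\rho:\frg\to\frsu(p,p)$ is tight. Applying Proposition~\ref{prop:2.1} to this composition yields a decomposition
\[
\C^{2p}=W_1\oplus\cdots\oplus W_k
\]
into $h$-nondegenerate, pairwise $h$-orthogonal, irreducible complex $\rho(\frg)$-submodules. Recall that the antilinear endomorphism $J:\C^{2p}\to\C^{2p}$ induced by right multiplication by $j\in\H$ commutes with $\rho(\frg)$. By Lemma~\ref{lem:2.4} each $W_i$ either satisfies $JW_i=W_i$, in which case $W_i$ is already a quaternionic submodule of $\H^p$, or satisfies $W_i\cap JW_i=\{0\}$ with $JW_i$ complex anti-isomorphic to $W_i$.

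I would then argue inductively on $\dim_\H\H^p$. Pick any $W_i$ from the decomposition above: in the quaternionic case set $V:=W_i$, which is complex-irreducible and thus of the desired form; in the non-quaternionic case set $V:=W_i\oplus JW_i$, a $J$-invariant complex subspace and therefore a quaternionic submodule of $\H^p$ which by construction splits as the sum of the two anti-isomorphic complex irreducibles $W_i$ and $JW_i$. In both situations $V$ is $H$-nondegenerate, since the $H$-radical of $V$ is a quaternionic $\rho(\frg)$-submodule, and its equaling $V$ would, in view of the decomposition $H=ih+jT$, force $h|_{W_i}=0$, contradicting Proposition~\ref{prop:2.1}. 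Passing to the $H$-orthogonal complement $V^{\perp_H}\subset\H^p$, the induced homomorphism $\frg\to\frso^*(2\dim_\H V^{\perp_H})$ is again tight by Proposition~\ref{prop:tighteasy} (using that the block-diagonal embedding $\frso^*(2a)\oplus\frso^*(2b)\hookrightarrow\frso^*(2a+2b)$ is tight and holomorphic, hence strictly positive), and applying the same procedure produces the next block. Finite dimensionality terminates the recursion.

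The main obstacle is the verification of $H$-nondegeneracy of $V$ and of pairwise $H$-orthogonality between successive blocks. When $W_i$ is not self-conjugate as a $\rho(\frg)$-module, Schur's lemma pins down the complex submodules of $V$ and the conclusion follows easily, but in the self-conjugate case one must exploit the freedom within the isotypic components of the decomposition produced by Proposition~\ref{prop:2.1} to choose a representative compatible with $J$. Unpacking the relationship between the $\C$-valued forms $h$ and $T$ encoding $H=ih+jT$ is what makes this compatibility available, and is the step requiring the most care.
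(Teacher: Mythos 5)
Your argument breaks down at the very first step: the claim that $\tau:\frso^*(2p)\to\frsu(p,p)$ is tight because it is holomorphic and isometric up to a factor $2$ is false when $p$ is odd. Tightness requires $\|\tau^*\k^b_{\SU(p,p)}\|_\infty=\|\k^b_{\SU(p,p)}\|_\infty=p\pi$, but $\tau^*\k^b_{\SU(p,p)}=2\k^b_{\SO^*(2p)}$ has norm $2\rk(\frso^*(2p))\pi=2\lfloor p/2\rfloor\pi$, which equals $p\pi$ only for $p$ even. Hence for odd $p$ the composition $\tau\circ\rho$ is never tight, Proposition \ref{prop:2.1} cannot be applied to it, and your starting decomposition of $\C^{2p}$ into $h$-nondegenerate irreducibles is not available. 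This is exactly the case the paper has to treat separately, and it cannot be wished away: when $\frg$ is of tube type one first invokes Proposition \ref{prop:tt} to push $\rho(\frg)$ into a maximal tube-type subalgebra $\frso^*(4s)$ (a nondegenerate quaternionic submodule), reducing to even $p$, where your $J$-saturation argument is essentially the paper's; but when $\frg$ is \emph{not} of tube type, injectivity and tightness force $\frso^*(2p)$ to be non-tube type, i.e.\ $p$ odd, and the paper must rerun the argument of Proposition \ref{prop:2.1} by hand with the factor $2$: if every irreducible complex summand were $h$-degenerate one gets a factorization through $\frgl(k)\oplus\frsu(p-k,p-k)$, and then $k\geq 2$ contradicts tightness by the norm count, while $k=1$ would yield a tight homomorphism of the non-tube-type algebra $\frg$ into the tube-type algebra $\frsu(p-1,p-1)$, contradicting Proposition \ref{prop:tt}. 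Your proposal contains no substitute for this analysis, so the non-tube-type case is a genuine gap, not a detail.

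A secondary point: you correctly flag that $H$-nondegeneracy of $V=W_i\oplus JW_i$ is not automatic when $W_i$ is self-conjugate (the radical could a priori be a proper ``diagonal'' submodule), but you leave this unresolved, whereas your main reduction already fails earlier as explained above. If you repair the proof along the paper's lines, note that in the even-$p$/tube-type situation nondegeneracy of $h|_{W_i}$ does give what is needed, and the induction on the orthogonal complement $V^{\perp_H}$ then runs as you describe.
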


\begin{proof}
 Let us first assume that $\frg$ is of tube type. As a consequence of Proposition \ref{prop:tt} we get that the image of $\rho$ is contained in a maximal tube type subdomain of $\frso^*(2p)$. It is well known that if $p$ is even $\frso^*(2p)$ is of tube type and if $p=2s+1$ is odd the maximal tube type subdomain of $\frso^*(4s+2)$  is $\frso^*(4s)$. This latter subalgebra of $\frso^*(4s+2)$ corresponds to a nondegenerate quaternionic submodule $V$ of $\H^{2s+1}$ of quaternionic dimension $2s$. In particular, by considering the $\rho(\frg)$-invariant nondegenerate orthogonal splitting $\H^p=V\oplus V^\bot$, we can restrict to the case when $p=2s$ is even.
 
Assuming $p=2s$ is even, we have that the inclusion $\tau:\frso^*(4s)\to \frsu(2s,2s)$ is tight and hence, as a consequence of Proposition \ref{prop:2.1}, we have that $\C^{4s}$ admits a decomposition into nondegenerate, irreducible $\tau\circ \rho(\frg)$-modules. Let $V$ be a complex module in such a decomposition and consider the span $V_1=\<V,JV\>$. The space $JV$ is an invariant module as well, this follows from that the representation $\tau\circ\rho$ is $\H$-linear. The space $V_1$ is thus a quaternionic submodule of $\H^{p}$, i.e. $\rho(\frg)$-invariant. Moreover the restriction of $H$ to $V_1$ is nondegenerate since the restriction of $h$ to $V$ was nondegenerate by assumption. By Lemma \ref{lem:2.4}, $V_1$ is either irreducible or splits as the direct sum of two anti-isomorphic modules.
 
Let $W$ denote the orthogonal complement of $V_1$ with respect to the form $H$. Since the representation $\rho$ has values in $\frso^*(2p)$ we get that $W$ is $\rho(\frg)$-invariant, and this concludes, by induction, the proof in the case $\frg$ is of tube type.
 
Next we consider the case where $\frg$ is not of tube type. From the assumption of tightness and Proposition \ref{prop:tt} it follows that $\frso^*(2p)$ is not of tube type either. The inclusion $\tau:\frso^*(2p)\to \frsu(p,p)$ is thus not tight. However, since $\tau$ corresponds to an holomorphic and, up to a factor 2, isometric map, we get that $\tau^*\k^b_{\SU(p,p)}=2\k^b_{\SO^*(2p)}$. 

We prove, again, the proposition by induction on the number of irreducible subspaces $l$ in the decomposition $\C^{2p}=V_1\oplus\ldots \oplus V_l$ (with respect to the representation $\tau\circ\rho$). If one of the modules $V_i$ is nondegenerate, we can apply the same trick as in the first part of the proof and construct a nondegenerate $\rho(\frg)$-invariant quaternionic module $V_i^\H$. In particular we can restrict to the orthogonal complement $(V_i^\H)^\bot$ and conclude the proof by induction.
 
 We are left to deal with the case in which each module in the decomposition $\C^{2p}=V_1\oplus\ldots\oplus V_l$ is degenerate with respect to the form $h$. In this case, following the lines of Proposition \ref{prop:2.1}, we get that we find two modules $V_1,V_i$ of complex dimension $k$ such that the restriction of $h$ to $V_1\oplus V_i$ is nondegenerate, and the same argument as in the proof of Proposition \ref{prop:2.1} implies that the representation $\tau\circ\rho$ can be written as $\iota\circ \rho_r$ where $\rho_r:\frg\to\frgl(k)\oplus \frsu(p-k, p-k)$ is a homomorphism and $\iota$ is an inclusion that is holomorphic and isometric in the $\frsu(p-k,p-k)$ factor.
 
If $k\geq 2$ this will contradict the assumption that $\rho$ is tight as:
\begin{align*} 
\|\rho^*2\k_{\SO^*(2p)}^b\|&=\|(\tau\circ\rho)^*\k^b_{\SU(p,p)}\|=\|(\iota\circ\rho_r)^*\k^b_{\SU(p,p)}\|\\
&\leq \|\iota^*\k^b_{\SU(p,p)}\|=\|\k^b_{\SU(p-k,p-k)}\|=p-k
\end{align*}
 
If instead $k=1$ we get that the representation $\pi_2\circ\rho_r:\frg\to\frsu(p-1,p-1)$ is tight since: 
\begin{align*}
\|(\pi_2\circ\rho_r)^* \k^b_{\SU(p-1,p-1)}\|&=\|\rho_r^* \iota^*\k^b_{\SU(p,p)}\| =\|(\tau\circ\rho)^*(\k^b_{\SU(p,p)})\|\\
&=\|\rho^*2(\k^b_{\SO^*(2p)})\|=\| 2\k^b_{\SO^*(2p)}\| =p-1.
\end{align*}
 But this contradicts Proposition \ref{prop:tt} as it would be a tight homomorphism from an algebra $\frg$ that is not of tube type to an algebra $\frsu(p-1,p-1)$ that is of tube type. This concludes the proof.
\end{proof}

Again an immediate application is the following:
\begin{cor}\label{cor:3.2}
Any tight homomorphism $\rho:\frg\to\frso^*(2p)$ can be written as $\iota\circ\oplus\rho_i$ where $\rho_i:\frg\to \frso^*(2p_i)$ are tight homomorphisms whose associated complex representations are either irreducible or direct sums of two anti-isomorphic representations and $\iota$ corresponds to a holomorphic isometric embedding.
\end{cor}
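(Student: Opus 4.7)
The plan is to derive this corollary directly from Proposition \ref{prop:2.2} and Proposition \ref{prop:tighteasy}. First I would apply Proposition \ref{prop:2.2} to the tight injective homomorphism $\rho:\frg\to\frso^*(2p)$ to obtain an orthogonal decomposition $\H^p=V_1\oplus\ldots\oplus V_l$ into nondegenerate, pairwise $H$-orthogonal, $\rho(\frg)$-invariant quaternionic submodules, each of which, viewed as a complex representation, is either irreducible or a direct sum of two anti-isomorphic irreducible representations. (If $\rho$ is not injective one can replace $\frg$ by $\rho(\frg)$, since tightness only depends on the image.) Setting $p_i=\dim_\H V_i$, the restriction of $H$ to $V_i$ is a nondegenerate anti-Hermitian quaternionic form, so one obtains homomorphisms $\rho_i:\frg\to\frso^*(2p_i)$.

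Next I would identify the inclusion $\iota$. Because the decomposition is $H$-orthogonal and each summand is $\rho(\frg)$-invariant, one gets a factorization $\rho=\iota\circ\bigoplus\rho_i$, where $\iota:\bigoplus_i\frso^*(2p_i)\to\frso^*(2p)$ is the block diagonal inclusion induced by the splitting. Since the splitting is $\H$-orthogonal and we are only regarding each $V_i$ as a right $\H$-module without altering its complex structure, this inclusion corresponds to a totally geodesic map obtained as the product of holomorphic isometric embeddings of the symmetric subspaces, so $\iota$ is holomorphic and isometric.

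Finally I would deduce the tightness of each $\rho_i$. Since $\iota$ is a holomorphic isometric embedding, one has $\iota^*\k^b_{\frso^*(2p)}=\sum_i\k^b_{\frso^*(2p_i)}$ with unit coefficients, and therefore
\[
\|\rho^*\k^b_{\frso^*(2p)}\|=\Bigl\|\bigl(\bigoplus_i\rho_i\bigr)^*\sum_i\k^b_{\frso^*(2p_i)}\Bigr\|.
\]
The tightness of $\rho$ together with the fact that the target has rank $\sum p_i$ forces the direct sum homomorphism $\bigoplus\rho_i$ to be tight, and then Proposition \ref{prop:tighteasy}(1) gives tightness of each individual $\rho_i$.

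There is no real obstacle here: the statement is a bookkeeping reformulation of Proposition \ref{prop:2.2}, with the only substantive points being the recognition that an orthogonal decomposition into nondegenerate quaternionic submodules produces a holomorphic isometric block inclusion, and the standard propagation of tightness through such an inclusion via Proposition \ref{prop:tighteasy}. The structural conclusion on each $\rho_i$ (irreducibility, or splitting as a sum of two anti-isomorphic complex representations) is inherited verbatim from Proposition \ref{prop:2.2}.
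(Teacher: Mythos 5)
Your argument is correct and coincides with the paper's intended derivation: the corollary is stated there as an immediate consequence of Proposition \ref{prop:2.2}, with the block inclusion $\iota$ holomorphic and isometric and the tightness of the factors obtained exactly as you do via Proposition \ref{prop:tighteasy}, so your proof follows the same route. The only slip is the phrase ``the target has rank $\sum p_i$'' (the rank of $\bigoplus_i\frso^*(2p_i)$ is $\sum_i\lfloor p_i/2\rfloor\le\lfloor p/2\rfloor$), but this is harmless: tightness of $\bigoplus_i\rho_i$ already follows from the chain $\|\kappa^b_{\frso^*(2p)}\|=\|(\bigoplus_i\rho_i)^*\iota^*\kappa^b_{\frso^*(2p)}\|\le\|\iota^*\kappa^b_{\frso^*(2p)}\|\le\|\kappa^b_{\frso^*(2p)}\|$, or directly from Proposition \ref{prop:tighteasy}(2) since the holomorphic isometric $\iota$ is strictly positive.
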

With Propositions \ref{prop:2.1} and \ref{prop:2.2} at our disposal we can now reduce tight homomorphisms $\rho:\frg_1\oplus\frg_2\to\frh$ to direct sums of homomorphisms defined on the factors:
\begin{prop}\label{prop:splitting2}
 Let $\rho:\frg_1\oplus\frg_2\to \frsu(m,n)$ be a tight homomorphism. Then there exists a subalgebra of $\frsu(m,n)$ of the form $\iota:\frsu(m_1,n_1)\oplus\frsu(m_2,n_2)\to\frsu(m,n)$ and tight homomorphisms $\rho_i:\frg_i\to\frsu(m_i,n_i)$ such that $\rho=\iota\circ(\rho_1\oplus\rho_2)$.
\end{prop}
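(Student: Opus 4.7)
The plan is to find an orthogonal decomposition $\C^{m+n}=V^{(1)}\oplus V^{(2)}$ such that $\rho(\frg_i)$ acts trivially on $V^{(3-i)}$; then $\iota$ will be the induced block-diagonal inclusion and $\rho_i$ the corresponding restriction. First apply Corollary \ref{cor:3.1} to write $\rho=\iota_0\circ\bigoplus_j\rho^j$, where each $\rho^j:\frg_1\oplus\frg_2\to\frsu(m_j,n_j)$ is tight with irreducible underlying linear representation. Since an irreducible representation of a direct sum of reductive algebras is an external tensor product, $\rho^j=\rho^j_1\boxtimes\rho^j_2$ with each $\rho^j_i:\frg_i\to\frgl(V^j_i)$ irreducible. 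The heart of the proof is the claim that for every $j$ at least one of $\rho^j_1,\rho^j_2$ is the trivial one-dimensional representation; granted this, setting $I_i=\{j:\rho^j_{3-i}\text{ is trivial}\}$, $V^{(i)}=\bigoplus_{j\in I_i}V^j$, and $\rho_i=\bigoplus_{j\in I_i}\sigma^j$ (where $\sigma^j:\frg_i\to\frsu(m_j,n_j)$ is the factor through which $\rho^j$ passes) gives the desired splitting, with $\iota$ the resulting block-diagonal inclusion.

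To establish the claim, suppose both $\rho^j_1$ and $\rho^j_2$ are nontrivial. Apply Lemma \ref{lem:discs} to the tight $\rho^j$ to obtain discs $d_i:\frsu(1,1)\to\frg_i$ with $\rho^j\circ(d_1\oplus d_2)$ tight. By Lemma \ref{lem:1} this composition decomposes as $\bigoplus_{a,b}\sigma^1_a\boxtimes\sigma^2_b$, the external tensor products of the irreducible $\frsu(1,1)$-summands of $\rho^j_i\circ d_i$. Applying Corollary \ref{cor:3.1} to this tight composition and using uniqueness (up to conjugacy in $\frgl$) of the decomposition of a semisimple representation into irreducibles, each $\sigma^1_a\boxtimes\sigma^2_b$ is itself a tight irreducible homomorphism of $\frsu(1,1)\oplus\frsu(1,1)$. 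Proposition \ref{prop:hamlowrk}(2) then forces each such tensor product to factor through one of the two $\frsu(1,1)$-factors, i.e.\ $\sigma^1_a$ or $\sigma^2_b$ is trivial. If both $\rho^j_1\circ d_1$ and $\rho^j_2\circ d_2$ had a nontrivial summand, their tensor would be a non-factoring irreducible; so some $\rho^j_i\circ d_i$ is the zero map.

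To upgrade $\rho^j_i\circ d_i=0$ to $\rho^j_i=0$, write $\frg_i=\bigoplus_k\frg_i^k$ into simple factors and $\rho^j_i=\boxtimes_k\rho^{j,k}_i$ accordingly; the construction in Lemma \ref{lem:discs} gives $d_i(x)=\sum_k f_i^k(x)$ for (anti)diagonal discs $f_i^k:\frsu(1,1)\to\frg_i^k$. The standard linear-algebra fact that $\sum_k\id\otimes\cdots\otimes A_k\otimes\cdots\otimes\id=0$ on $\bigotimes_k V^{j,k}_i$ forces each $A_k$ to be scalar; applied with $A_k=\rho^{j,k}_i(f_i^k(x))$, the image of $\rho^{j,k}_i\circ f_i^k$ lands in the abelian subalgebra of scalars, so perfectness of $\frsu(1,1)$ gives $\rho^{j,k}_i\circ f_i^k=0$, and simplicity of $\frg_i^k$ together with $f_i^k\neq 0$ forces $\rho^{j,k}_i=0$ for every $k$, hence $\rho^j_i=0$---the desired contradiction. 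Tightness of each $\rho_i$ then follows: each $\sigma^j$ is tight since $\sigma^j{}^*\k^b$ agrees with $\rho^j{}^*\k^b$ under the pullback by the projection $\frg_1\oplus\frg_2\to\frg_i$, and the triangle-inequality saturation forced by tightness of $\rho$ in $\mathrm{H}^2_{cb}(\frg_1\oplus\frg_2)=\mathrm{H}^2_{cb}(\frg_1)\oplus\mathrm{H}^2_{cb}(\frg_2)$ descends to saturation within each factor. The main obstacle I expect is precisely this upgrade step, since $d_i(\frsu(1,1))$ is a skew three-dimensional subalgebra meeting several simple factors of $\frg_i$ and does not obviously propagate triviality to each factor separately.
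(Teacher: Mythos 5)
Your argument is correct and follows essentially the same route as the paper: reduce via Corollary \ref{cor:3.1} to tight irreducible components, then use Lemma \ref{lem:discs}, Lemma \ref{lem:1} and Proposition \ref{prop:hamlowrk}(2) to show each irreducible component factors through one of $\frg_1,\frg_2$, and regroup. Your ``upgrade'' step (ruling out $\rho^j_i\circ d_i=0$ for nontrivial $\rho^j_i$) spells out a detail the paper leaves implicit, and your only loose phrasing --- transferring ``tightness'' across $\frgl$-conjugacy, which the paper warns is not well defined --- is harmless since the property you actually use, factoring through one factor, is conjugation-invariant.
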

\begin{proof}
 Using Corollary \ref{cor:3.1} we know that $\rho$ has values in a holomorphically and tightly embedded subalgebra of $\frsu(m,n)$ of the form $\frsu(m_1,n_1)\oplus\ldots\oplus\frsu(m_k,n_k)$, and splits as a direct sum of irreducible tight homomorphisms. Since the inclusion $\bigoplus\frsu(m_i,n_i)\to\frsu(m,n)$ is holomorphic, hence in particular strictly positive, we get, from the second fact in Proposition \ref{prop:tighteasy}, that the map $\rho:\frg_1\oplus\frg_2\to\bigoplus\frsu(m_i,n_i)$ is tight, and hence the composition $\rho_i$ of $\rho$ with the projection on $\frsu(m_i,n_i)$ is tight as well (as a consequence of the first fact of Proposition \ref{prop:tighteasy}). 
 
 In particular, in order to prove the proposition, it is enough to show that if $\rho:\frg_1\oplus\frg_2\to\frsu(m,n)$ is a tight homomorphism that is irreducible as a complex representation, then $\rho$ factors through one of the two factors. In turn this easily follows from the case 6.2 of Proposition \ref{prop:hamlowrk}: assume, by contradiction, that $\rho$ does not factor through one of the two factors, and let $d_i:\frsu(1,1)\to\frg_i$ be sums of diagonal or antidiagonal discs with the property that the composition $\rho|=\rho\circ(d_1\oplus d_2):\frsu(1,1)^2\to\frsu(m,n)$ is tight (cfr. Lemma \ref{lem:discs}). Using Proposition \ref{prop:2.1} and Lemma \ref{lem:1} we can write $\rho|$ as a sum $\oplus\rho_j$ of irreducible representations, of which at least one is non-tight. This gives a contradiction and concludes the proof. 
 \end{proof}
 
 A first useful Corollary of Proposition \ref{prop:splitting2} is the following, that complements the results of Proposition \ref{prop:hamlowrk}:
\begin{cor}\label{cor:ham}
 Let $\rho=\rho^{1}\boxtimes\rho^2:\frak{su}(1,2)\oplus\frak{su}(1,1)\to\frsu(m,n)$ be an irreducible homomorphism. If $\rho$ is tight, then either $\rho^1$ is trivial or $\rho^1$ is the standard representation and $\rho^2$ is trivial. This implies that every tight homomorphism $\rho:\frsu(1,2)\oplus\frsu(1,1)\to \frsu(m,n)$ is either holomorphic or antiholomorphic in the first factor.
\end{cor}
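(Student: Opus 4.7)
The plan is to reduce to Proposition \ref{prop:hamlowrk} (2) and (5) via the disc argument of Lemma \ref{lem:discs}, thereby probing the tensor product decomposition $\rho=\rho^1\boxtimes\rho^2$ by restriction to a product of copies of $\frsu(1,1)$.

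First I would apply Lemma \ref{lem:discs} to obtain discs $d_1:\frsu(1,1)\to\frsu(1,2)$ and $d_2:\frsu(1,1)\to\frsu(1,1)$ (each a holomorphic or antiholomorphic embedding, with $d_2$ in particular an isomorphism) such that the composition $\rho\circ(d_1\oplus d_2)$ is tight. Because $\rho=\rho^1\boxtimes\rho^2$, this composition equals $(\rho^1\circ d_1)\boxtimes(\rho^2\circ d_2)$. The factor $\pi_2:=\rho^2\circ d_2$ is irreducible ($d_2$ being an isomorphism), whereas $\rho^1\circ d_1$ may decompose as $\bigoplus_j \pi_1^j$ into irreducibles. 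By Lemma \ref{lem:1} the irreducible summands of the composition are the tensor products $\pi_1^j\boxtimes\pi_2$, and by Corollary \ref{cor:3.1} each of them is tight.

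Next I would invoke item (2) of Proposition \ref{prop:hamlowrk}: every tight irreducible homomorphism of $\frsu(1,1)\oplus\frsu(1,1)$ factors through one of the two factors. This forces, for each $j$, that either $\pi_1^j$ is trivial or $\pi_2$ is trivial. A case analysis then finishes the argument. If $\pi_2$ is trivial, then $\rho^2$ itself is trivial (since $d_2$ is an isomorphism), and the irreducibility of $\rho$ forces $\dim V_2=1$; the resulting map is an irreducible tight homomorphism of $\frsu(1,2)$ into $\frsu(m,n)$, so by item (5) of Proposition \ref{prop:hamlowrk} it must be the standard representation or its dual. If $\pi_2$ is nontrivial, then every $\pi_1^j$ is trivial, so $\rho^1\circ d_1=0$; the kernel of $\rho^1$ is an ideal of the simple Lie algebra $\frsu(1,2)$ containing the nonzero subalgebra $d_1(\frsu(1,1))$, hence it must equal all of $\frsu(1,2)$, and $\rho^1$ is trivial.

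For the last sentence of the statement, I would decompose an arbitrary tight homomorphism $\rho:\frsu(1,2)\oplus\frsu(1,1)\to\frsu(m,n)$ into irreducible tight summands via Corollary \ref{cor:3.1} and apply the first part of the corollary to each summand: on every summand the restriction to $\frsu(1,2)$ is either trivial, the standard representation, or its dual, hence holomorphic or antiholomorphic. The step I anticipate to be most delicate is the simplicity argument closing the second case: one must use that $d_1(\frsu(1,1))$ is genuinely nonzero as a subalgebra, so that triviality of $\rho^1\circ d_1$ really upgrades to triviality of $\rho^1$ via the ideal structure of $\frsu(1,2)$.
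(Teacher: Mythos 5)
Your proposal is correct and takes essentially the same route as the paper: the paper's proof simply cites Proposition \ref{prop:splitting2} --- whose own proof is exactly the disc/tensor-product argument you spell out via Lemma \ref{lem:discs}, Lemma \ref{lem:1}, Corollary \ref{cor:3.1} and case 2 of Proposition \ref{prop:hamlowrk} --- to conclude that an irreducible tight homomorphism factors through one of the two factors, and then invokes case 5 of Proposition \ref{prop:hamlowrk}, just as you do. In effect you have inlined the proof of Proposition \ref{prop:splitting2} for this special case rather than citing it; there is no gap.
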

\begin{proof}
 It follows from Proposition \ref{prop:splitting2} that the homomorphism factors through one of the two factors. The fact that the only irreducible tight homomorphism of $\frsu(1,2)$ is associated to the standard representation is case 5 of Proposition \ref{prop:hamlowrk}.
\end{proof}
A proposition similar in spirit to Proposition \ref{prop:splitting2} for homomorphisms with values in $\frso^*(2p)$ is the following:
\begin{prop}\label{prop:splitting3}
 Let $\rho:\frg_1\oplus\frg_2\to \frso^*(2p)$ be a tight homomorphism. Then there exists a subalgebra of $\frso^*(2p)$ of the form $\iota:\frso^*(2p_1)\oplus\frso^*(2p_2)\to \frso^*(2p)$ and tight homomorphisms $\rho_i:\frg_i\to\frso^*(2p_i)$ such that $\rho=\iota\circ (\rho_1\oplus\rho_2)$.
\end{prop}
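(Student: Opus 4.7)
My plan is to adapt the template of Proposition \ref{prop:splitting2}, using Corollary \ref{cor:3.2} in place of Corollary \ref{cor:3.1}. By Corollary \ref{cor:3.2} the homomorphism $\rho$ factors as $\iota_0\circ\bigoplus_s\rho_s$, where each $\rho_s\colon\frg_1\oplus\frg_2\to\frso^*(2q_s)$ has complex representation either irreducible or a direct sum of two anti-isomorphic irreducibles, and $\iota_0$ is a holomorphic isometric embedding. Strict positivity of $\iota_0$ together with Proposition \ref{prop:tighteasy} implies that each $\rho_s$ is tight, so it suffices to prove that every such building block factors through $\frg_1$ or $\frg_2$.

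Fix one such $\rho_s$, rename it $\rho\colon\frg_1\oplus\frg_2\to\frso^*(2q)$, and assume toward a contradiction that $\rho$ does not factor. Using Lemma \ref{lem:discs} choose discs $d_j\colon\frsu(1,1)\to\frg_j$ so that $\rho\circ(d_1\oplus d_2)\colon\frsu(1,1)^2\to\frso^*(2q)$ is tight, and apply Proposition \ref{prop:2.2} to this composition to write it as $\iota\circ\bigoplus_k\phi_k$, with each $\phi_k\colon\frsu(1,1)^2\to\frso^*(2q_k)$ tight and the corresponding quaternionic summand $V_k$ either complex-irreducible or a direct sum of two anti-isomorphic complex irreducibles. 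In both cases $V_k$ is irreducible as an $\H$-module: in the first case directly, and in the second case because the two anti-isomorphic complex pieces $V$ and $JV$ are exchanged by $J$ and hence are not individually $\H$-invariant.

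The main obstacle is that the standard embedding $\tau\colon\frso^*(2p)\to\frsu(p,p)$ preserves tightness only when $p$ is even, so one cannot directly reduce to Proposition \ref{prop:splitting2} by composing with $\tau$. I would first show that every $q_k$ is even: if $q_k$ were odd then $\frso^*(2q_k)$ would not be of tube type, and since $\frsu(1,1)^2$ is of tube type Proposition \ref{prop:tt}(1) would force $\phi_k$ to take values in the maximal tube type subalgebra $\frso^*(2q_k-2)\subset\frso^*(2q_k)$; this subalgebra stabilizes a proper $\H$-subspace of $V_k$, contradicting $\H$-irreducibility of $V_k$. With all $q_k$ even, each $\tau\circ\phi_k\colon\frsu(1,1)^2\to\frsu(q_k,q_k)$ is tight, and Corollary \ref{cor:3.1} ensures that every complex irreducible summand appearing in its decomposition is itself tight. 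Finally, since $\rho$ does not factor, every complex irreducible summand of $\tau\circ\rho$ can be written as a tensor product $\sigma_1\boxtimes\sigma_2$ of two nontrivial irreducible $\frg_j$-representations; decomposing $\sigma_j\circ d_j=\bigoplus_{l_j}\mu_j^{l_j}$ into $\frsu(1,1)$-irreducibles and applying Lemma \ref{lem:1} yields a summand $\mu_1^{l_1}\boxtimes\mu_2^{l_2}$ in which both $\mu_j^{l_j}$ are nontrivial (using that each $\sigma_j\circ d_j$ is a nontrivial $\frsu(1,1)$-representation and therefore contains a nontrivial irreducible piece). Proposition \ref{prop:hamlowrk}(2) shows that such a summand is not tight in its $\frsu$-image, contradicting the tightness of every irreducible piece of $\tau\circ\rho\circ(d_1\oplus d_2)=\bigoplus_k\tau\circ\phi_k$ established above.
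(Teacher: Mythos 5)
Your overall architecture (reduce via Corollary \ref{cor:3.2}, compose with the discs of Lemma \ref{lem:discs}, and contradict Proposition \ref{prop:hamlowrk}(2) through Lemma \ref{lem:1}) is essentially the paper's, but the step where you diverge from it --- the claim that every $q_k$ is even --- contains a genuine gap, and in fact the claim is false as stated. First, your justification of $\H$-irreducibility of $V_k$ in the anti-isomorphic case only rules out $V$ and $JV$ themselves as quaternionic submodules; it does not exclude ``graph'' submodules $\{(v,\phi v)\}\subset V\oplus JV$, which are $J$-invariant precisely when the irreducible module $V$ carries an invariant quaternionic structure, so $V_k$ need not be $\H$-irreducible in general. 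Second, and more seriously, when $q$ is odd the tight map $\rho\circ(d_1\oplus d_2)$ is defined on the tube-type algebra $\frsu(1,1)^2$, so by Proposition \ref{prop:tt} its image lies in $\frso^*(2q-2)$, and the decomposition of Proposition \ref{prop:2.2} then necessarily contains a block of quaternionic dimension one on which the action is trivial. For such a block your tube-type argument produces no contradiction (the maximal tube-type subalgebra is zero and the ``stabilized proper $\H$-subspace'' is the zero subspace), and no contradiction should exist, since such blocks really occur; hence ``every $q_k$ is even'' is not provable.

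The damage is local and repairable. For odd $q_k\geq 3$ one can argue as follows: every proper nonzero complex submodule of $V_k$ has complex dimension $q_k$ (clear when $V_k$ is complex-irreducible; when $V_k=V\oplus JV$ the submodules are either $0$, $V_k$, or of dimension $\dim_\C V=q_k$, also in the isotypic case $V\cong JV$), while any $J$-invariant subspace has even complex dimension; so $V_k$ is $\H$-irreducible whenever $q_k$ is odd, and your tube-type argument then excludes nontrivial odd blocks. The remaining odd blocks have $q_k=1$ and carry the trivial representation, and they are harmless: the offending constituent $\mu_1^{l_1}\boxtimes\mu_2^{l_2}$ with both factors nontrivial lies in a nontrivial, hence even, block, where $\tau$ is tight and Corollary \ref{cor:3.1} applies, so your final contradiction still goes through. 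Alternatively you can avoid the blockwise evenness discussion altogether, as the paper does: for $q$ even apply Proposition \ref{prop:splitting2} directly to $\tau\circ\rho$ (treating the two anti-isomorphic summands together, since anti-isomorphic summands factor through the same $\frg_i$), and for $q$ odd first cut the image of $\rho\circ(d_1\oplus d_2)$ down to the maximal tube-type subalgebra $\frso^*(2q-2)$ via Proposition \ref{prop:tt} and only then compose with the tight holomorphic inclusion into $\frsu(q-1,q-1)$.
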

\begin{proof}
The same argument of the proof of Proposition \ref{prop:splitting2}, using Corollary \ref{cor:3.2} instead of Corollary \ref{cor:3.1},  implies that we can assume that 
$\rho:\frg\to\frso^*(2p)$ is tight and the composition $\tau\circ \rho:\frg\to\frsu(p,p)$ is either irreducible or splits as direct sum of two anti-isomorphic representations. We need to show that  in this case $\rho$ factors through one of the factors.

Let us first assume that $p$ is even. In this case the inclusion $\tau:\frso^*(2p)\to \frsu(p,p)$ is tight and holomorphic. If $\tau\circ\rho$ is irreducible, it follows directly from Proposition \ref{prop:splitting2} that $\tau\circ\rho$, and hence $\rho$ itself, factors through one of the two factors $\frg_i$. If instead $\tau\circ\rho$ is a direct sum of two anti-isomorphic representations, we deduce from Proposition \ref{prop:splitting2} that each of those representations factors through one of the two factors $\frg_i$, that needs to be the same for both since the representations are anti-isomorphic. This concludes the proof in the first case.  

Let us now assume that $p=2k+1$ is odd and consider the diagram
 $$\xymatrix{&t_1:\frg_1\oplus\frg_2\ar[r]^-\rho&\frso^*(4k+2)\ar[r]_-{\text{NT}}^-\tau&\frsu(2k+1,2k+1)\\
 &t_2:\frsu(1,1)\oplus\frsu(1,1)\ar[r]^-{\rho|}\ar[u]^d&\frso^*(4k)\ar[u]_{i_2}\ar[r]^{\tau|}&\frsu(2k,2k)\ar[u]^{\text{NT}}_{i_3}.}$$
 
 Here, again, the map $d$ is chosen in such a way that $\rho\circ d$ is tight, this is possible by Lemma \ref{lem:discs}. 
 The homomorphism $\rho|$ is then the composition of $\rho\circ d$, and its image  is contained in $\frso^*(4k)$, a maximal tube type subdomain of $\frso^*(4k+2)$, as a consequence of Proposition \ref{prop:tt}.
 
We get from the case 2 of Proposition \ref{prop:hamlowrk} that the decomposition of $\tau|\circ\rho|$ in irreducible representations contains only homomorphisms factoring through one of the two factors. Using Lemma \ref{lem:1} we get that this is only possible if each irreducible factor in the decomposition of the representation associated to the homomorphism $\tau\circ\rho$ itself was factoring through one of the two factors. Since, by hypothesis, there are at most two anti-isomorphic irreducible factors in the decomposition of $\tau\circ\rho$, we get that $\tau\circ\rho$ factors through one of the factors, and the same clearly holds for $\rho$. This concludes the proof.
 \end{proof}
We get the following as a corollary:
\begin{cor}\label{cor:so*}
  Let $\rho:\frsu(1,2)\oplus\frsu(1,1)\to \frso^*(4p+2)$ be a tight homomorphism. Then either $\rho$ factors through a homomorphism of $\frsu(1,1)$ or it splits as the direct sum of the tight homomorphisms $\rho_1\oplus\rho_2:\frsu(1,2)\oplus\frsu(1,1)\to\frso^*(6)\oplus\frso^*(4(p-1))$. In each case it is holomorphic in the first factor.
\end{cor}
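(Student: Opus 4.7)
The plan is to combine Proposition \ref{prop:splitting3} with a branching argument inspired by its proof, in order to pin down the form of the restriction of $\rho$ to the $\frsu(1,2)$-factor.

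First I apply Proposition \ref{prop:splitting3} to split $\rho = \iota \circ (\tilde\rho_1 \oplus \tilde\rho_2)$, where $\tilde\rho_1:\frsu(1,2)\to\frso^*(2p_1)$ and $\tilde\rho_2:\frsu(1,1)\to\frso^*(2p_2)$ are tight and $\iota:\frso^*(2p_1)\oplus\frso^*(2p_2)\to\frso^*(4p+2)$ is the standard holomorphic block inclusion. If $\tilde\rho_1$ is trivial, $\rho$ factors through the $\frsu(1,1)$-factor and is trivially holomorphic on $\frsu(1,2)$, giving the first alternative. Otherwise $\tilde\rho_1$ is injective, and since $\frsu(1,2)$ is not of tube type, Proposition \ref{prop:tt}(2) forces $\frso^*(2p_1)$ not to be of tube type either, so $p_1=2k+1$ is odd with $k\ge 1$.

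Next I apply Corollary \ref{cor:3.2} to further decompose $\tilde\rho_1 = \iota' \circ \bigoplus_j \rho_1^j$, with each $\rho_1^j:\frsu(1,2)\to\frso^*(2q_j)$ tight, nontrivial, and of complex type either irreducible or a direct sum of two anti-isomorphic irreducibles; the same tube-type obstruction gives each $q_j$ odd. The key step, and principal obstacle, is to show that each $\rho_1^j$ is the standard embedding into $\frso^*(6)$. I would do this via a diagram argument modelled on the proof of Proposition \ref{prop:splitting3}: choose a diagonal disc $d:\frsu(1,1)\to\frsu(1,2)$; a computation using $d^*\k^b_{\frsu(1,2)}=\k^b_{\frsu(1,1)}$ shows that $\rho_1^j\circ d$ is tight; by Proposition \ref{prop:tt}(1) its image lies in the maximal tube-type subalgebra $\frso^*(4k_j)\subset\frso^*(4k_j+2)$, with $q_j=2k_j+1$; post-composing with the tight holomorphic map $\tau|:\frso^*(4k_j)\to\frsu(2k_j,2k_j)$ yields a tight homomorphism $\frsu(1,1)\to\frsu(2k_j,2k_j)$, which by Corollary \ref{cor:3.1} together with the odd-highest-weight criterion for tight irreducible representations of $\frsu(1,1)$ into $\frsu$ decomposes into irreducible summands of odd highest weight. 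Lifting back, every irreducible component of $\tau\circ\rho_1^j$ must, when restricted to $\frsu(1,1)$ via $d$, produce only $\frsu(1,1)$-summands of highest weight $0$ or odd. A branching rule computation for $\frsu(3)$-representations restricted to $\frsu(2)$ then shows that the only irreducible representations of $\frsu(1,2)$ satisfying this constraint are the trivial, standard, and dual representations, as any other irreducible already produces a weight-$2$ summand upon restriction.

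Since $2q_j$ is even, the irreducible case of Corollary \ref{cor:3.2} (which would force $2q_j=3$) is impossible, so $\tau\circ\rho_1^j$ must be the direct sum of the standard and dual representations of $\frsu(1,2)$, yielding $q_j=3$ and identifying $\rho_1^j$ with the standard holomorphic embedding $\frsu(1,2)\hookrightarrow\frso^*(6)\cong\frsu(3,1)$. Finally I rule out multiple summands by a direct tightness count: $s$ copies of the standard embedding give $\tilde\rho_1^*\k^b_{\frso^*(2p_1)}=s\,\k^b_{\frsu(1,2)}$ of norm $s\pi$, which by tightness of $\tilde\rho_1$ must equal $k\pi$, so $s=k$; combined with $p_1=3s=2k+1$ this forces $s=k=1$, hence $p_1=3$ and $p_2=2(p-1)$. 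This yields the claimed splitting $\rho_1\oplus\rho_2:\frsu(1,2)\oplus\frsu(1,1)\to\frso^*(6)\oplus\frso^*(4(p-1))$, with $\rho_1$ holomorphic.
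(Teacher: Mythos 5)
Your argument is correct in substance, but it reaches the crucial step by a genuinely different route than the paper. The paper's proof applies Proposition \ref{prop:2.2} to place the image of $\rho$ in a block subalgebra $\frso^*(2p_1)\oplus\ldots\oplus\frso^*(2p_k)$, notes by a rank-parity count that tightness of this inclusion forces all $p_i$ but at most one to be even, uses Proposition \ref{prop:splitting3} to send the non-tube-type factor $\frsu(1,2)$ into the unique odd block, and then quotes holomorphicity of tight maps from irreducible domains (\cite{Ham2,Ham3}) together with the classification of tight holomorphic maps in \cite{Ham1} to force that block to be $\frso^*(6)$. You replace the citation step by a self-contained branching argument: restrict along a diagonal disc $d:\frsu(1,1)\to\frsu(1,2)$, pass through the maximal tube-type subalgebra and $\tau|$, apply Corollary \ref{cor:3.1} with the odd-highest-weight criterion, and then use the interlacing branching rule for $\frsl(3)\downarrow\frsl(2)$ to see that every irreducible constituent of $\tau\circ\rho_1^j$ is trivial, standard or dual; the parity of $2q_j$ and the anti-isomorphic-pair alternative of Corollary \ref{cor:3.2} then pin down the block $\frso^*(6)$, and a final tightness count excludes multiple blocks. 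This buys independence from the lists of \cite{Ham1}, at the cost of length; note also that once you know every nontrivial $q_j$ is odd, tightness of the block inclusion already forces a single nontrivial block for rank reasons (this is exactly the paper's parity argument), so your last count could be shortened.

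Two points should be tightened. First, you assert without justification that each $\rho_1^j$ is nontrivial: a priori the decomposition can contain trivial summands with values in compact factors $\frso^*(2)$, and your identity $p_1=3s$ presupposes there are none; your tightness count does in fact eliminate them, but this should be said. Second, the phrase ``identifying $\rho_1^j$ with the standard holomorphic embedding'' from the linear representation type alone is too quick: as the paper itself warns, tightness and holomorphy are not determined by the conjugacy class in $\frgl(\C^{2q_j})$, so your argument as written only yields that $\rho$ is holomorphic or antiholomorphic in the first factor. The paper obtains the holomorphicity clause from the cited results of \cite{Ham2,Ham3} before invoking \cite{Ham1}; you would need either that input or an extra argument (e.g.\ positivity normalization) to close this last clause of the statement.
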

\begin{proof}
 If we consider the decomposition of $\rho$ in irreducible representations given by Proposition \ref{prop:2.2} we get that the image of $\rho$ is contained in a holomorphically and isometrically embedded subalgebra $\frh$ of the form $\frso^*(2p_1)\oplus\ldots\oplus\frso^*(2p_k)$. If the homomorphism $\rho$ is tight, in particular the inclusion of $\frh$ in $\frso^*(4p+2)$ needs to be tight, and hence all $p_i$ but at most one are even, for rank reasons. Since we know from Proposition \ref{prop:splitting3} that each irreducible homomorphism factors either through $\frsu(1,1)$ or $\frsu(1,2)$, the latter can happen only if one of the $p_i$ is odd. Since in that case the homomorphism $\rho:\frsu(1,2)\to\frso^*(4k+2)$ is holomorphic, we get from \cite{Ham1} that $k=1$. This concludes the proof.
\end{proof}
We finish the section with a lemma that will be useful in the next section.
\begin{lem}\label{lem:e}
 Let $\rho:\frsu(1,1)^2\to\fre_{6(-14)}$ be an injective tight positive homomorphism. Then $\rho$ is the inclusion of a polydisc, in particular it is holomorphic.
\end{lem}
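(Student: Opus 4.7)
The plan is to reduce the problem to a classical tube-type target via Proposition~\ref{prop:tt} and then to apply the branching machinery and low-rank classification already developed in this section.

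Since $\frsu(1,1)^{2}$ is of tube type, Proposition~\ref{prop:tt}(1) provides a maximal tube-type subalgebra $\frh\subseteq\fre_{6(-14)}$ containing $\rho(\frsu(1,1)^{2})$. Up to conjugation $\frh$ is isomorphic to $\frso(2,8)$, the maximal tube-type subdomain of $\fre_{6(-14)}$, and the inclusion $\iota:\frh\hookrightarrow\fre_{6(-14)}$ is holomorphic and tight. Proposition~\ref{prop:tighteasy}(2) then gives that the factored homomorphism $\rho_{0}:\frsu(1,1)^{2}\to\frh\cong\frso(2,8)$ is injective, positive and tight.

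Composing with the standard tight holomorphic embedding $\tau:\frso(2,8)\hookrightarrow\frsu(2,8)$, the homomorphism $\tau\circ\rho_{0}$ is tight into $\frsu(2,8)$, and Proposition~\ref{prop:splitting2} yields tight homomorphisms $\rho_{i}:\frsu(1,1)\to\frsu(m_{i},n_{i})$ with $m_{1}+m_{2}=2$, $n_{1}+n_{2}=8$, such that $\tau\circ\rho_{0}$ factors as the direct sum $\rho_{1}\oplus\rho_{2}$ composed with a holomorphic isometric inclusion. Since each $\rho_{i}$ is tight and injective from the non-compact $\frsu(1,1)$, its image is non-compact, so $\min(m_{i},n_{i})\geq 1$; combined with $m_{1}+m_{2}=2$ this forces $(m_{i},n_{i})=(1,4)$ for both $i$. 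Each $\rho_{i}:\frsu(1,1)\to\frsu(1,4)$ is therefore a tight homomorphism into a rank-one target, so $\rho_{i}^{*}\k^{b}_{\frsu(1,4)}=\k^{b}_{\frsu(1,1)}$. The classification of irreducible tight homomorphisms of $\frsu(1,1)$ recalled before Proposition~\ref{prop:hamlowrk}, together with this coefficient constraint, forces each $\rho_{i}$ to be the direct sum of the standard $\frsu(1,1)$-representation and a three-dimensional trivial summand---that is, the embedding of one factor of a standard polydisc of $\frso(2,8)$. Hence $\rho_{0}$ is conjugate to the polydisc inclusion of $\frsu(1,1)^{2}$ into $\frso(2,8)$, and composing with $\iota$ identifies $\rho$ with a polydisc inclusion of $\fre_{6(-14)}$; in particular $\rho$ is holomorphic.

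The principal obstacle is verifying that the image of $\tau\circ\rho_{0}$ in $\frsu(2,8)$ lies in $\frso(2,8)$ in the polydisc fashion, rather than in some alternative configuration of $\frsu(1,4)\oplus\frsu(1,4)\subset\frsu(2,8)$ incompatible with the real structure defining $\frso(2,8)$. This compatibility is forced by the fact that we start with a homomorphism into $\frso(2,8)$, so the summands $\rho_{i}$ automatically preserve the appropriate real form, and once this is checked the remainder of the argument is bookkeeping in rank and signature.
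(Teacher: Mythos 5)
There is a genuine gap at the central step: the map you call ``the standard tight holomorphic embedding $\tau:\frso(2,8)\hookrightarrow\frsu(2,8)$'' does not exist. The standard (defining) embedding $\frso(2,8)\subset\frsu(2,8)$ is neither holomorphic nor tight: on the tangent space at the base point it identifies $\frp_{\frso(2,8)}$ with the \emph{real} $2\times 8$ matrices inside $M_{2,8}(\C)$, the ambient complex structure (multiplication by $i$) does not preserve this subspace, and the ambient K\"ahler form $\mathrm{Im}\,\mathrm{tr}(ZW^*)$ restricts to zero on it. So the pullback of $\k^b_{\SU(2,8)}$ under this embedding vanishes, and the composition $\tau\circ\rho_0$ is as far from tight as possible; Proposition \ref{prop:tighteasy}(2) cannot be invoked, and Proposition \ref{prop:splitting2} does not apply. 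All of the subsequent bookkeeping (the forced signatures $(1,4)\oplus(1,4)$, the classification of the $\rho_i$ as standard plus trivial, the compatibility with the real form) rests on this nonexistent embedding, so the argument collapses at this point. The only tight holomorphic homomorphisms of $\frso(2,8)$ into an algebra of type $\frsu(m,n)$ are via the spin representation (here $\frso(2,8)\to\frsp(16,\R)\subset\frsu(8,8)$, cf.\ Table 2), not the standard representation.

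Your overall strategy (reduce to the maximal tube-type subalgebra $\frso(2,8)$ via Proposition \ref{prop:tt}, then transfer to a classical target of type $\frsu$ and branch) is exactly the paper's, but the correct transfer is the spin representation $\tau:\frso(2,8)\to\frsu(8,8)$, which \emph{is} tight and holomorphic. After that, the paper does not identify the two $\frsu(1,1)$-summands by a signature count; instead it works with the Hermitian hull: by Lemma \ref{lem:hulhol}, $\calH(\rho)=\calH(\rho')=\calH(\tau\circ\rho')$, the branching of $\tau\circ\rho'$ splits it into tight homomorphisms of the two $\frsu(1,1)$-factors, whose hulls are sums of algebras $\frsp(2r_i,\R)$ by \cite[Theorem 10]{tight}, and then a rank comparison (rank of $\frso(2,8)$ equals $2$, $\rho'$ injective) forces $\calH(\rho)\cong\frsu(1,1)^2$, i.e.\ $\rho$ is holomorphic and its image is a polydisc. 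If you repair your proof, you should replace the target $\frsu(2,8)$ by $\frsu(8,8)$ via the spin representation and then either follow this hull-plus-rank argument or redo your explicit identification inside $\frsu(8,8)$, where the naive signature count no longer pins down the summands.
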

\begin{proof}
 Since $\rho$ is tight, its image is contained in a maximal tube type subdomain of $\fre_{6(-14)}$ that is a Hermitian Lie algebra isomorphic to $\frso(2,8)$, i.e. $\rho$ factors as $\rho=\iota\circ\rho':\frsu(1,1)^2\to\frso(2,8)\to\fre_{6(-14)}$. By Lemma \ref{lem:hulhol} we have that $\Hh(\rho)=\Hh(\iota\circ\rho')=\Hh(\rho')$.
In order to conclude, it is enough to show that the Hermitian hull $\Hh(\rho')$ is already the algebra $\frsu(1,1)^2$. The spin representation $\tau:\frso(2,8)\to \frsu(8,8)$ is holomorphic and tight, hence it follows from Lemma \ref{lem:hulhol} that $\Hh(\rho')=\Hh(\tau\circ\rho')$. It follows from Proposition \ref{prop:2.2} that the homomorphism $\tau\circ \rho'$ is a direct sum of tight homomorphisms factoring through the two factors, and  we get from \cite[Theorem 10]{tight} that $\Hh(\rho')=\Hh(\tau\circ \rho')=\oplus \frsp(2r_i,\R)$. Since the real rank of $\frso(8,2)$ is 2 and the homomorphism $\rho'$ is injective, we get, for rank reasons, that $\Hh(\rho')=\Hh(\rho)$ is isomorphic to $\frsu(1,1)^2$, and this concludes the proof.
\end{proof}

\section{Proofs of the main theorems}\label{sec:proofs}
\begin{proof}[Proof of Theorem \ref{thm:1}]
Using Proposition \ref{prop:tighteasy} we can reduce the problem to the case when $\frh$ simple and $\rho$ is injective.
Our first observation is that, if $\rho:\frg \to \frh$ is an injective homomorphism between Hermitian Lie algebras, then the real rank of $\frh$ is at least as big as the real rank of $\frg$. This can easily be checked by looking at the associated map between the symmetric spaces. In particular if $\frh$ is $\fre_{7(-25)}$, and $\rho$ is tight, then $\frg$ must be of tube type. Since by assumption $\frg$ has no $\frsu(1,1)$ factor, $\frg$ needs to be simple. This implies that $\rho$ is holomorphic by \cite{Ham3}. Similarly, if $\frh=\fre_{6(-14)}$, then either $\frg$ is simple and $\rho$ is holomorphic by \cite{Ham2,Ham3}, or $\frg$ is a product of two rank one algebras. In this case we can consider a tight embedding $d:\frsu(1,1)^2\to\frg$ and deduce from Lemma \ref{lem:e} that $\rho$ is simple.

Next we assume that $\frh$ is classical.
 Let $\frak g=\frak g_1\oplus\ldots\oplus \frak g_k$ be the decomposition of $\frak g$ into simple factors. In order to show that $\rho$ is holomorphic, it is enough to find, for every simple factor $\frg_s$, a holomorphically embedded subalgebra $j_s:\frak l_s\to \frak g_s$ such that the composition $\rho\circ j_s:\frak l_s\to \frh $ is holomorphic. Once that is done, it is easy to conclude that the restriction of $\rho$ to $\frak g_s$ is holomorphic, using the irreducibility of $\frg_s$ under the adjoint action of $\frg_s$ on itself  (see \cite[Lemma 5.4]{Ham2}). Clearly if the restriction of $\rho$ to each simple factor is holomorphic, the same is true for the homomorphism $\rho$.
 
Let us now fix a factor $\frg_s$. It is easy to check with a case by case argument that there exists a tight injective and holomorphic embedding $j_s:\frl_s\to \frg_s$ where $\frl_s$ is either $\frsu(1,2)$ in case $\frg_s=\frsu(1,p)$, $\frsp(4,\R)$ in case $\rk (\frg_s)$ is even or $\frsp(4,\R)\oplus \frsu(1,1)$ in case $\rk (\frg_s)$ is odd and greater than one (see \cite[Theorem 7.1]{Ham2}). 

Let us first deal with the factors $\frg_s$ of higher rank. We consider the tight holomorphic embedding $\phi_s:\frsp(4,\R)\oplus \frsu(1,1)\to \frg$ that splits as a sum of a diagonal disc $d_t:\frsu(1,1)\to \frg_t$ if $t$ is different from $s$ and the holomorphic tight embedding $j_s$ in the factor $\frg_s$. An easy computation confirms that $\phi_s$ is tight and injective, hence the composition $\rho\circ \phi_s$ is tight. Since $\frsp(4,\R)\oplus\frsu(1,1)$ is of tube type, we get, using Proposition \ref{prop:tt} that the image of $\rho\circ\phi_s$ is contained in a tube type subdomain, that hence admits a tight holomorphic embedding in $\frsu(m,m)$ for some $m$. Applying Proposition \ref{prop:hamlowrk} we get that $\rho\circ\phi_s$ is holomorphic in the $\frsp(4,\R)$ factor, hence $\rho$ is holomorphic in the $\frg_s$ factor.

The case in which $\frg_s$ is isomorphic to $\frsu(1,p)$ splits in two cases: if $\frh$ is $\frsu(m,n)$ we apply the same argument combined with Corollary \ref{cor:ham} instead of Proposition \ref{prop:hamlowrk}, if, instead, $\frh$ is $\frso^*(2p)$ we apply Corollary \ref{cor:so*}.
\end{proof}
\begin{proof}[Proof of Theorem \ref{thm:2}]
We first consider the case $\frh=\frsu(p,q)$, or $\frh=\frso^*(2p)$. 
Applying inductively Proposition \ref{prop:splitting2} we can show that the homomorphism $\rho:\frsu(1,1)^l\bigoplus_{i=1}^n\frg_i\rightarrow \frh$ can be decomposed as a direct sum   of homomorphisms $\rho_j$ factoring through the factors: 
$$\bigoplus_{j=1}^{n+l}\rho_j:\frsu(1,1)^l\bigoplus_{i=1}^n\frg_i\rightarrow\bigoplus_{j=1}^{n+l} \frh_j.$$

Denote by $\iota:\oplus_j \frh_j\to \frh$ the tight holomorphic  inclusion. Since $\iota$  is holomorphic, it follows from Lemmas \ref{lem:hulhol} and  \ref{lem:hul} that $\mathcal{H}(\rho)=\mathcal{H}(\iota\circ\oplus\rho_j)=\mathcal{H}(\oplus\rho_j)=\bigoplus \mathcal{H}(\rho_j)$. 
For $j\leq l$ we have that $\rho_j$ is a map $\rho_j:\frsu(1,1)\rightarrow\frh_j$, thus $\mathcal{H}(\rho_j)=\bigoplus_k\frsp(2m_{k,j},\mathbb{R})$ for these $j$ by \cite[Corollary 9.6]{tight}.
For the remaining $j$:s we have maps $\rho_j:\frg_i\rightarrow \frh_j$ where $j=l+i$ and $\frg_i\neq\frsu(1,1)$. These maps must be holomorphic by the results in \cite{Ham2} and \cite{Ham3}. The statement of the theorem thus follows for these cases.

Next we consider the cases $\frh=\frsp(2n,\R),\frso(2,n)$.
All these $\frh$ can be tightly and holomorphically embedded into some $\frsu(n,n)$ by the results in \cite{Ham1}, denote this embedding $\iota$.
By Lemma \ref{lem:hulhol} we have $\mathcal{H}(\rho)\simeq\mathcal{H}(\iota\circ\rho)$. The Hermitian hull must thus be of the prescribed form by the results for the first case.   

We finish the proof with the case $\fre_{6(-14)}$: if $\frg$ is simple, the result follows from \cite{Ham3}, otherwise fix a splitting $\frg=\frg_1\oplus\frg_2$ and consider the composition of $\rho$ with a sum of (anti)-diagonal discs $d:\frsu(1,1)\oplus\frsu(1,1)\to \frg_1\oplus\frg_2$ chosen in such a way that the composition $\rho\circ d$ is tight.
It follows from Lemma \ref{lem:e} that the composition $\rho\circ d$ is holomorphic. This implies that $\rho$ is holomorphic as well: since the rank of $\fre_{6(-14)}$ is two, the algebra $\frg$ has precisely two rank one factors. This implies that $\calH(\rho)\cong \frg$. This concludes the proof.
\end{proof}

\section{Lists}\label{sec:lists}
We start recalling, for the reader's convenience the isomorphisms between Lie algebras of Hermitian type, see \cite[Page 519]{Helgason} for more details:
\begin{center}
\begin{table}[h!]
 \begin{tabular}{|c|c|c|c|c|} 
 \hline $I_{mn}$& $II_{m}$&$III_{m}$&$IV_{m}$&\\\hline
  \hline $\frsu(1,1)$&&$\frsp(2,\R)$&$\frso(2,1)$&\\\hline
  &&&$\frso(2,2)$&$\frsu(1,1)\oplus\frsu(1,1)$\\\hline
  &$\frso^*(4)$&&&$\frsu(1,1)\oplus \frsu(2)$\\\hline
  $\frsu(1,3)$&$\frso^*(6)$&&&\\\hline
  &&$\frsp(4,\R)$&$\frso(2,3)$&\\\hline
  $\frsu(2,2)$&&&$\frso(2,4)$&\\\hline
  &$\frso^*(8)$&&$\frso(2,6)$&\\\hline
  
 \end{tabular}\\[5pt]\caption{}
\end{table}
\end{center}
Due to the isomorphism $\frsu(m,n)\cong \frsu(n,m)$ we will only consider the algebras $\frsu(m,n)$ with $m\leq n$.
\subsection{Some building blocks}
\subsubsection{Inclusions of higher rank domains}
We fix the following matrix models:
$$\begin{array}{cl}\frsu(p,q)&=\left\{ \bpm A&Z\phantom{^*}\\Z^{*}&B\phantom{^*}\epm,\, \begin{array}{l}A\in M_p(\C),\; B\in M_q(\C),\; Z\in M_{p,q}(\C),\\
 A^{*}=-A,\;B^{*}=-B,\; \mbox{tr}(A)+\mbox{tr}(B)=0\end{array}\right\},\\\\
\frsp(2n,\R)&=\left\{ \bpm A&Z\\Z^{*}&A^{*}\epm,\, \begin{array}{l}A\in M_n(\C),\; Z\in M_{n}(\C),\\ A^{*}=-A,\;Z^T=Z\end{array}\right\},\\\\
\frso^{*}(2n)&=\left\{ \bpm A&Z\\Z^{*}&A^{*}\epm,\,\begin{array}{l} A\in M_n(\C),\; Z\in M_{n}(\C),\\ A^{*}=-A,\;Z^T=-Z\end{array}\right\}.\end{array}$$

We also fix the classical realizations for the associated symmetric spaces that can be found, for example in  \cite[Pages 91, 115 and 123]{PS}. These are classically referred to as the Harish-Chandra realization of the symmetric spaces as a bounded domain in a complex Euclidean space.
 $$\begin{array}{cl}
\Xx_{\frsu(p,q)}&=\{Z\in M_{p,q}(\C)|\; \Id_p-Z^*Z<0\};\\
\Xx_{\frsp(2n,\R)}&=\{Z\in M_n(\C)|\; \Id_n-Z^*Z<0,\; Z^T=Z\};\\
\Xx_{\frso^*(2n)}&=\{Z\in M_n(\C)|\; \Id_n-Z^*Z<0,\;Z^T=-Z\}.\\    
   \end{array}
$$ 
 
Using the explicit realizations it is easy to describe some holomorphic maps between these spaces and the corresponding Lie algebra homomorphisms. We have the obvious inclusion homomorphisms which corresponds to the natural inclusion maps for the realizations we have chosen:
$$
\begin{array}{rclccrcl}\frsp(2n,\R)&\rightarrow&\frsu(n,n),&&& \Xx_{\frsp(2n,\R)}&\rightarrow &\Xx_{\frsu(n,n)}\\
\frso^{*}(2n)&\rightarrow&\frsu(n,n),&&& \Xx_{\frso^*(2n)}&\rightarrow& \Xx_{\frsu(n,n)}
\end{array}
$$
The homomorphisms in the first class are tight for all $n$, the homomorphisms in the second class for even $n$: this is because, with the given normalization of the metric, the inclusion of $\Xx_{\frsp(2n,\R)}$ in $\Xx_{\frsu(n,n)}$ is isometric and the two Lie algebras have the same rank, and the inclusion  $\tau:\Xx_{\frso^*(2n,\R)}\to\Xx_{\frsu(n,n)}$ satisfies $\tau^*(\o_{\frsu(n,n)})=2\o_{\frso^*(2n)}$, but the rank of $\Xx_{\frso^*(2n)}$ is $\lfloor n/2\rfloor$.

There are also holomorphic homomorphisms from $\frsu(m,n)$ to the algebras $\frsp(2(m+n),\R)$ and $\frso^*(2(m+n))$, we describe them,  here below:

$$\begin{array}{cccccc}
\frsu(m,n)&\rightarrow&\frsp(2(m+n),\R)& \frsu(m,n)&\rightarrow&\frso^*(2(m+n)),\\
\bpm A&Z\\Z^{*}&B\epm&\mapsto &\bpm A&0&0&Z\\0&\bar B&Z^T&0\\ 0&\bar{Z}&\bar A&0\\Z^*&0&0&B  \epm\phantom{a}&
\bpm A&Z\\Z^{*}&B\epm&\mapsto&\bpm A&0&0&Z\\0&\bar B&-Z^T&0\\ 0&-\bar{Z}&\bar A&0\\Z^*&0&0&B  \epm.
\end{array}$$
They correspond to the following holomorphic and isometric maps between the associated symmetric spaces:
$$
\begin{array}{ccccccc}
\Xx_{\frsu(m,n)}&\rightarrow &\Xx_{\frsp(2(m+n),\R)} && \Xx_{\frsu(m,n)}&\rightarrow &\Xx_{\frso^*(2(m+n))},\\
 \phantom{\Bigg(}Z&\mapsto &\bpm0&Z\\Z^T&0\epm&& Z&\mapsto& \bpm0&Z\\-Z^T&0\epm.\end{array}
$$
Comparing the ranks one easily gets that the homomorphisms in the first class are tight precisely when $m=n$, and the homomorphisms in the second when $n=m$ or $m+1$ (see \cite{Ham1} for more details). 

It is easy to check that the compositions of the tight embeddings described above, namely
\begin{center}
\begin{tabular}{l} 
$\frsp(2m,\R)\to\frsu(m,m)\to\frsp(4m,\R)$\\ $\frsu(m,m)\to\frsp(4m,\R)\to\frsu(2m,2m)$\\
$\frso^*(2m)\to\frsu(m,m)\to\frso^*(4m)$\\ $\frsu(m,m)\to\frso^*(4m)\to\frsu(2m,2m)$ 
\end{tabular}
\end{center}
 are conjugate to diagonal inclusions.

\subsubsection{The spin representations}
Let $V$ be a real vector space endowed with an orthogonal form $S$ of signature $(2,p)$. The Clifford algebra $\calC$ over the complexification $V_\C$ is the quotient of the tensor algebra $\bigoplus_r V_\C^{\otimes r}$ by the ideal generated by elements of the form $x\otimes x-S(x,x)$. The Clifford algebra is a complex vector space of dimension $2^{p+2}$ and it is shown in \cite[Page 453]{Satake-class} that its half dimensional ideal $\calC^+$ consisting of even degree elements can be realized, depending on the parity of $p$, as 
$$\begin{array}{ll}
   M_{2^{k+1}}(\C)& p=2k+1\\
   M_{2^{k}}(\C)\times M_{2^{k}}(\C)&  p=2k. 
  \end{array}
$$
It is well known that the twofold covering group ${\rm Spin}(2,p)$  of $\SO(s,m)$ has a natural realization as a subgroup of $\calC^+$. Moreover Satake, in the paper cited above, showed that there exists a natural Hermitian form on $\C^{2^{k+1}}$ (resp. $\C^{2^{k}}$) of signature $(2^{k},2^{k})$ (resp. $(2^{k-1},2^{k-1})$)  and the group ${\rm Spin}(2,p)$ is contained in $\SU(2^{k},2^{k})$ (resp. $(\SU(2^{k-1},2^{k-1}))^2$). This gives a homomorphism $\rho_{2k+1}$ of $\frso(2,2k+1)$ into $\frsu(2^{k},2^{k})$ (resp. two inequivalent homomorphisms $\rho_{2k}$ and $\rho'_{2k}$ of $\frso(2,2k)$ into $\frsu(2^{k-1},2^{k-1})$). The image of these homomorphisms are contained in the subalgebra $\frso^*(4l)$ if  $p=1,2,3$ modulo 8 and in $\frsp(2l,\R)$ if $p=5,7,8$ modulo 8, where $l$ depends on $p$ as in the table below. We summarize what we just recalled about the spin representations in this table:

\begin{center}
\begin{table}[h!]
\begin{tabular}{|c|c|crcl|} \hline $p\;(8)$&$l_p$&& Spin&&\\
\hline
    $1,3$ &$2^{(p-3)/_2}$& $\rho_{2k+1}:$&$\frso(2,2k+1)$&$\to $&$\frso^*(2^{k+1})\subset \frsu(2^{k},2^{k})$  \\
    $2$&$2^{p/_2-2} $ & $\rho_{2k},\,\rho'_{2k} :$&$\frso(2,2k)$&$\to $&$\frso^*(2^{k})\subset \frsu(2^{k-1},2^{k-1})$  \\
    $4,6$ &$2^{p/_2-1}$& $\rho_{2k},\,\rho'_{2k} :$&$\frso(2,2k)$&$\to$&$ \frsu(2^{k-1},2^{k-1})$  \\
    $5,7$ &$2^{(p-1)/_2}$& $\rho_{2k+1}:$&$\frso(2,2k+1)$&$\to $&$\frsp(2^{k+1},\R)\subset \frsu(2^{k},2^{k})$  \\
    $8$ &$2^{p/_2-1}$& $\rho_{2k},\,\rho'_{2k} :$&$\frso(2,2k)$&$\to $&$\frsp(2^{k},\R)\subset \frsu(2^{k-1},2^{k-1})$  \\\hline
  
\end{tabular}\\[5pt]\caption{}\end{table}   \end{center}
It was checked in \cite{Ham1} that every spin representation is tight and holomorphic.

%
\subsubsection{The non-holomorphic tight representations}
The irreducible representations of odd highest weight $\rho_n^\C:\frsl(2,\C)\rightarrow \frgl(V)=\frgl(2n,\C)$ restricts to tight homomorphisms $\rho_n:\frsu(1,1)\rightarrow\frsp(2n,\R)$ that are non-holomorphic when $n>1$. These representations are a building block of any non-holomorphic representation. For an explicit construction of these representations we refer to \cite[Example 8.8]{BIW}.

\subsection{Lists of tight homomorphisms}

\begin{center}
\begin{tabular}{|c|}  \hline
     $\rho:\frg\to \frsu(m,m)$  
     \\\hline
\end{tabular}   \end{center}
Every tight homomorphism $\rho:\frg\to \frsu(m,m)$ must fulfill the equation at the bottom row of the table. The homomorphism is described below in terms of the homomorphisms defined earlier in the section.
\vspace{.3cm}

\hspace{-1.1cm}\vspace{.5cm}
\begin{table}[h!]
\begin{tabular}{|c|c|}
 \hline
 $\frg$& \parbox{200pt}{$$\frsu(1,1)^A\bigoplus_{i=1}^{ I}\frsu(p_i,p_i)\bigoplus_{l=1}^{ L}\frsp(2n_l,\R)\bigoplus_{j=1}^{ J}\frso^*(4m_j)\bigoplus_{s=1}^{ S}
 \frso(2,r_s)$$}\\\hline
 
 $\calH(\rho)$& \parbox{200pt}{$$\bigoplus_{a=1}^A\bigoplus_{b=1}^{ B_a}\frsp(2f_{ab},\R)\bigoplus_{i=1}^{ I}\frsu(p_i,p_i)\bigoplus_{l=1}^{ L}\frsp(2n_l,\R)\bigoplus_{j=1}^{ J}\frso^*(4m_j)\bigoplus_{s=1}^{ S}
 \frso(2,r_s)$$}
 \\\hline
 
 $\frg_{\rm reg}$&\parbox{200pt}{$$\bigoplus_{a=1}^A\bigoplus_{b=1}^{ B_a}\frsu(f_{ab},f_{ab})^{g_{1ab}}\bigoplus_{i=1}^{ I}\frsu(p_i,p_i)^{g_{2i}}\bigoplus_{l=1}^{ L}\frsu(n_l,n_l)^{g_{3l}} $$}\\   &\parbox{300pt}{$$   \bigoplus_{j=1}^{ J}\frsu(2m_j,2m_j)^{g_{4j}}\bigoplus_{s=1}^{ S}
 \frsu(
 {l_{r_s}},{l_{r_s}}
 )^{g_{5s}}$$}

 \\ \hline
    &\parbox{300pt}{$$\sum_{a,b}g_{1ab}f_{ab}+\sum_i g_{2i}p_i+ \sum_l g_{3l}n_l+\sum_j g_{4j}m_j+\sum_s g_{5s} l_{r_s}=m$$}\\\hline
\end{tabular}\\[5pt]
{\caption{}}\end{table}

Using the matrix models described above the image under the homomorphism $\rho$ of the element   $(X_1,..,X_A,Y_1,...,Y_{I},U_1,...,U_{L},V_1,...,V_{J},W_1,...,W_{S})=(X,Y,U,V,W)$ of $\frg$ is the matrix
\begin{center}$
\bpm A_1 &0 &0 &Z_1 &0 &0\\
0&\ddots&0&0&\ddots&0\\
0&0&A_5&0&0&Z_5\\
Z_1^* &0 &0 &B_1 &0 &0\\
0&\ddots&0&0&\ddots&0\\
0&0&Z_5^*&0&0&B_5
\epm$  where \end{center}

\hspace{-2.3cm}
 \makebox{$\bpm A_1 &Z_1\\ Z_1^*&B_1\epm
=\bpm A_{11} &0 &0 &Z_{11} &0 &0\\
0&\ddots&0&0&\ddots&0\\
0&0&A_{1A}&0&0&Z_{1A}\\
Z_{11}^* &0 &0 &B_{11} &0 &0\\
0&\ddots&0&0&\ddots&0\\
0&0&Z_{1A}^*&0&0&B_{1A}
\epm$,
 $\bpm A_{1a} &Z_{1a}\\ Z_{1a}^*&B_{1a}\epm
=\bpm A_{1a1} &0 &0 &Z_{1a1} &0 &0\\
0&\ddots&0&0&\ddots&0\\
0&0&A_{1aB_a}&0&0&Z_{1aB_a}\\
Z_{1a1}^* &0 &0 &B_{1a1} &0 &0\\
0&\ddots&0&0&\ddots&0\\
0&0&Z_{1aB_a}^*&0&0&B_{1aB_a}
\epm$,}

\vspace{.3cm}
\hspace{-1.5cm}
\makebox{$\bpm A_{1ab} &Z_{1ab}\\ Z_{1ab}^*&B_{1ab}\epm=
\bpm A_{1ab1} &0 &0 &Z_{1ab1} &0 &0\\
0&\ddots&0&0&\ddots&0\\
0&0&A_{1abg_{1ab}}&0&0&Z_{1abg_{1ab}}\\
Z_{1ab1}^* &0 &0 &B_{1ab1} &0 &0\\
0&\ddots&0&0&\ddots&0\\
0&0&Z_{1abg_{1ab}}^*&0&0&B_{1abg_{1ab}}
\epm$,\phantom{ciao}
$\bpm A_{1abc} &Z_{1abc}\\ Z_{1abc}^*&B_{1abc}\epm
=\rho_{f_{ab}}(X_a)$;}

\vspace{.2cm}
\hspace{-2.3cm}
\makebox{
$\bpm A_2 &Z_2\\ Z_2^*&B_2\epm
=\bpm A_{21} &0 &0 &Z_{21} &0 &0\\
0&\ddots&0&0&\ddots&0\\
0&0&A_{2I}&0&0&Z_{2I}\\
Z_{21}^* &0 &0 &B_{21} &0 &0\\
0&\ddots&0&0&\ddots&0\\
0&0&Z_{2I}^*&0&0&B_{2I}
\epm$, 
$\bpm A_{2i} &Z_{2i}\\ Z_{2i}^*&B_{2i}\epm
=\bpm A_{2i1} &0 &0 &Z_{2i1} &0 &0\\
0&\ddots&0&0&\ddots&0\\
0&0&A_{2ig_{2i}}&0&0&Z_{2ig_{2i}}\\
Z_{2i1}^* &0 &0 &B_{2i1} &0 &0\\
0&\ddots&0&0&\ddots&0\\
0&0&Z_{2ig_{2i}}^*&0&0&B_{2ig_{2i}}
\epm$,}\\[5pt]
\begin{center}
$\bpm A_{2i1} &Z_{2i1}\\ Z_{2i1}^*&B_{2i1}\epm= ...= \bpm A_{2ig_{2i}} &Z_{2ig_{2i}}\\ Z_{2ig_{2i}}^*&B_{2ig_{2i}}\epm
=Y_i$;
\end{center}

\vspace{10pt}
\hspace{-2.3cm}
\makebox{
$\bpm A_3 &Z_3\\ Z_3^*&B_3\epm
=\bpm A_{31} &0 &0 &Z_{31} &0 &0\\
0&\ddots&0&0&\ddots&0\\
0&0&A_{3L}&0&0&Z_{3L}\\
Z_{31}^* &0 &0 &B_{31} &0 &0\\
0&\ddots&0&0&\ddots&0\\
0&0&Z_{3L}^*&0&0&B_{3L}
\epm$,  
$\bpm A_{3l} &Z_{3l}\\ Z_{3l}^*&B_{3l}\epm
=\bpm A_{3l1} &0 &0 &Z_{3l1} &0 &0\\
0&\ddots&0&0&\ddots&0\\
0&0&A_{3lg_{3l}}&0&0&Z_{3lg_{3l}}\\
Z_{3l1}^* &0 &0 &B_{3l1} &0 &0\\
0&\ddots&0&0&\ddots&0\\
0&0&Z_{3lg_{3l}}^*&0&0&B_{3lg_{3l}}
\epm$,}\\[5pt]
\begin{center}
$\bpm A_{3l1} &Z_{3l1}\\ Z_{3l1}^*&B_{3l1}\epm=...=\bpm A_{3lg_{3l}} &Z_{3lg_{3l}}\\ Z_{3lg_{3l}}^*&B_{3lg_{3l}}\epm
=U_l$;
\end{center}

\vspace{10pt}
\hspace{-2.3cm}
\makebox{
$\bpm A_4 &Z_4\\ Z_4^*&B_4\epm
=\bpm A_{41} &0 &0 &Z_{41} &0 &0\\
0&\ddots&0&0&\ddots&0\\
0&0&A_{4J}&0&0&Z_{4J}\\
Z_{41}^* &0 &0 &B_{41} &0 &0\\
0&\ddots&0&0&\ddots&0\\
0&0&Z_{4J}^*&0&0&B_{4J}
\epm$, 
$\bpm A_{4j} &Z_{4j}\\ Z_{4j}^*&B_{4j}\epm
=\bpm A_{4j1} &0 &0 &Z_{4j1} &0 &0\\
0&\ddots&0&0&\ddots&0\\
0&0&A_{4jg_{4j}}&0&0&Z_{4jg_{4j}}\\
Z_{4j1}^* &0 &0 &B_{4j1} &0 &0\\
0&\ddots&0&0&\ddots&0\\
0&0&Z_{4jg_{4j}}^*&0&0&B_{4jg_{4j}}
\epm$,}\\[5pt]
\begin{center}
$\bpm A_{4j1} &Z_{4j1}\\ Z_{4j1}^*&B_{4j1}\epm=...=\bpm A_{4jg_{4j}} &Z_{4jg_{4j}}\\ Z_{4jg_{4j}}^*&B_{4jg_{4j}}\epm
=V_j$;
\end{center}

\vspace{10pt}
\hspace{-2.3cm}
\makebox{
$\bpm A_5 &Z_5\\ Z_5^*&B_5\epm
=\bpm A_{51} &0 &0 &Z_{51} &0 &0\\
0&\ddots&0&0&\ddots&0\\
0&0&A_{5S}&0&0&Z_{5S}\\
Z_{51}^* &0 &0 &B_{51} &0 &0\\
0&\ddots&0&0&\ddots&0\\
0&0&Z_{5|S|}^*&0&0&B_{5|S|}
\epm$, 
$\bpm A_{5s} &Z_{5s}\\ Z_{5s}^*&B_{5s}\epm
=\bpm A_{5s1} &0 &0 &Z_{5s1} &0 &0\\
0&\ddots&0&0&\ddots&0\\
0&0&A_{5sg_{5s}}&0&0&Z_{5sg_{5s}}\\
Z_{5s1}^* &0 &0 &B_{5s1} &0 &0\\
0&\ddots&0&0&\ddots&0\\
0&0&Z_{5sg_{5s}}^*&0&0&B_{5sg_{5s}}
\epm$, }\\[5pt]
\begin{center}
$\bpm A_{5st} &Z_{5st}\\ Z_{5st}^*&B_{5st}\epm=\rho_{r_s}(W_j)$ (or $\rho'_{r_s}(W_j)$ if $r_s$ is even) for $t=1,...,g_{5s}$.
\end{center}

\vspace{20pt}
\begin{center}
\begin{tabular}{|c|}  \hline
     $\rho:\frg\to \frsu(m,n)$ \\ 
             \hline
\end{tabular}   \end{center}
If $\rho:\frg\to\frsu(m,n)$ is a tight homomorphism and we denote by $\frg^T$ and $\frg^{NT}$ the subalgebras of $\frg$ consisting of the product of all the factors of tube type (resp. not of tube type), it follows from Proposition \ref{prop:splitting2} that there exists $k$ and a subalgebra of $\frsu(m,n)$ of the form $\frsu(k,k)\oplus\frsu(m-k,n-k)$ such that the homomorphism $\rho$ splits as direct sum of $\rho^T:\frg^T\to \frsu(k,k)$ and $\rho^{NT}:\frg^{NT}\to \frsu(m-k,n-k)$. In particular $\rho^T$ is one of the homomorphisms described in Table 3, and we have:

\begin{table}[h!]
\begin{tabular}{|c|c|}
 \hline
 $\frg$& \parbox{200pt}{$$\frg^T\bigoplus_{i=1}^{ I}\frsu(p_i,q_i)\phantom{more space more space}$$}\\\hline
 $\calH(\rho)$& \parbox{200pt}{$$\calH(\rho^T)\bigoplus_{i=1}^{ I}\frsu(p_i,q_i)\phantom{more space more space}$$}
 \\\hline
 $\frg_{\rm reg}$&\parbox{200pt}{$$\frg_{\rm reg}^T\bigoplus_{i=1}^{ I}\frsu(p_i,q_i)^{g_i}\phantom{more space more space}$$}
 \\ \hline
    &\parbox{200pt}{$$k+\sum_ig_ip_i=m\phantom{more space more space}$$}\\\hline
\end{tabular}\\[5pt]
\caption{}
\end{table}

An explicit description of the homomorphism is given as follows. An element $(X,X_1,...,X_{I})\in \frg^T\bigoplus_{i\in I} \frsu(p_i,q_i)$ is mapped as:\begin{center}
$(X,X_1,...,X_{I})\mapsto \bpm A_1 &0  &Z_1 &0 \\
0&A_2&0&Z_2\\
Z_1^* &0  &B_1 &0\\
0&Z_2^*&0&B_2
\epm$, where 
$\bpm A_1 &Z_1\\ Z_1^*&B_1\epm
=\rho^T(X)$, and\\
\end{center}
\hspace{-2.3cm}
\makebox{
$\bpm A_2 &Z_2\\ Z_2^*&B_2\epm
=\bpm A_{21} &0 &0 &Z_{21} &0 &0\\
0&\ddots&0&0&\ddots&0\\
0&0&A_{2I}&0&0&Z_{2I}\\
Z_{21}^* &0 &0 &B_{21} &0 &0\\
0&\ddots&0&0&\ddots&0\\
0&0&Z_{2I}^*&0&0&B_{2I}
\epm$, 
$\bpm A_{2i} &Z_{2i}\\ Z_{2i}^*&B_{2i}\epm
=\bpm A_{2i1} &0 &0 &Z_{2i1} &0 &0\\
0&\ddots&0&0&\ddots&0\\
0&0&A_{2ig_i}&0&0&Z_{2ig_i}\\
Z_{2i1}^* &0 &0 &B_{2i1} &0 &0\\
0&\ddots&0&0&\ddots&0\\
0&0&Z_{2ig_i}^*&0&0&B_{2ig_i}
\epm$, }
\begin{center}
$\bpm A_{2i1} &Z_{2i1}\\ Z_{2i1}^*&B_{2i1}\epm= ...= \bpm A_{2ig_i} &Z_{2ig_i}\\ Z_{2ig_i}^*&B_{2ig_i}\epm
=X_i$.

\end{center}

\begin{center}
\begin{tabular}{|c|}  \hline
     $\rho:\frg\to \frso^*(4p)$ \\ 
             \hline
\end{tabular}   \end{center}

If $\rho:\frg\to\frso^*(4p)$ is a tight homomorphism and we denote by $\frg_1$ (resp. $\frg_2$) the subalgebras of $\frg$ consisting of the product of all the factors non isomorphic to $\frso^*(2k)$ nor to $\frso(2,s)$  for any $k$ (resp. isomorphic to), there exists $k$ and a subalgebra of $\frso^*(4p)$ of the form $\frso^*(4k)\oplus\frso^*(4p-4k)$ such that the homomorphism $\rho$ splits as direct sum of $\rho_1:\frg_1\to \frsu(k,k)\to\frso^*(4k)$ and $\rho_2:\frg_2\to \frso^*(4p-4k)$. In particular $\rho_1$ is composition of one of the homomorphisms described in Table 3 and the standard embedding of $\frsu(k,k)$ in $\frso^*(4k)$ and we have:\vspace{.3cm}

\hspace{-1cm}
\begin{table}[h!]
\begin{tabular}{|c|c|}
 \hline
 $\frg$& \parbox{350pt}{$$\frg_1\bigoplus_{i=1}^{ I}\frso^*(4m_i)\bigoplus_{j=1}^{ J}
 \frso(2,r_j)$$}\\\hline
 $\calH(\rho)$& \parbox{350pt}{$$\calH(\rho_1)\bigoplus_{i=1}^{ I}\frso^*(4m_i)\bigoplus_{j=1}^{ J}
 \frso(2,r_j)$$}
 \\\hline
 $\frg_{\rm reg}$&\parbox{350pt}{$$\frg_{\rm reg}^1\bigoplus_{i=1}^{ I}\frso^*(4m_i)^{g_{2i}}\bigoplus_{j=1}^{ J}\frso^*
 (4l_{r_j})^{g_{3j}}$$}
 \\ \hline
    &\parbox{350pt}{$$k+\sum_ig_{2i}m_i+\sum_jg_{3j} l_{r_j}
    =p,\; r_j\equiv 5,6,7 \,(8)$$}\\\hline
\end{tabular}\\[5pt]\caption{}\end{table}

\vspace{.3cm}

An explicit description of the homomorphism is given as follows. The image of the  element $(X,X_1,...,X_I,Y_1,...,Y_J)$ of  $\frg_1 \bigoplus_{i=1}^{ I} \frso^*(4m_i)\bigoplus_{j=1}^{ J} \frso(2,r_j)$ is the matrix:
\begin{center}
$
\bpm A_1 &0 &0 &Z_1 &0 &0\\
0&A_2&0&0&Z_2&0\\
0&0&A_3&0&0&Z_3\\
Z_1^* &0  &0&B_1 &0&0\\
0&Z_2^*&0&0&B_2&0\\
0&0&Z_3^*&0&0&B_3
\epm$, where 
$\bpm A_1 &Z_1\\ Z_1^*&B_1\epm
=\rho_1(X)$,\\
\end{center}
\hspace{-2.3cm}
\makebox{
$\bpm A_2 &Z_2\\ Z_2^*&B_2\epm
=\bpm A_{21} &0 &0 &Z_{21} &0 &0\\
0&\ddots&0&0&\ddots&0\\
0&0&A_{2I}&0&0&Z_{2I}\\
Z_{21}^* &0 &0 &B_{21} &0 &0\\
0&\ddots&0&0&\ddots&0\\
0&0&Z_{2I}^*&0&0&B_{2I}
\epm$, 
$\bpm A_{2i} &Z_{2i}\\ Z_{2i}^*&B_{2i}\epm
=\bpm A_{2i1} &0 &0 &Z_{2i1} &0 &0\\
0&\ddots&0&0&\ddots&0\\
0&0&A_{2ig_{2i}}&0&0&Z_{2ig_{2i}}\\
Z_{2i1}^* &0 &0 &B_{2i1} &0 &0\\
0&\ddots&0&0&\ddots&0\\
0&0&Z_{2ig_{2i}}^*&0&0&B_{2ig_{2i}}
\epm$, }
\begin{center}
$\bpm A_{2i1} &Z_{2i1}\\ Z_{2i1}^*&B_{2i1}\epm= ...= \bpm A_{2ig_{2i}} &Z_{2ig_{2i}}\\ Z_{2ig_{2i}}^*&B_{2ig_{2i}}\epm
=X_i$,
\end{center}
\hspace{-2.3cm}
\makebox{
$\bpm A_3 &Z_3\\ Z_3^*&B_3\epm
=\bpm A_{31} &0 &0 &Z_{31} &0 &0\\
0&\ddots&0&0&\ddots&0\\
0&0&A_{3J}&0&0&Z_{3J}\\
Z_{31}^* &0 &0 &B_{31} &0 &0\\
0&\ddots&0&0&\ddots&0\\
0&0&Z_{3J}^*&0&0&B_{3J}
\epm$, 
$\bpm A_{3j} &Z_{3j}\\ Z_{3j}^*&B_{3j}\epm
=\bpm A_{3j1} &0 &0 &Z_{3j1} &0 &0\\
0&\ddots&0&0&\ddots&0\\
0&0&A_{3jg_{3j}}&0&0&Z_{3jg_{3j}}\\
Z_{3j1}^* &0 &0 &B_{3j1} &0 &0\\
0&\ddots&0&0&\ddots&0\\
0&0&Z_{3jg_{3j}}^*&0&0&B_{3jg_{3j}}
\epm$,}
\begin{center}
$\bpm A_{3jt} &Z_{3jt}\\ Z_{3jt}^*&B_{3jt}\epm=\rho_{r_j}(Y_j)$ (or $\rho'_{r_j}(Y_j)$ if $r_j$ is even) for $t=1,...,g_{3j}$.
\end{center}

\begin{center}
\begin{tabular}{|c|}  \hline
     $\rho:\frg\to \frso^*(4p+2)$ \\ 
             \hline
\end{tabular}   \end{center}

In this case either $\frg$ is of tube type, and $\rho(\frg)$ is contained in a subalgebra of the form $\frso^*(4p)$ or $\frg$ has precisely a simple factor that is not of tube type and is isomorphic either to $\frso^*(4k+2)$ or to $\frsu(k,k+1)$. Denoting by $\frg^1$ the product of all the tube type factor of $\frg$ that are of tube type, it follows from Proposition \ref{prop:splitting3} that $\rho$ splits as the direct sum of a tight homomorphism $\rho_1:\frg^1\to\frso^*(4(p-k))$ and either the isomorphism $\frso^*(4k+2)$ with a subalgebra of $\frso^*(4p+2)$ or the inclusion $\frsu(k,k+1)$ into a subalgebra of the form $\frso^*(4k+2)$.

\begin{center}
\begin{tabular}{|c|}  \hline
     $\rho:\frg\to \frsp(2p,\R)$ \\ 
             \hline
\end{tabular}   \end{center}

If $\rho:\frg\to\frsp(2p,\R)$ is a tight homomorphism and we denote by $\frg_1$ (resp. $\frg_2$) the subalgebras of $\frg$ consisting of the product of all the factors non isomorphic to $\frsp(2k,\R)$ nor to $\frso(2,s)$  for any $s$ (resp. isomorphic to), there exists $k$ and a subalgebra of $\frsp(2p,\R)$ of the form $\frsp(2k,\R)\oplus\frsp(2(p-k),\R)$ such that the homomorphism $\rho$ splits as direct sum of $\rho_1:\frg_1\to \frsu(k,k)\to\frsp(2k,\R)$ and $\rho_2:\frg_2\to \frsp(2(p-k),\R)$. In particular $\rho_1$ is composition of one of the homomorphisms described in Table 3 and the standard embedding of $\frsu(k,k)$ in $\frsp(2k,\R)$ and we have:

\hspace{-1cm}
\begin{center}
\begin{table}[h]
\begin{tabular}{|c|c|}
 \hline
 $\frg$&\hspace{-3cm}\parbox{300pt}{$$\frg^1\bigoplus_{i=1}^{ I}\frsp(2m_i,\R)\bigoplus_{j=1}^{ J}
 \frso(2,r_j)$$}\\\hline
 $\calH(\rho)$&\hspace{-3.5cm} \parbox{300pt}{$$\calH(\rho_1)\bigoplus_{i=1}^{ I}\frsp(2m_i,\R)\bigoplus_{j=1}^{ J}
 \frso(2,r_j)$$}
 \\\hline
 $\frg_{\rm reg}$&\hspace{-3cm}\parbox{300pt}{$$\frg_{\rm reg}^1\bigoplus_{i=1}^{ I}\frsp(2m_i,\R)^{g_{2i}
}\bigoplus_{j=1}^{ J}\frsp
 (2l_{r_j},\R)^{g_{3j}}$$}
 \\ \hline
    &\hspace{-1.5cm}\parbox{300pt}{$$k+\sum_ig_{2i}m_i+\sum_j 
    {g_{3j}l_{r_j}}
    =p,\; r_j\equiv 1,2,3 \,(8),\; r_j \geq 5$$}\\\hline
\end{tabular}
\\[5pt]\caption{}\end{table}
\end{center}

An explicit description of the homomorphism is given as follows. The image of the element $(X,X_1,...,X_I,Y_1,...,Y_J)$ of $\frg_1 \bigoplus_{i=1}^{ I} \frsp(4m_i,\R)\bigoplus_{j=1}^{ J} \frso(2,r_j)$ is the matrix:
\begin{center}
$
\bpm A_1 &0 &0 &Z_1 &0 &0\\
0&A_2&0&0&Z_2&0\\
0&0&A_3&0&0&Z_3\\
Z_1^* &0  &0&B_1 &0&0\\
0&Z_2^*&0&0&B_2&0\\
0&0&Z_3^*&0&0&B_3
\epm$, where 
$\bpm A_1 &Z_1\\ Z_1^*&B_1\epm
=\rho_1(X)$,\\
\end{center}
\hspace{-2.3cm}
\makebox{
$\bpm A_2 &Z_2\\ Z_2^*&B_2\epm
=\bpm A_{21} &0 &0 &Z_{21} &0 &0\\
0&\ddots&0&0&\ddots&0\\
0&0&A_{2I}&0&0&Z_{2I}\\
Z_{21}^* &0 &0 &B_{21} &0 &0\\
0&\ddots&0&0&\ddots&0\\
0&0&Z_{2I}^*&0&0&B_{2I}
\epm$, 
$\bpm A_{2i} &Z_{2i}\\ Z_{2i}^*&B_{2i}\epm
=\bpm A_{2i1} &0 &0 &Z_{2i1} &0 &0\\
0&\ddots&0&0&\ddots&0\\
0&0&A_{2ig_{2i}}&0&0&Z_{2ig_{2i}}\\
Z_{2i1}^* &0 &0 &B_{2i1} &0 &0\\
0&\ddots&0&0&\ddots&0\\
0&0&Z_{2ig_{2i}}^*&0&0&B_{2ig_{2i}}
\epm$,}
\begin{center}
$\bpm A_{2i1} &Z_{2i1}\\ Z_{2i1}^*&B_{2i1}\epm= ...= \bpm A_{2ig_{2i}} &Z_{2ig_{2i}}\\ Z_{2ig_{2i}}^*&B_{2ig_{2i}}\epm
=X_i$,
\end{center}
\hspace{-2.3cm}
\makebox{
$\bpm A_3 &Z_3\\ Z_3^*&B_3\epm
=\bpm A_{31} &0 &0 &Z_{31} &0 &0\\
0&\ddots&0&0&\ddots&0\\
0&0&A_{3J}&0&0&Z_{3J}\\
Z_{31}^* &0 &0 &B_{31} &0 &0\\
0&\ddots&0&0&\ddots&0\\
0&0&Z_{3J}^*&0&0&B_{3J}
\epm$,
$\bpm A_{3j} &Z_{3j}\\ Z_{3j}^*&B_{3j}\epm
=\bpm A_{3j1} &0 &0 &Z_{3j1} &0 &0\\
0&\ddots&0&0&\ddots&0\\
0&0&A_{3jg_{3j}}&0&0&Z_{3jg_{3j}}\\
Z_{3j1}^* &0 &0 &B_{3j1} &0 &0\\
0&\ddots&0&0&\ddots&0\\
0&0&Z_{3jg_{3j}}^*&0&0&B_{3jg_{3j}}
\epm$,}
\begin{center}
$\bpm A_{3jt} &Z_{3jt}\\ Z_{3jt}^*&B_{3jt}\epm=\rho_{r_j}(Y_j)$ (or $\rho'_{r_j}(Y_j)$ if $r_j$ is even) for $t=1,...,g_{3j}$.
\end{center}

\begin{center}
\begin{tabular}{|c|}  \hline
     $\rho:\frg\to \frso(2,p)$ \\ 
             \hline
\end{tabular}   \end{center}
If $\rho:\frg\to \frso(2,p)$ is a tight homomorphism, then $\rho$ is, up to conjugation, a composition of some of the arrows in the following diagram, where the inclusions $\frso(2,m)\to \frso(2,n)$ are inclusion as lower block, the arrows marked as $\sim$ are canonical isomorphisms and $\rho_3$ is the irreducible representation. The arrows in red mark subdiagrams that do not commute.

  \begin{center}
 \begin{tikzpicture}[>=to]
  \matrix(M)[row sep={1cm,between origins}, column sep={2cm,between origins}]{
  &&&\node(0){$\frso(2,3)$};  \\
   \node(1){$\frsu(1,1)$};&&\node(2){$\frsp(4,\R)$};&&\node(3){$\frso(2,4)$};&\node(4){$\frso(2,k)$};&\node(5){$\frso(2,p)$};\\
   &\node(6){$\frsu(1,1)^2$};&&\node(7){$\frsu(2,2)$};\\
   };
 \draw[->] (0) -- node[above]{$f$} (3);
  \draw[->,red] (1) --  node[above]{$\rho_3$}(2); 
  \draw[->](2) -- (7); \draw[->](7) -- node[above,rotate=30]{$\sim$} (3); \draw[->](3)-- (4); \draw[->](4) -- (5);
  \draw[->,red](1)--(6);
  \draw[->,red] (6)--(2);
  \draw[->](2)--node[above,rotate=30]{$\sim$}(0);
\end{tikzpicture}
\end{center}
\begin{center}
\begin{tabular}{|c|}  \hline
     $\rho:\frg\to \fre_{6(-14)}$ \\ 
             \hline
\end{tabular}   \end{center}
Let $\rho:\frg\to \fre_{6(-14)}$ be a tight homomorphism. Then $\frg$ appears in the following diagram and $\rho$ is composition of some arrows in the following diagram. The diagram in red, and each other subdiagram containing the two arrows from $\frsu(1,1)$ doesn't commute, everything else commutes.

  \begin{center}
 \begin{tikzpicture}[>=to]
  \matrix(M)[row sep={1cm,between origins}, column sep={1.7cm,between origins}]{
  &&&\node(00){$\fre_{6(-14)}$};\\\\
  \node(12){$\frsu(1,1)\oplus\frsu(1,5)$};&&\node(13){$\frsu(2,4)$};&&\node(14){$\frso^*(10)$};&&\node(15){$\frso(2,8)$};\\
  \node(22){$\frsu(1,1)\oplus\frsu(1,4)$};&&&&&&\node(25){$\frso(2,7)$};\\
  \node(32){$\frsu(1,1)\oplus\frsu(1,3)$};&&\node(11){$\frsu(1,2)\oplus\frsu(1,2)$};&&\node(33){$\frsu(2,3)$};&\node(34){$\frso(2,6)$};\\
  &&&&&\node(44){$\frso(2,5)$};\\
  &&\node(52){$\frsu(1,1)\oplus\frsu(1,2)$};&&\node(53){$\frsu(2,2)$};\\
  &&&\node(63){$\frsp(4,\R)$};\\
  &&\node(72){$\frsu(1,1)^2$};\\
  &&&\node(83){$\frsu(1,1)$};\\
   };
\draw[->,red] (83)--(63);\draw[->,red] (83)--(72);
\draw[->] (72)--(52);\draw[->,red] (72)--(63);
\draw[->] (63)--(53);
\draw[->] (52)--(11);\draw[->] (52)--(32);\draw[->] (52)--(33);
\draw[->] (53)--(33);\draw[->] (53)--(44);
\draw[->] (44)--(34);
\draw[->] (32)--(22);\draw[->] (32)--(13);\draw[->] (32)--(14);\draw[->] (33)--(13);
\draw[->] (34)--(14);\draw[->] (34)--(25);
\draw[->] (22)--(12);\draw[->] (25)--(15);
\draw[->] (11)--(13);
\draw[->](12)--(00);
\draw[->](13)--(00);
\draw[->](14)--(00);
\draw[->](15)--(00);
\end{tikzpicture}
\end{center}
\begin{center}
\begin{tabular}{|c|}  \hline
     $\rho:\frg\to \fre_{7(-25)}$ \\ 
             \hline
\end{tabular}   \end{center}
Unfortunately, if $\rho:\frg\to \fre_{7(-25)}$ is a tight homomorphism, we are unable to prove that $\rho$ is holomorphic, and hence we cannot exclude exotic tight embeddings, however we describe here all possible holomorphic homomorphisms as a partial step towards the full classification.
If $\rho:\frg\to \fre_{7(-25)}$ is a tight holomorphic homomorphism, then either $\frg$ is $\frso(2,p)\oplus\frsu(1,1)$ for $p$ equal to $5,7,8,9$, and the inclusion of $\frg$ in   $\fre_{7(-25)}$ is obtained by composition of the standard inclusion of $\frso(2,p)$ in $\frso(2,10)$ and the inclusion of the latter algebra as a regular subalgebra, or $\frg$ appears in the following diagram, in which, however, not all possible inclusion between classical subalgebras are displayed for readability reasons.
 \begin{center}
  \begin{tikzpicture}
   \matrix(M)[row sep={1cm,between origins}, column sep={2.7cm,between origins}]
   {&\node(0){$\fre_{7(-25)}$};\\&&\node(1){$\frso(2,10)\oplus\frsu(1,1)$};\\
   \node(2){$\frso^*(12)$};&&\node(3){$\frso(2,6)\oplus\frsu(1,1)$};\\
   \node(4){$\frsu(3,3)$};&&\node(5){$\frsu(2,2)\oplus\frsu(1,1)$};\\
   \node(6){$\frsp(6,\R)$};&\node(7){$\frsu(1,1)^3$};&\node(8){$\frsp(4,\R)\oplus\frsu(1,1)$};\\
   &\node(9){$\frsu(1,1)$};\\};
   \draw[->] (9)--(8);   \draw[->] (9)--(7);   \draw[->] (9)--(6);   \draw[->] (7)--(6);   \draw[->] (7)--(8);   \draw[->] (6)--(4);   \draw[->] (8)--(5);   \draw[->] (4)--(2);   \draw[->] (5)--(3);   \draw[->] (3)--(1);   \draw[->] (3)--(2);
   \draw[->] (2)-- (0);
   \draw[->] (1)--(0);
  \end{tikzpicture}

 \end{center}

\end{document}